\renewcommand{\1}{\mathbbm{1}}
\newcommand{\imp}{\Rightarrow}
\newcommand{\en}{\enspace}
\newcommand{\sk}[1]{\left\langle #1 \right\rangle}
\def \A {\mathcal{A}}
\def \L {\mathscr{L}}
\def \T {\mathcal{T}}
\def \TT {\mathscr{T}}
\def \S {\mathcal{S}}
\def \G {\mathcal{G}}
\def \R {\mathbb{R}}
\def \N {\mathbb{N}}
\def \C {\mathbb{C}}
\def \e {\varepsilon}
\def \ol {\overline}
\def \a {\alpha}
\def \ph {\varphi}
\def \Torus {\mathbb{T}}
\def \Rep {\operatorname{Rep}}
\def \ran {\operatorname{ran}}
\def \rg {\operatorname{rg}}
\def \lin {\operatorname{lin}}
\def \co {\operatorname{co}}
\def \Fix {\operatorname{Fix}}
\def \supp {\operatorname{supp}}
\def \Re {\operatorname{Re}}
\def \Im {\operatorname{Im}}
\theoremstyle{plain}
\newtheorem*{thm*}{Theorem}
\newtheorem{thm}{Theorem}[section]
\newtheorem{lemma}[thm]{Lemma}
\newtheorem{prop}[thm]{Proposition}
\newtheorem{cor}[thm]{Corollary}
\theoremstyle{definition}
\newtheorem{definition}[thm]{Definition}
\newtheorem{example}[thm]{Example}
\newtheorem{remark}[thm]{Remark}
\title{Topological Wiener-Wintner
  theorems for amenable operator semigroups}
\author{Marco Schreiber}
\address{Institute of Mathematics\\University of T\"ubingen\\Auf der
  Morgenstelle 10\\72076 T\"ubingen\\Germany}
\email{masc@fa.uni-tuebingen.de}
\date\today
\keywords{Markov operators, mean ergodicity, compact group extensions, uniquely ergodic}
\begin{document}		

\maketitle

\begin{abstract}
Inspired by topological Wiener-Wintner theorems we study the mean 
ergodicity of amenable semigroups of Markov operators on $C(K)$ and
show the connection to the convergence of strong and weak ergodic
nets.  
The results are then used to characterize mean ergodicity of Koopman
semigroups corresponding to skew product actions on compact group extensions.
\end{abstract}

\vspace{0.5cm}


Robinson's topological Wiener-Wintner theorem \cite[Theorem
1.1]{robinson94} is concerned with the uniform convergence of the weighted
Cesàro averages
\begin{equation}
  \label{eq:1}
\frac{1}{n}\sum_{j=1}^n \lambda^j S^j f  \tag{$\star$}
\end{equation}
for a continuous function $f\in C(K)$ on a compact space $K$, the
Koopman operator $S: f\mapsto f\circ \ph$ of a continuous 
transformation $\ph: K\to K$ and $\lambda$ in the unit circle $\Torus$. 
Subsequently, Robinson's result has been generalized in various ways
by Walters \cite{walters96}, Santos and Walkden \cite{santos07} and
Lenz \cite{lenz09a, lenz09}. 

It turned out that the uniform convergence of Wiener-Wintner
averages plays an important role in the
mathematical description of diffraction on quasicrystals. 
In \cite{lenz09} Lenz showed how the intensity of Bragg peaks can be
calculated via certain limits of Wiener-Wintner averages, giving a
partial answer to a conjecture of Bombieri and Taylor
\cite{bombieri86, bombieri87}. 

So far, all these authors focused on the convergence of a particular
sequence of Cesàro means similar to (\ref{eq:1}). 
In this paper we take a more general view and look at semigroups of
operators being mean ergodic on $C(K)$ or on some closed invariant
subspace. 
Based on the theory of mean ergodic semigroups (see
\cite[Chapter 2]{krengel85}) this allows us to unify and extend 
the known Wiener-Wintner theorems to amenable semigroups of Markov
(instead of Koopman) operators on $C(K)$.

The problem when averages of the form (\ref{eq:1}) even converge uniformly
in $\lambda\in\Torus$ has been studied independently by Assani
\cite{assani03} and Robinson \cite{robinson94} with their results
subsequently generalized by Walters \cite{walters96}, Santos and
Walkden 
\cite{santos07} and Lenz \cite{lenz09}. 
In \cite{schreiber12} we have developed the concept of a uniform
family of ergodic nets that allows us to treat this question also in
our more general setting.


 In the first part of this paper we study mean ergodicity of
 semigroups of Markov operators on $C(K)$.  
 For an amenable representation $\{S_g: g\in G\}$ of a semitopological
 semigroup $G$ as Markov operators and for $\chi:G\to \Torus$ a 
 continuous multiplicative map, we then characterize mean ergodicity of the
 semigroup $\{\chi(g)S_g: g\in G\}$.  

In the second part we restrict our attention to Koopman operators on
the space $C(K,\C^N)$ of continuous $\C^N$-valued functions and show
similar results replacing $\chi:G\to\Torus$ by a
continuous cocycle $\gamma: G\times K\to U(N)$ into the group of
unitary operators on $\C^N$. 

In the third part we consider skew product actions on compact group
extensions. 
We use the previous results in order to characterize mean ergodicity
of the corresponding Koopman representation. 
Finally, we obtain a new proof and a generalization of a theorem of
Furstenberg, showing that an ergodic skew product action corresponding
to a uniquely ergodic action is uniquely ergodic.

\section{Semigroups of Markov operators}
\label{sec:ww}
We consider the space $C(K)$ of complex valued continuous functions on a compact set
$K$, a semitopological semigroup $G$ (see Berglund et
al. \cite[Chapter 1.3]{berglund89}) and assume that $\S=\{S_g:g\in
G\}$ is a bounded representation of $G$ on $C(K)$, i.e.,
\begin{enumerate}[(i)]
\item $S_g\in \L(C(K))$ for all $g\in G$ and $\sup_{g\in G}\|S_g\|<\infty$,
\item $S_{g_1} S_{g_2}=S_{g_2 g_1}$ for all $g_1, g_2\in G$,
\item $g\mapsto S_g f$ is continuous for all $f\in C(K)$.
\end{enumerate}
Such a bounded representation $\S$ and its
convex hull $\co\S$ are topological semigroups with
respect to the strong operator topology. 

On the
dual space $C(K)'$, identified with the set $M(K)$ of regular Borel measures
on $K$, we consider the adjoint semigroup $\S':=\{S_g': g\in G\}$. 

A \emph{mean} on
the space $C_b(G)$ of bounded continuous functions on $G$ is a linear
functional $m\in C_b(G)'$ satisfying
 $\sk{m,\1}=\|m\|=1$. 
A mean $m\in C_b(G)'$ is called \emph{right (left) invariant} if
$$\sk{m,R_g f}=\sk{m,f} \quad \left(\sk{m,L_g f}=\sk{m,f}\right)
 \quad\forall g\in G, f\in C_b(G),$$ 
where 
$R_g f(h)=f(hg)$ and $L_gf(h)=f(gh)$ for $h\in G$.
A mean $m\in C_b(G)'$ is called \emph{invariant} if it is both right and left invariant.

The semigroup $G$ is called \emph{right (left) amenable} if there
exists a right (left) invariant mean on $C_b(G)$. It is called
\emph{amenable} if there exists an invariant mean on $C_b(G)$ (see Berglund et
al. \cite[Chapter 2.3]{berglund89} or the survey article of Day
\cite{day69}).
%
%
Notice that if $\S:=\{S_g:g\in G\}$ is a bounded representation of a
right (left)
amenable semigroup $G$
on $X$, then $\S$ endowed with the strong 
operator topology is also right (left) amenable.
In the following, the space $\L(C(K))$ will be endowed with
the strong operator topology unless stated otherwise.

 A net $(A_\a^\S )_{\a\in \A}$ of operators in $\L(C(K))$ is called a
\emph{strong right (left) $\S$-ergodic net}  if the following conditions hold.
  \begin{enumerate}
  \item $A_\a^\S \in \ol{\co}\S$ for all $\a\in \A$.

  \item $(A_\a^\S)$ is \emph{strongly right (left) asymptotically $\S$-invariant}, i.e.,

$\lim_\a A_\a^\S f-A_\a^\S S_g f=0\en \left(\lim_\a A_\a^\S f-
S_g A_\a^\S f=0\right)$ for all $f\in C(K)$ and  $g\in G$.
  \end{enumerate}
%
The net $(A_\a^\S )$ is called a \emph{strong $\S$-ergodic net} if it
is a strong right and left $\S$-ergodic net. 
Clearly, the Cesàro means $\frac{1}{n}\sum_{j=1}^{n}S^j$ of a contraction
$S\in \L(C(K))$ form a strong
$\{S^j:j\in\N\}$-ergodic net and we refer to
\cite{eberlein49,sato78,schreiber12} for many more examples.

 The semigroup $\S$ is called \emph{mean ergodic} if $\ol{\co}\S$
 contains a zero element $P$ (see \cite[Chapter~1.1]{berglund89}),
 which is called the \emph{mean ergodic projection of $\S$}. (See e.g.
 Krengel \cite[Chapter~2]{krengel85} for an introduction to this concept.)

Denote by $\Fix\S=\{f\in C(K): S_g f=f\;\forall g\in G\}$ and
$\Fix\S'=\{\nu\in C(K)': S_g' \nu=\nu \;\forall g\in G\}$ the fixed spaces of
$\S$ and $\S'$, respectively, and by $\lin\rg(I-\S)$ the linear span
of the set $\rg(I-\S)=\{f-S_g f: f\in C(K), g\in G\}$.

We recall some characterizations of mean ergodicity from Theorem~1.7
and Corollary~1.8 in \cite{schreiber12}.

\begin{prop}
  \label{prop:mean-ergodic}
Let $G$ be represented on $C(K)$ by a bounded (right) amenable
semigroup $\S=\{S_g:g\in G\}$. Then the following assertions are equivalent.
\begin{enumerate}
\item $\S$ is mean ergodic with mean ergodic projection $P$.
\item $\Fix\S$ separates $\Fix\S'$.
\item $C(K)= \Fix\S\oplus\ol{\lin}\rg(I-\S)$.
\item $A_\a^\S f$ converges weakly (to a fixed point of $\S$) for
  some/every strong (right) $\S$-ergodic net $(A_\a^\S)$ 
and all $f\in C(K)$.
\item $A_\a^\S f$ converges strongly (to a fixed point of $\S$) for
  some/every strong (right) $\S$-ergodic net $(A_\a^\S)$ 
and all $f\in C(K)$.
\end{enumerate}
The limit $P$ of the nets $(A_\a^\S)$ in the weak (strong, resp.) operator
topology is the mean ergodic projection of $\S$ mapping $C(K)$ onto 
$\Fix \S$ along $\ol{\lin}\rg(I-\S)$. 
\end{prop}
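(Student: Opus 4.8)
The plan is to prove the equivalence by a cycle of implications together with a couple of direct arguments, exploiting the fact that $G$ is right amenable so that strong right $\S$-ergodic nets actually exist. First I would recall from \cite{schreiber12} (or re-derive) the existence of at least one strong right $\S$-ergodic net $(A_\a^\S)$: using a right invariant mean $m$ on $C_b(G)$ one forms operators $A_m \in \ol{\co}\,\S$ by $\sk{A_m f, \mu} = \sk{m, g\mapsto \sk{S_g f,\mu}}$ for $f\in C(K)$, $\mu\in M(K)$, and a net of finite convex combinations approximating such an $A_m$ in the strong operator topology gives the required net; right invariance of $m$ yields strong right asymptotic $\S$-invariance. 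This reduces the "some/every" clauses to showing that weak (or strong) convergence of one such net forces it of all of them, which follows once we know the common limit must be the projection onto $\Fix\S$ along $\ol{\lin}\rg(I-\S)$.

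Next I would run the implications. For $(1)\Rightarrow(3)$: if $P$ is a zero element of $\ol{\co}\,\S$, then $P S_g = S_g P = P$ for all $g$, so $\ran P \subseteq \Fix\S$ and $\ran(I-P) \subseteq \ol{\lin}\rg(I-\S)$; conversely $Pf = f$ for $f\in\Fix\S$ and $Pf=0$ for $f\in\rg(I-\S)$ by continuity and the zero-element property, giving the direct sum decomposition $C(K)=\Fix\S\oplus\ol{\lin}\rg(I-\S)$ with $P$ the associated projection. For $(3)\Rightarrow(5)$: decompose an arbitrary $f$ as $f_0+f_1$ with $f_0\in\Fix\S$ and $f_1\in\ol{\lin}\rg(I-\S)$; any strong right $\S$-ergodic net fixes $f_0$ (since $A_\a^\S\in\ol{\co}\,\S$ acts as identity on $\Fix\S$) and, by strong right asymptotic $\S$-invariance together with $\sup_\a\|A_\a^\S\|<\infty$, sends each $h-S_gh$ to $0$ and hence all of $f_1$ to $0$; so $A_\a^\S f \to f_0$ strongly. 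Then $(5)\Rightarrow(4)$ is trivial, and $(4)\Rightarrow(1)$: weak convergence of $A_\a^\S f$ for all $f$ defines an operator $P\in\ol{\co}\,\S$ (the closure taken in the weak operator topology, but $\ol{\co}\,\S$ is strongly, hence weakly, closed and convex) which one checks is a zero element of $\ol{\co}\,\S$ using asymptotic invariance on both sides; here right amenability suffices to get the left-invariance of the limit as well via the standard trick of averaging again. Finally $(2)\Leftrightarrow(3)$: the inclusion $\ol{\lin}\rg(I-\S)=\,^\perp(\Fix\S')$ (annihilator relation, since $\nu\perp\rg(I-\S)$ iff $S_g'\nu=\nu$ for all $g$) shows that $C(K)=\Fix\S\oplus\ol{\lin}\rg(I-\S)$ holds iff $\Fix\S$ is a complement of $\,^\perp(\Fix\S')$, which by a Hahn--Banach separation argument is equivalent to $\Fix\S$ separating the points of $\Fix\S'$.

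The main obstacle I anticipate is the "some/every" symmetry in $(4)$ and $(5)$, i.e.\ showing that \emph{every} strong right $\S$-ergodic net converges once we merely assume convergence of \emph{some} one (or, for the reverse direction, producing the limit at all). The clean way around this is to observe that once mean ergodicity (1) is established, $\ol{\co}\,\S$ has a \emph{unique} zero element $P$, and every strong right $\S$-ergodic net must converge strongly to $P$ by the $(3)\Rightarrow(5)$ argument above applied verbatim — so the existence of one convergent net is only needed to get \emph{into} condition (1), and there the explicit construction via an invariant (or merely right invariant) mean does the job. A secondary technical point is checking that the weak limit in $(4)$ lies in $\ol{\co}\,\S$ and is genuinely a two-sided zero; this is where one uses that $G$ is right amenable to pass from right asymptotic invariance of the net to a limit operator $P$ satisfying $PS_g=P$, and then a second application of the mean (or the fact that $P^2=P$ and $\ran P=\Fix\S$) to get $S_gP=P$.
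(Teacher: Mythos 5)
Your skeleton --- the cycle $(1)\Rightarrow(3)\Rightarrow(5)\Rightarrow(4)\Rightarrow(1)$ plus $(2)\Leftrightarrow(3)$, with the ergodic net manufactured from a right invariant mean --- is the standard Eberlein--Nagel--Sato architecture (cf.\ Krengel, Ch.~2, Thm.~1.9), which is also what underlies the source the paper actually cites instead of giving a proof (Theorem~1.7 and Corollary~1.8 of \cite{schreiber12}). The steps $(1)\Rightarrow(3)$, $(3)\Rightarrow(5)$ and $(5)\Rightarrow(4)$ are correctly sketched. The genuine gap is in $(4)\Rightarrow(1)$, precisely at the point you defer. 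Strong \emph{right} asymptotic invariance of $(A_\a^\S)$ only yields $PS_g=P$ for the weak limit $P$, hence $PT=P$ for all $T\in\ol{\co}\S$, $P^2=P$ and $\ker P=\ol{\lin}\rg(I-\S)$; it does \emph{not} yield $S_gP=P$, i.e.\ $\ran P\subseteq\Fix\S$. Because the representation is an anti-homomorphism, $A_\a S_g=\sum_i c_iS_{gg_i}$ while $S_gA_\a=\sum_i c_iS_{g_ig}$, so for noncommutative $G$ the two invariance conditions are genuinely different: a right-absorbing idempotent of $\ol{\co}\S$ need not be unique and its range need not consist of fixed vectors. Your first fallback ("$P^2=P$ and $\ran P=\Fix\S$") is circular, since $\ran P=\Fix\S$ is exactly what must be shown; your second ("averaging again") is not a trick but the core of the theorem: one must use the right invariant mean $m$ to build, for $h=Ph$, the functional $\Phi_h\in C(K)''$ with $\sk{\Phi_h,\nu}=\sk{m,\,g\mapsto\sk{S_gh,\nu}}$, verify that right invariance of $m$ gives $S_g''\Phi_h=\Phi_h$, and then identify $\Phi_h$ with $h$ --- and as written your proof never establishes that the limit in (4)/(5) is a fixed point of $\S$, which is part of the assertion.

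A secondary but real issue is the construction of the strong right $\S$-ergodic net itself. The operator $A_m$ defined by $\sk{A_mf,\mu}=\sk{m,\,g\mapsto\sk{S_gf,\mu}}$ takes values a priori in $C(K)''$, so ``approximating $A_m$ in the strong operator topology by convex combinations'' presupposes $A_mf\in\ol{\co}\S f\subseteq C(K)$, which is not available at that stage. The correct route is to approximate $m$ weak-$*$ by finitely supported means, which produces a net in $\co\S$ that is only \emph{weakly} asymptotically invariant, and then to upgrade to norm asymptotic invariance via Mazur's theorem applied to the weakly null nets $A_\b(f-S_gf)$, indexed over finite collections of $f$'s and $g$'s. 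Relatedly, your $(2)\Rightarrow(3)$ is more than ``a Hahn--Banach separation argument'': one needs $\Fix\S\cap\ol{\lin}\rg(I-\S)=\{0\}$ and the closedness of the sum, both of which are obtained from the (already constructed) ergodic net and the uniform bound $\sup_\a\|A_\a^\S\|<\infty$, before Hahn--Banach can be invoked to contradict the separation hypothesis.
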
 

Let now $G$ be represented on $C(K)$ by a semigroup
$\S=\{S_g: g\in G\}$ of \emph{Markov operators}, i.e., of positive operators satisfying $S_g\1=\1$ for all $g\in G$. 
Then $\S$ consists of contractions and hence $\S$ is bounded.
Assume that the semigroup $\S$ is \emph{uniquely ergodic}, i.e.,
$\Fix\S'=\C\cdot \mu$ for some probability measure $\mu\in C(K)'$.
We denote by $S_{g,2}$ the continuous extension of the
operator $S_g\in\S$ to the space $L^2(K,\mu)$.  
The corresponding extended semigroup is $\S_2:=\{S_{g,2}:
g\in G\}$ with $\S_2^*:=\{S_{g,2}^*:g\in G\}$ the
semigroup of Hilbert space adjoints. 
The semigroup $\S$ is called \emph{ergodic with respect to $\mu$} if
$\Fix\S_2=\C\cdot\1$.
Since in $L^2(K,\mu)$ all contraction
semigroups are mean ergodic (see, e.g., \cite[Corollary 1.9]{schreiber12}),
$\S_2$ is mean ergodic. 
In fact, the above
assumptions even imply mean ergodicity on $C(K)$ (cf. Eisner, Farkas, Haase and Nagel \cite[Theorem 10.6]{efhn}
and Krengel \cite[Chapter 5, Section 5.1]{krengel85}
for representations of $\N$).

\begin{prop}
\label{prop:uniquely-ergodic-m-erg}
Let $G$ be represented on $C(K)$ by a right amenable semigroup
$\S=\{S_g:g\in G\}$ of Markov operators. Then (1) implies (2) in the following statements. 
\begin{enumerate}
\item $\S$ is uniquely ergodic.
\item $\S$ is mean ergodic and $\Fix\S=\C\cdot\1$.
\end{enumerate}
If there exists $0<\mu\in \Fix\S'$,
then (2) implies (1). 
\end{prop}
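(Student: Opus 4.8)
The plan is to prove both implications by exploiting the characterizations of mean ergodicity collected in Proposition~\ref{prop:mean-ergodic}, together with the special structure of Markov operators (positivity and $S_g\1=\1$, hence $\1\in\Fix\S$ always). For the implication (1)$\imp$(2), I would start from the criterion ``$\Fix\S$ separates $\Fix\S'$'' in Proposition~\ref{prop:mean-ergodic}(2). Since $\S$ is uniquely ergodic, $\Fix\S'=\C\cdot\mu$ is one-dimensional, so separation amounts to showing that some element of $\Fix\S$ does not annihilate $\mu$; but $\1\in\Fix\S$ and $\sk{\mu,\1}=1\neq 0$, so separation holds and $\S$ is mean ergodic. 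The mean ergodic projection $P$ then maps $C(K)$ onto $\Fix\S$ along $\ol{\lin}\rg(I-\S)$. To identify $\Fix\S$, I would use that $P$ is the strong limit of a strong right $\S$-ergodic net $(A_\a^\S)$ with $A_\a^\S\in\ol{\co}\S$; hence $P$ is itself a Markov operator (positivity and $P\1=\1$ pass to the limit) and $P'$ maps $C(K)'$ onto $\Fix\S'=\C\cdot\mu$. For $f\in\Fix\S$ we have $Pf=f$, and since $P'\delta_x=\mu$ for every point evaluation $\delta_x$ (because $P'\delta_x$ is a probability measure fixed by $\S'$, hence equal to $\mu$), we get $f(x)=\sk{\delta_x,Pf}=\sk{P'\delta_x,f}=\sk{\mu,f}$ for all $x$, i.e.\ $f$ is constant. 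Thus $\Fix\S=\C\cdot\1$.

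For the converse, assume (2) and that there exists $0<\mu\in\Fix\S'$; I may normalize $\mu$ to a probability measure. I must show $\Fix\S'=\C\cdot\mu$. Let $\nu\in\Fix\S'$ be arbitrary. By mean ergodicity, $A_\a^\S\to P$ strongly, so the adjoints satisfy $A_\a^{\S\prime}\to P'$ in the weak$^*$ operator topology; since $\nu\in\Fix\S'$ and each $A_\a^\S$ lies in $\ol{\co}\S$, asymptotic invariance gives $A_\a^{\S\prime}\nu=\nu$ for all $\a$ in the limit, so $P'\nu=\nu$. Hence $\Fix\S'=\ran P'=(\ker P)^\perp=(\ol{\lin}\rg(I-\S))^\perp$, which by (2), $\Fix\S=\C\cdot\1$, is one-dimensional (it is the annihilator of a closed hyperplane, as $C(K)=\C\cdot\1\oplus\ol{\lin}\rg(I-\S)$). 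Therefore $\Fix\S'=\C\cdot\nu_0$ for some functional $\nu_0$, and since $0<\mu$ lies in this line, $\Fix\S'=\C\cdot\mu$ with $\mu$ a probability measure, which is exactly unique ergodicity.

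The step I expect to require the most care is establishing that the mean ergodic projection $P$, or equivalently the ergodic net limit, genuinely transports $\Fix\S'$ onto $\C\cdot\mu$ and that point masses are sent to $\mu$: this uses that $P\in\ol{\co}\S$ is a Markov operator and that any $\S'$-fixed probability measure must coincide with $\mu$ under unique ergodicity. One should be slightly careful that $P\in\ol{\co}\S$ (closure in the strong operator topology) — this follows because $(A_\a^\S)$ is a net in $\ol{\co}\S$ converging strongly to $P$ — and that positivity and the identity $P\1=\1$ are preserved under strong limits, which is routine. The role of the hypothesis $0<\mu\in\Fix\S'$ in the converse is precisely to guarantee that the one-dimensional space $\Fix\S'$ is spanned by a strictly positive measure, so that ``mean ergodic with trivial fixed space'' upgrades to ``uniquely ergodic'' rather than merely ``$\Fix\S'$ one-dimensional''.
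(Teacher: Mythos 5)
Your proposal is correct and follows essentially the same route as the paper: mean ergodicity comes from $\1\in\Fix\S$ separating $\Fix\S'=\C\cdot\mu$, and both directions then rest on the duality between $\Fix\S$ and $\Fix\S'$ implemented by the mean ergodic projection $P$. Your identification $P'\delta_x=\mu$ is a concrete instance of the paper's observation that $P'$ maps $C(K)'$ into $\Fix\S'$, which (being one-dimensional) forces $\dim\Fix\S\le 1$; likewise your computation $\Fix\S'=\ran P'=(\ker P)^\perp$ amounts to the paper's remark that the one-dimensional space $\Fix\S=\C\cdot\1$ separates $\Fix\S'$.
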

\begin{proof}
(1)$\imp$(2):  Since $\Fix\S$ contains the constant functions, it separates
  $\Fix\S'$ and hence $\S$ is mean ergodic by Proposition
  \ref{prop:mean-ergodic}.
To show $\Fix\S=\C\cdot\1$ it suffices to prove that $\Fix\S'$ separates
  $\Fix\S$. To see this, take $0\neq x\in \Fix\S$ and let $P$ be the
  mean ergodic projection of $\S$. Choose $x'\in X'$ with
  $\sk{x,x'}\neq 0$. Since $Px\in\ol{\co}\S x=\{x\}$ this implies
  $\sk{x,P'x'}=\sk{Px,x'}=\sk{x,x'}\neq 0$ and $P'x'\in\Fix\S'$
  follows by taking adjoints in the equality $PS_g=P$ for all $g\in G$.

Assume now that there exists $0<\mu\in \Fix\S'$.\\
(2)$\imp$(1): If $\Fix\S=\C\cdot\1$ separates $\Fix\S'$, then $\Fix\S'$
can be at most one dimensional. But by hypothesis $\Fix\S'$ is at
least one dimensional and hence $\Fix\S'=\C\cdot\mu$.
\end{proof}

Notice that if in the situation of Proposition
\ref{prop:uniquely-ergodic-m-erg} $\S$ is also left amenable,  then Day's fixed point theorem
\cite[Chapter V, Section 2, Theorem 5]{day73} ensures the existence of
a probability measure $\mu\in \Fix\S'$. This
leads to the following corollary.

\begin{cor}
\label{cor:uniquely-ergodic-m-erg}
  Let $G$ be represented on $C(K)$ by a amenable semigroup
$\S=\{S_g:g\in G\}$ of Markov operators. Then the following assertions
are equivalent.
\begin{enumerate}
\item $\S$ is uniquely ergodic.
\item $\S$ is mean ergodic and $\Fix\S=\C\cdot\1$.
\end{enumerate}
\end{cor}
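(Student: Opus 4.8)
The plan is to read the corollary off Proposition~\ref{prop:uniquely-ergodic-m-erg}. Since an amenable semigroup is in particular right amenable, the implication (1)$\imp$(2) is already contained in that proposition and requires nothing new. For the reverse implication (2)$\imp$(1), Proposition~\ref{prop:uniquely-ergodic-m-erg} tells us that it suffices to exhibit a single measure $0<\mu\in\Fix\S'$; this is the only place where full amenability, rather than right amenability alone, is used.

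To produce such a $\mu$ I would let the adjoint semigroup $\S'=\{S_g':g\in G\}$ act on the set $M_1(K)=\{\nu\in C(K)':\nu\ge 0,\ \sk{\nu,\1}=1\}$ of probability measures. This set is convex and weak$^*$-compact, and each $S_g'$ maps it into itself: positivity of the Markov operator $S_g$ gives $S_g'\nu\ge 0$, while $S_g\1=\1$ gives $\sk{S_g'\nu,\1}=\sk{\nu,S_g\1}=1$; moreover each $S_g'$ is affine and weak$^*$-continuous on $M_1(K)$. By the remark in Section~\ref{sec:ww}, the bounded representation $\S$ of the amenable semigroup $G$ is, with the strong operator topology, again amenable; hence Day's fixed point theorem \cite[Chapter~V, Section~2, Theorem~5]{day73} applies and yields a common fixed point $\mu\in M_1(K)$ of $\S'$. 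Then $0<\mu\in\Fix\S'$, and Proposition~\ref{prop:uniquely-ergodic-m-erg} closes both implications. This is precisely the observation recorded in the remark immediately preceding the statement, so in the write-up the proof reduces to citing that remark together with Proposition~\ref{prop:uniquely-ergodic-m-erg}.

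The only point that calls for a little care is the invocation of Day's fixed point theorem: one has to check that its hypotheses --- amenability in the correct (left/right) convention dictated by the composition rule $S_{g_1}S_{g_2}=S_{g_2g_1}$, together with an affine and suitably continuous action on a compact convex set --- survive the passage from $G$ to the operator semigroup equipped with the strong operator topology, and that weak$^*$-continuity of the individual maps $S_g'$ on $M_1(K)$ is what is needed there. Once that is in place, everything else is bookkeeping around Proposition~\ref{prop:uniquely-ergodic-m-erg}, so I do not expect any genuine obstacle.
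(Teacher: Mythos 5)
Your proposal is correct and follows exactly the route the paper takes: the corollary is obtained from Proposition~\ref{prop:uniquely-ergodic-m-erg} together with the remark preceding it, where Day's fixed point theorem applied to the weak$^*$-compact convex set of probability measures (invariant under the adjoints of Markov operators) supplies the required $0<\mu\in\Fix\S'$. Your added care about the affine, weak$^*$-continuous action and the left/right conventions is exactly the bookkeeping the paper leaves implicit.
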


In the following we will always assume that $G$ is an amenable semigroup.

Let $\widehat{G}$ be the set of all \emph{characters of $G$}, i.e.,
the set of all continuous multiplicative maps 
$\chi:G\to \Torus$ (see \cite{williamson67}), and take 
$\chi\in\widehat{G}$.  
Then we consider the semigroup
$\chi \S:=\{\chi(g)S_g: g\in G\}$ and denote by
$(\chi\S)':=\{(\chi(g)S_g)': g\in G\}$ the adjoint semigroup on $C(K)'$.
Notice that $\chi\S$ is amenable as a bounded representation of the amenable semigroup $G$.

%

Again, $\chi\S$ extends to $L^2(K,\mu)$ and the extended semigroup
$\chi\S_2$ is contractive, hence mean ergodic on $L^2(K,\mu)$. But unlike $\S$, the semigroup
$\chi\S$ is not always mean ergodic on $C(K)$. In \cite[Proposition 3.1]{robinson94} Robinson
gave an elaborate example for such a situation.
%
Here is a much simpler one due to Roland Derndinger (oral communication). 
\begin{example}
\label{ex:roland}
%
Consider the set $\{-1,1\}^\N$ endowed with the product topology and
for $i\in\N$ define the sequence $x^{(i)}=(x^{(i)}_n)_{n\in\N}\in \{-1,1\}^\N$
by 
$$x^{(i)}_n:=\left\{
\begin{array}{ll}
  (-1)^n,& n<i\\
(-1)^{n+1},& n\ge i.
\end{array}
\right . $$
If $\ph$ denotes the left shift on $\{-1,1\}^\N$, i.e.,
$\ph((x_n))=(x_{n+1})$, then the set $K:=\{\pm x^{(i)}: i\in\N\}\subset \{-1,1\}^\N$ is
a closed $\ph$-invariant subset of $\{-1,1\}^\N$.
 Let $S$ be the
corresponding Koopman operator on $C(K)$, i.e., $Sf=f\circ\ph$ for
$f\in C(K)$. 
We claim that the semigroup $\S:=\{S^n: n\in \N\}$ is uniquely ergodic, but if
$\chi\in\widehat{\N}$ is given by $\chi(n)=(-1)^n$, then
$\chi\S=\{(-S)^n: n\in\N\}$ is not mean ergodic.

First, notice that $\Fix \S=\C\cdot\1$. 
Indeed, if $f\in \Fix\S$, then
for all $i\in\N$ there exists $n>i$ such that
$f(\pm x^{(i)})=S^n f(\pm x^{(i)})=f(x^{(1)}),$
and thus $f$ is constant. 
To show that $\S$ is uniquely ergodic it thus suffices by Corollary
\ref{cor:uniquely-ergodic-m-erg} to show that $\S$ is mean ergodic. 
  
Let $f\in C(K)$. Then  $\frac{1}{N}\sum_{n=0}^{N-1}S^nf$ converges
pointwise to the continuous function $\pm x^{(i)}\mapsto
\frac{1}{2}(f(x^{(1)})+f(-x^{(1)}))$ since $f(\ph^n(\pm
x^{(i)}))=f((-1)^{n+i}(\pm x^{(1)}))$ for all $n>i$. 
Since weak and pointwise convergence coincide for bounded sequences,
it follows from Proposition \ref{prop:mean-ergodic} that $\S$ is mean
ergodic. 


We now show that $\chi\S= \{(-S)^n: n\in\N\}$ is not mean ergodic. Let
$f_1\in C(\{-1,1\}^\N)$ be defined by $f_1((x_n))= x_1$ and take its
restriction $f_1$ to $K$. 
Then $\frac{1}{N}\sum_{n=0}^{N-1}(-S)^nf_1$ converges pointwise to the
function $h$ defined by $h(\pm x^{(i)})=\pm 1$ for all $i\in\N$. But $h\notin
C(K)$ since $x^{(i)}\rightarrow -x^{(1)}$ and
$h(x^{(i)})=1\nrightarrow -1=h(-x^{(1)})$. Hence the sequence
$\left(\frac{1}{N}\sum_{n=0}^{N-1}(-S)^nf_1\right)_N$ does not
converge in $C(K)$ and thus $\chi\S$ is not mean ergodic.
\end{example}

Motivated by this example and various papers in mathematical physics
on diffraction theory of quasicrystals and on the Bombieri-Taylor
conjecture (see e.g.  \cite{oliveira98, hof95, lenz09a, lenz09}), we
now characterize the mean ergodicity of the semigroup $\chi\S$.

Let us first recall some facts about the lattice structure of
$C(K)'$ (see \cite[Appendix D.2]{efhn} for details). For a bounded linear
functional $\nu\in C(K)'$ one defines a mapping $|\nu|$ by 
$$\sk{|\nu|,f}:=\sup\{|\sk{\nu, h}|: h\in C(K), |h|\le f\}$$
for $0\le f\in C(K)$ and extends it uniquely to a bounded
linear functional $|\nu|\in C(K)'$. 
With this structure the space $C(K)'$ becomes a Banach
lattice. On the other hand,
the space $M(K)$ of regular Borel measures on $K$ is a Banach
lattice with the total variation $|\nu|$ of a measure $\nu\in M(K)$ defined by
$$|\nu|(E):=\sup\left\{\sum_{j=1}^\infty|\nu(E_j)|: (E_j)_{j\in\N} \text{ a
  partition of } E\right\},\quad (E\subset K\text{ measurable}),$$
and the norm $\|\nu\|:=|\nu|(K)$.
The notation $|\nu|$ for a functional $\nu\in C(K)'$ and a measure
$\nu\in M(K)$ is justified since
the mapping 
\begin{align*}
d:M(K)&\to C(K)'\\
\nu&\mapsto d\nu
\end{align*}
in the Riesz Representation Theorem is a lattice
isomorphism. 

For a function $h\in L^2(K,\mu)$ we denote by $\ol{h} d\mu\in C(K)'$ the
functional defined by 
$$\sk{\ol{h}d\mu,f}:=\sk{f,h}_{L^2(K,\mu)}=\int_K f(x)\ol{h(x)}\,d\mu(x) \quad (f\in C(K)).$$
\begin{lemma}
  \label{lemma:Fixraum_dualer_Fixraum}
Let $G$ be represented on $C(K)$ by a 
semigroup $\S=\{S_g:g\in G\}$ of Markov operators. 
If $\S$ is uniquely
ergodic with invariant measure $\mu$, then for each
$\chi\in\widehat{G}$ the map 
\begin{align*}
 L^2(K,\mu)\supset\Fix(\chi\S_2)^*&\to\Fix (\chi\S)'\subset C(K)'\\
h\quad&\mapsto\quad \ol{h}d\mu
\end{align*}
is antilinear and bijective.
\end{lemma}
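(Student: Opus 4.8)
The plan is to check four things in turn --- antilinearity, injectivity, well-definedness, and surjectivity --- and to isolate a single identity from which the first three follow almost formally, leaving surjectivity as the only real work.

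Antilinearity is immediate from $\ol{\a h+\b k}=\ol{\a}\,\ol{h}+\ol{\b}\,\ol{k}$. For injectivity, if $\ol{h}d\mu=\ol{k}d\mu$ in $C(K)'$ then $\sk{f,h-k}_{L^2(K,\mu)}=0$ for every $f\in C(K)$, and since $C(K)$ is dense in $L^2(K,\mu)$ this forces $h=k$. The key identity I would record is: for $h\in L^2(K,\mu)$, $f\in C(K)$ and $g\in G$,
\[\sk{(\chi(g)S_g)'(\ol{h}d\mu),f}=\chi(g)\int_K(S_gf)\,\ol{h}\,d\mu=\chi(g)\sk{S_{g,2}f,h}_{L^2(K,\mu)}=\sk{f,\ol{\chi(g)}S_{g,2}^{*}h}_{L^2(K,\mu)}.\]
Comparing with $\sk{\ol{h}d\mu,f}=\sk{f,h}_{L^2(K,\mu)}$ and using density of $C(K)$ in $L^2(K,\mu)$ once more yields the equivalence
\[\ol{h}d\mu\in\Fix(\chi\S)'\iff\ol{\chi(g)}S_{g,2}^{*}h=h\ \text{ for all }g\in G\iff h\in\Fix(\chi\S_2)^{*},\]
where I use $\chi(g)\in\Torus$. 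This delivers well-definedness of the map at once, and reduces surjectivity to the assertion that every $\nu\in\Fix(\chi\S)'$ has the form $\ol{h}d\mu$ with $h\in L^2(K,\mu)$.

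For surjectivity I would take $\nu\in\Fix(\chi\S)'$ and identify it with the corresponding regular Borel measure via the lattice isomorphism $d$, so that $S_g'\nu=\ol{\chi(g)}\nu$ for all $g\in G$. Passing to total variations and using the standard estimate $|S_g'\nu|\le S_g'|\nu|$ for the adjoint of the positive operator $S_g$ (a consequence of $|S_gh|\le S_g|h|$), together with $|\ol{\chi(g)}|=1$, gives $|\nu|=|S_g'\nu|\le S_g'|\nu|$. But $S_g\1=\1$ implies $\sk{S_g'|\nu|,\1}=\sk{|\nu|,\1}$, so the two positive measures $|\nu|$ and $S_g'|\nu|$ have equal total mass and therefore coincide; hence $|\nu|\in\Fix\S'$. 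Unique ergodicity then forces $|\nu|=c\mu$ for some $c\ge0$, so $\nu\ll\mu$, and by the Radon--Nikodym theorem $\nu=\psi\,d\mu$ with $\psi\in L^1(K,\mu)$ and $|\psi|=c$ $\mu$-almost everywhere; thus $\psi\in L^{\infty}(K,\mu)\subset L^2(K,\mu)$. Setting $h:=\ol{\psi}\in L^2(K,\mu)$ gives $\ol{h}d\mu=\psi\,d\mu=\nu$, and by the key identity $h$ lies automatically in $\Fix(\chi\S_2)^{*}$.

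The main obstacle will be exactly the step $|\nu|\in\Fix\S'$: it combines the inequality $|S_g'\nu|\le S_g'|\nu|$ with the conservation of total mass under the Markov adjoint, after which unique ergodicity and Radon--Nikodym bring us back into $L^2(K,\mu)$. Everything else is bookkeeping with the pairing between $C(K)'$ and $L^2(K,\mu)$ and the density of $C(K)$ there.
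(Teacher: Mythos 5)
Your proposal is correct and follows essentially the same route as the paper: well-definedness via the adjoint/duality computation, and surjectivity by showing $|\nu|\le S_g'|\nu|$, upgrading to equality using $S_g\1=\1$, invoking unique ergodicity to get $|\nu|\in\C\cdot\mu$, and concluding with Radon--Nikod\'ym. Your packaging of the duality computation as a biconditional (so that well-definedness and the membership $h\in\Fix(\chi\S_2)^*$ at the end both follow from one identity) and the equal-total-mass argument for $|\nu|=S_g'|\nu|$ are only cosmetic variants of the paper's steps.
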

\begin{proof}
 To see that the map is well-defined, let $h\in\Fix(\chi\S_2)^*$. For all $f\in C(K)$ and $g\in G$
  we have
 \begin{align*}
  \sk{(\chi(g)S_g)'(\ol{h} d\mu),f}&=\sk{\ol{h} d\mu,\chi(g)S_g f}\\
&=\sk{\chi(g)S_{g,2} f,h}_{L^2(K,\mu)}\\
&=\sk{f,(\chi(g)S_{g,2})^*h}_{L^2(K,\mu)}\\
&=\sk{f,h}_{L^2(K,\mu)}
=\sk{\ol{h} d\mu,f},
  \end{align*}
 yielding $\ol{h}d\mu\in\Fix(\chi\S)'$. 

Since antilinearity and injectivity are clear, it remains to show surjectivity. Let
$\nu\in\Fix(\chi\S)'$. We claim that $|\nu|\le
S_g'|\nu|$ for all $g\in G$. Indeed, if $0\le f\in C(K)$ and $g\in G$,
then
\begin{align*}
  \sk{|\nu|,f}&=\sup_{|\tilde{f}|\le f}|\langle\nu,\tilde{f}\rangle|
=\sup_{|\tilde{f}|\le f}|\langle (\chi(g)S_g)'\nu, \tilde{f}\rangle|\\
&=\sup_{|\tilde{f}|\le f}|\langle\nu,\chi(g)S_g \tilde{f}\rangle|\\
&\le \sup_{|\tilde{f}|\le f}\langle|\nu|,|S_g \tilde{f}|\rangle\\
&\le \sup_{|\tilde{f}|\le f}\langle|\nu|,S_g|\tilde{f}|\rangle\\
&=\sk{|\nu|,S_g f}=\sk{S_g'|\nu|,f}.
\end{align*}
If $0\le f\in
C(K)$ and $g\in G$, then 
$$0\le\sk{S_g'|\nu|-|\nu|,f}\le
\sk{S_g'|\nu|-|\nu|,\|f\|_\infty\1}=\|f\|_\infty\sk{|\nu|,S_g\1-\1}=0.$$ 
Hence $|\nu|\in \Fix\S'=\C\cdot \mu$ by unique ergodicity and thus
$\nu$ is absolutely continuous with respect to $\mu$. 
The Radon-Nikodým Theorem~then implies
  the existence of a function $h\in L^\infty(K,\mu)$  such that 
$\nu=\ol{h} d\mu$. 
The same calculation as above shows that $h\in \Fix(\chi
  \S_2)^*$.
\end{proof}

Note that for a contraction $T$ on a Hilbert space $H$ the fixed spaces of $T$ and its adjoint $T^*$ coincide. 
Indeed, for each $x\in \Fix T$ we have
\begin{align*}
  \|T^*x-x\|^2&=\|T^*x\|^2-\sk{T^*x,x}-\sk{x,T^*x}+\|x\|^2\\
&\le \|x\|^2-\sk{x,Tx}-\sk{Tx,x}+\|x\|^2=0,
\end{align*}
which yields $\Fix T=\Fix T^*$ by symmetry.

Now, if $G$ is represented on $C(K)$ by a semigroup $\S$ of Markov operators,
then $\S_2$ consists of contractions on $L^2(K,\mu)$ and thus the fixed
spaces of $\S_2$ and $\S_2^*$ coincide. Hence, it follows from Lemma
\ref{lemma:Fixraum_dualer_Fixraum} applied to the constant character
$\1\in\widehat{G}$, that unique ergodicity of $\S$ with respect to $\mu$ implies
ergodicity of $\S$ with respect to $\mu$.

\begin{lemma}
  \label{lemma:dimension-fixraum}
Let $G$ be represented on $C(K)$ by a  
semigroup $\S=\{S_g:g\in G\}$ of Markov operators. If $\S$ is
ergodic with respect to some invariant measure $\mu$
and $\chi\in\widehat{G}$, then $\dim\Fix \chi\S_2\le 1$.
\end{lemma}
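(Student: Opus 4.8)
The plan is to prove that every nonzero $h\in\Fix\chi\S_2$ has a.e.\ constant modulus, and then to use a polarization argument to deduce that any two such fixed vectors are linearly dependent.

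First I would take $h\in\Fix\chi\S_2$ and note that for each $g\in G$ the operator $S_{g,2}$ is positive, so $|S_{g,2}h|\le S_{g,2}|h|$ pointwise; since $|\chi(g)|=1$, this gives $|h|=|\chi(g)S_{g,2}h|\le S_{g,2}|h|$ in $L^2(K,\mu)$. Integrating against $\mu$ and using that $\mu$ is $\S$-invariant, so that $\int S_{g,2}|h|\,d\mu=\int|h|\,d\mu$, the nonnegative function $S_{g,2}|h|-|h|$ has vanishing integral and hence is $0$ a.e. Thus $|h|\in\Fix\S_2$, and since $\S$ is ergodic, $\Fix\S_2=\C\cdot\1$, so $|h|$ is a nonnegative constant a.e.

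Next I would take arbitrary $h_1,h_2\in\Fix\chi\S_2$. As $\Fix\chi\S_2$ is a linear subspace, $h_1+th_2\in\Fix\chi\S_2$ for every $t\in\C$. If $h_1$ and $h_2$ were linearly independent, then $h_1+th_2\ne0$ for all $t$, so by the previous step each $|h_1+th_2|$ is a positive constant a.e.; expanding
$$|h_1+th_2|^2=|h_1|^2+|t|^2\,|h_2|^2+2\,\Re\big(\bar t\,h_1\ol{h_2}\big)$$
and using that $|h_1|$ and $|h_2|$ are constant shows that $\Re\big(\bar t\,h_1\ol{h_2}\big)$ is a.e. constant for every fixed $t$. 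Taking $t=1$ and $t=i$ forces $h_1\ol{h_2}$ to be a constant $\k\in\C$ a.e., and since $|h_2|$ is a positive constant (so $h_2\ne0$ a.e.) we get $h_1=\k\,h_2/|h_2|^2$, a scalar multiple of $h_2$ — a contradiction. Hence any two elements of $\Fix\chi\S_2$ are linearly dependent, i.e.\ $\dim\Fix\chi\S_2\le1$.

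The only step that needs a little care — rather than being a real obstacle — is passing from the modulus inequality to the a.e. equality $S_{g,2}|h|=|h|$, which relies on the $\S$-invariance of $\mu$ and on $S_{g,2}$ being an $L^2$-contraction (both available from the setup); after that, the argument in the last paragraph is an elementary computation.
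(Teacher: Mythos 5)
Your proof is correct, and it takes a genuinely different and more elementary route than the paper's. The paper invokes the Jacobs--de Leeuw--Glicksberg machinery: it passes to the weak operator closure $\TT$ of $\S_2$, extracts the minimal idempotent $Q$, shows that $\ran Q$ is a sublattice on which the operators $S_{g,2}$ act as invertible lattice (hence algebra) homomorphisms, places $\Fix\chi\S_2$ inside $\ran Q$, and then multiplies two normalized fixed functions to land in $\Fix\S_2=\C\cdot\1$. You instead obtain $|h|=|\chi(g)S_{g,2}h|\le S_{g,2}|h|$ directly from positivity, upgrade this to the a.e.\ equality $S_{g,2}|h|=|h|$ by integrating against the invariant measure (a nonnegative function with vanishing integral is zero a.e.), conclude that $|h|$ is constant by ergodicity, and then replace the paper's multiplicativity step by polarization applied to $|h_1+th_2|$ for $t=1$ and $t=i$. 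Both arguments rest on the same two facts --- every fixed function has constant modulus, and two such functions determine each other up to a scalar --- but yours bypasses the JdLG structure theory entirely, using only positivity of Markov operators, invariance of $\mu$, and ergodicity. What the paper's approach buys is the structural information that $\Fix\chi\S_2$ sits inside the reversible part $\ran Q$, on which the semigroup acts by algebra homomorphisms; this is the natural framework for generalizations such as the cocycle version later in the paper. Your approach is shorter and self-contained. Two minor points deserve an explicit word in a final write-up: the inequality $|Tf|\le T|f|$ for a positive operator $T$ on complex $L^2$ (via $|f|=\sup_\theta\Re(e^{i\theta}f)$ over a countable dense set of $\theta$), and the observation that the constant value of $|h_1+th_2|$ is strictly positive when $h_1,h_2$ are linearly independent, since otherwise $h_1+th_2=0$ in $L^2(K,\mu)$.
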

\begin{proof}
The semigroup $\S_2$ consists of contractions on $L^2(K,\mu)$ and thus
the closure $\TT$ of $\S_2$ with respect to the weak operator topology
contains a unique minimal idempotent $Q$ (cf. \cite[Theorem
16.11]{efhn}). 
By \cite[Theorem~16.22]{efhn} the minimal ideal $\G=\TT Q$ of $\TT$ is
a compact group (even for the strong operator topology) and the
map $T\mapsto T|_{\ran Q}$ from $\G$ to $\{T|_{\ran Q}: T\in\TT\}$ is
a topological isomorphism of compact groups. 
The projection $Q$ is positive since each operator in $\S_2$ is
positive.
Moreover, $Q$ is an orthogonal projection onto its range with
$Q\1=\1$.
Since $\sk{Qf,\1}_{L^2}=\sk{f,\1}_{L^2}>0$ for each $0<f\in L^2(K,\mu)$, $Q$
is strictly positive on $L^2(K,\mu)$. 
Hence $\ran Q$ is a sublattice of $L^2(K,\mu)$ by
\cite[Proposition~11.5]{schaefer74}. 
If $T_g$ denotes the restriction of $S_{g,2}$ to $\ran Q$, then $T_g$ is
invertible with positive inverse, hence $T_g$ is a 
lattice homomorphism with $T_g\1=\1$ for each $g\in G$. 
By \cite[Theorem~7.18]{efhn} each $T_g$ is then an algebra
homomorphism on the subalgebra $\ran Q\cap L^\infty(K,\mu)$. 



Now, take $\chi\in\widehat{G}$. If $f \in \Fix\chi\S_2$, then $f$ generates a one-dimensional $\S_2$-invariant subspace of $L^2(K,\mu)$ and hence by \cite[Theorem~16.29]{efhn} is contained in $\ran Q$.
Since $S_{g,2}$ is a lattice homomorphism on $\ran Q$, we have
$|f|=|\ol{\chi(g)}S_{g_2}f|=S_{g,2}|f|$ for each $g\in G$, hence by ergodicity, $|f|\in\C\cdot\1$. 
So, if $f,h\in \Fix\chi\S_2\setminus\{0\}$ we have $f,h\in\ran Q\cap L^\infty(K,\mu)$ and we may assume $|f|=|h|=\1$. 
We then obtain
$$S_{g,2}(f\cdot \ol{h})=T_g(f\cdot\ol{h})=T_gf\cdot T_g\ol{h}=\ol{\chi(g)}f\cdot
\chi(g)\ol{h}=f\cdot \ol{h}$$
for each $g\in G$. 
Hence $f\cdot\ol{h}\in\Fix\S_2$ and therefore $f\cdot \ol{h}=c\cdot\1$ for some $c\in \C$, which yields $f=c\cdot h$.
\end{proof}

The following theorem is our first main result.

\begin{thm}
  \label{thm:mean-ergodic}
Let $\S=\{S_g:g\in G\}$ be a representation of a (right) amenable
semigroup $G$ as Markov operators on $C(K)$ and assume that $\S$ is
uniquely ergodic with invariant measure $\mu$.
For $\chi\in\widehat{G}$ the following assertions are equivalent.
\begin{enumerate}[(1)]
\item $\Fix\chi\S_2\subseteq \Fix\chi\S$.
\item $\chi\S$ is mean ergodic with mean ergodic projection $P_\chi$.
\item $\Fix\chi\S$ separates $\Fix(\chi\S)'$.
\item $C(K)= \Fix\chi\S\oplus\ol{\lin}\rg(I-\chi\S)$.
\item $A_\a^{\chi\S} f$ converges weakly (to a fixed point of $\chi\S$)
  for some/every strong (right) $\chi\S$-ergodic
  net $(A_\a^{\chi\S})$ and all $f\in C(K)$.
\item $A_\a^{\chi\S} f$ converges strongly (to a fixed point of
  $\chi\S$) for some/every strong (right) $\chi\S$-ergodic net
  $(A_\a^{\chi\S})$ and all $f\in C(K)$. 
\end{enumerate}
The limit $P_\chi$ of the nets $(A_\a^{\chi\S})$ in the strong (weak, resp.)
operator topology is the mean ergodic projection of
$\chi\S$ mapping $C(K)$ onto $\Fix \chi\S$ along
$\ol{\lin}\rg(I-\chi\S)$.
\end{thm}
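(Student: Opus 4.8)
The plan is to establish the equivalences by first leveraging Proposition~\ref{prop:mean-ergodic} applied to the semigroup $\chi\S$, which is amenable as a bounded representation of the amenable semigroup $G$. Proposition~\ref{prop:mean-ergodic} already gives the equivalence of (2), (3), (4), (5) (with the statement about $P_\chi$), so the only genuinely new content is to connect these to condition~(1), $\Fix\chi\S_2\subseteq\Fix\chi\S$. I would therefore concentrate on proving (1)$\Leftrightarrow$(3), using Lemmas~\ref{lemma:Fixraum_dualer_Fixraum} and~\ref{lemma:dimension-fixraum} together with the remark (between the two lemmas) that unique ergodicity of $\S$ implies ergodicity of $\S$ with respect to $\mu$, which is exactly the hypothesis needed to invoke Lemma~\ref{lemma:dimension-fixraum}.

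The key dimension count runs as follows. By Lemma~\ref{lemma:Fixraum_dualer_Fixraum}, $h\mapsto\ol h\,d\mu$ is an antilinear bijection from $\Fix(\chi\S_2)^*$ onto $\Fix(\chi\S)'$; since $\chi\S_2$ is a contraction semigroup on the Hilbert space $L^2(K,\mu)$, the fixed spaces of $\chi\S_2$ and of $(\chi\S_2)^*=\chi^{*}\S_2^*$ coincide (by the computation displayed just after Lemma~\ref{lemma:Fixraum_dualer_Fixraum}, applied entrywise), so $\Fix(\chi\S)'\cong\Fix\chi\S_2$ and by Lemma~\ref{lemma:dimension-fixraum} this space has dimension $\le 1$. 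Now there is a natural injection $\Fix\chi\S\hookrightarrow\Fix\chi\S_2$ given by passing to the $L^2$-class (injective because $\mu$ has full support, being the unique invariant probability measure for a Markov semigroup on $C(K)$ — if $f\in C(K)$ vanishes $\mu$-a.e.\ then $f\equiv0$). Condition~(1) says precisely that this injection is surjective, i.e.\ $\dim\Fix\chi\S=\dim\Fix\chi\S_2$.

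For (1)$\Rightarrow$(3): if $\Fix\chi\S_2\subseteq\Fix\chi\S$ and $\Fix\chi\S_2=\{0\}$, then $\Fix(\chi\S)'=\{0\}$ and (3) holds vacuously; if $\dim\Fix\chi\S_2=1$, pick $0\neq h\in\Fix(\chi\S_2)^*$, so $\nu:=\ol h\,d\mu$ spans $\Fix(\chi\S)'$, and pick $0\neq f\in\Fix\chi\S\subseteq\Fix\chi\S_2$; then $\sk{f,h}_{L^2}\neq0$ would follow once we know $h$ and $f$ are ``aligned'', which is where I would use the structure from Lemma~\ref{lemma:dimension-fixraum}: on $\ran Q$ both are unimodular multiples of a common function, so their inner product is nonzero, whence $\sk{\nu,f}=\overline{\sk{f,h}_{L^2}}\neq0$ and $\Fix\chi\S$ separates $\Fix(\chi\S)'$. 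For (3)$\Rightarrow$(1): if $\Fix\chi\S$ separates $\Fix(\chi\S)'$ and the latter is nonzero, it is one-dimensional, so $\Fix\chi\S\neq\{0\}$; a nonzero $f\in\Fix\chi\S$ lies in $\Fix\chi\S_2$, which is also one-dimensional, so $\Fix\chi\S_2=\C f\subseteq\Fix\chi\S$; if instead $\Fix(\chi\S)'=\{0\}$ then $\Fix\chi\S_2=\{0\}$ and (1) is trivial. Finally I would note that the concluding statement about $P_\chi$ is immediate from the corresponding assertion in Proposition~\ref{prop:mean-ergodic}.

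The main obstacle I expect is the alignment argument in (1)$\Rightarrow$(3): showing that a continuous fixed vector $f$ of $\chi\S$ and a fixed vector $h$ of $(\chi\S_2)^*$ necessarily have nonzero $L^2$-pairing. The cleanest route is the one indicated above — both live in $\ran Q$ (by \cite[Theorem~16.29]{efhn}, since each generates a one-dimensional invariant subspace), $\ran Q$ is a finite- or infinite-dimensional sublattice on which the $S_{g,2}$ act as lattice/algebra homomorphisms, and the argument in the proof of Lemma~\ref{lemma:dimension-fixraum} shows $|f|$ and $|h|$ are both constant, so after normalization $f\cdot\ol h\in\Fix\S_2=\C\1$, giving $f=c\,h$ with $c\neq0$ and hence $\sk{f,h}_{L^2}=\bar c\,\|h\|^2\neq0$. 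Care is needed to handle $(\chi\S_2)^*$ versus $\chi\S_2$ consistently (the conjugate character appears), but the contraction-semigroup identity $\Fix T=\Fix T^*$ reduces everything to statements about $\chi\S_2$ itself.
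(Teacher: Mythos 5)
Your overall strategy coincides with the paper's: the equivalence of (2)--(6) is quoted from Proposition~\ref{prop:mean-ergodic} (applied to the amenable semigroup $\chi\S$), and the genuinely new content is (1)$\Leftrightarrow$(3), proved via Lemmas~\ref{lemma:Fixraum_dualer_Fixraum} and~\ref{lemma:dimension-fixraum}. Two remarks on the execution. First, your ``alignment'' argument for (1)$\Rightarrow$(3) is superfluous: once you know $\Fix(\chi\S_2)^*=\Fix\chi\S_2$ and assume (1), the function $h$ with $\nu=\ol{h}\,d\mu$ already has a continuous representative in $\Fix\chi\S$, and that representative itself separates, since $\sk{\nu,h}=\|h\|_{L^2(K,\mu)}^2>0$; there is no need to return to the lattice structure of $\ran Q$, nor even to the dimension bound, for this implication --- this is exactly the paper's (shorter) argument. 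Second, your justification of the injection $\Fix\chi\S\hookrightarrow\Fix\chi\S_2$ is false: the unique invariant measure of a uniquely ergodic Markov semigroup need \emph{not} have full support (take $K=[0,1]$, $\ph(x)=x/2$, $Sf=f\circ\ph$; this is uniquely ergodic with $\mu=\delta_0$). Hence a nonzero continuous $\chi\S$-fixed function could a priori have zero $L^2$-class, and ``pick $0\neq f\in\Fix\chi\S\subseteq\Fix\chi\S_2$'' is not justified as stated. The error is harmless where you use it: in (1)$\Rightarrow$(3) you may take $f$ to be the continuous representative of the spanning vector of $\Fix\chi\S_2$, whose class is nonzero by construction, and in (3)$\Rightarrow$(1) the separation hypothesis itself provides an $f\in\Fix\chi\S$ with $\sk{f,h}_{L^2}\neq 0$, hence with nonzero class. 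But the full-support claim should be removed, since as a blanket statement it is wrong.
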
 



\begin{proof}
The equivalence of the statements (2) to (6) follows directly from
Proposition~\ref{prop:mean-ergodic}.

(1)$\imp$(3): If $0\neq\nu\in \Fix(\chi\S)'$, then $\nu=\ol{h}d\mu$ by
Lemma~\ref{lemma:Fixraum_dualer_Fixraum} for some $0\neq h\in
\Fix(\chi\S_2)^*$. Since $\chi\S_2$ consists of contractions on
$L^2(K,\mu)$, we have $\Fix(\chi\S_2)^*=\Fix\chi\S_2$.
Since $\Fix\chi\S_2\subseteq \Fix\chi\S$
by (1), this yields $h\in \Fix\chi\S$ and
$$\sk{\nu,h}=\|h\|^2_{L^2(K,\mu)}>0.$$

(3)$\imp$(1): Suppose $f\in \Fix\chi\S_2\setminus \Fix\chi\S$. Then
$\dim\Fix\chi\S_2=1$ and $\dim\Fix\chi\S=0$ by
Lemma~\ref{lemma:dimension-fixraum}, while  
$\dim\Fix(\chi\S)'=1$ by Lemma~\ref{lemma:Fixraum_dualer_Fixraum}. 
Hence $\Fix\chi\S$ does not separate $\Fix(\chi\S)'$. 
\end{proof}


As Example~\ref{ex:roland} shows, mean ergodicity of
$\chi\S$ does not hold on $C(K)$ in general. 
The following theorem characterizes mean ergodicity of $\chi\S$ on the closed
invariant subspace $Y_f:=\ol{\lin}\chi\S f$ for some given $f\in
C(K)$. This extends results of Robinson \cite[Theorem~1.1]{robinson94} and
Lenz \cite[Theorem~1]{lenz09}. 

For a closed subspace $H\subset L^2(K,\mu)$ we denote by $P_H$ the
orthogonal projection onto $H$.

\begin{thm}
  \label{thm:mean-ergodic-in-f}
Let $\S=\{S_g:g\in G\}$ be a representation of a (right) amenable
semigroup $G$ as Markov operators on $C(K)$ and assume that $\S$ is
uniquely ergodic with invariant measure $\mu$.
For $\chi\in\widehat{G}$ and $f\in C(K)$ the following assertions are
equivalent.
\begin{enumerate}[(1)]
\item $P_{\Fix\chi\S_2}f\in \Fix\chi\S$.
\item $\chi\S$ is mean ergodic on $Y_f$ with mean ergodic projection $P_\chi$.
\item $\Fix\chi\S|_{Y_f}$ separates $\Fix(\chi\S)|_{Y_f}'$.
\item $f\in \Fix\chi\S\oplus\ol{\lin}\rg(I-\chi\S)$.
\item $A_\a^{\chi\S} f$ converges weakly (to a fixed point of
  $\chi\S$) for some/every strong (right) $\chi\S$-ergodic
  net $(A_\a^{\chi\S})$.
\item $A_\a^{\chi\S} f$ converges strongly (to a fixed point of
  $\chi\S$) for some/every strong (right)
  $\chi\S$-ergodic net $(A_\a^{\chi\S})$.
\end{enumerate}
The limit $P_\chi$ of $A_\a^{\chi\S}$ in the strong (weak, resp.) operator
topology on $Y_f$ is the mean ergodic projection of
$\chi\S|_{Y_f}$ mapping $Y_f$ onto $\Fix\chi\S|_{Y_f}$ along
$\ol{\lin}\rg(I-\chi\S|_{Y_f})$.
\end{thm}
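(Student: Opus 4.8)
The plan is to reduce Theorem~\ref{thm:mean-ergodic-in-f} to Theorem~\ref{thm:mean-ergodic} by working inside the closed invariant subspace $Y_f=\ol{\lin}\chi\S f$ and treating $\chi\S|_{Y_f}$ as a representation of $G$ on the Banach space $Y_f$ (which need not be of the form $C(K')$, so Proposition~\ref{prop:mean-ergodic} must be invoked in its Banach-space form rather than the $C(K)$-specific one). First I would record that $Y_f$ is $\chi\S$-invariant and that the restricted semigroup is again a bounded amenable representation of $G$, so the equivalence of (2)--(6) is immediate from Proposition~\ref{prop:mean-ergodic} applied on $Y_f$: statements (2), (3), (5), (6) here are literally statements (1), (2), (4), (5) of that proposition for $\chi\S|_{Y_f}$, and (4) here, $f\in\Fix\chi\S\oplus\ol{\lin}\rg(I-\chi\S)$, is equivalent to $Y_f=\Fix\chi\S|_{Y_f}\oplus\ol{\lin}\rg(I-\chi\S|_{Y_f})$ because $f\in Y_f$ and $Y_f$ splits accordingly whenever $f$ does (one checks that the direct-sum decomposition of the ambient space restricts to the subspace it generates). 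The last sentence of the theorem, identifying $P_\chi$ as the projection onto $\Fix\chi\S|_{Y_f}$ along $\ol{\lin}\rg(I-\chi\S|_{Y_f})$, then also comes straight from Proposition~\ref{prop:mean-ergodic}.

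The real content is the equivalence of (1) with the rest. For (1)$\imp$(4): assuming $P_{\Fix\chi\S_2}f\in\Fix\chi\S$, I would split $f$ in $L^2(K,\mu)$ as $f=P_{\Fix\chi\S_2}f+(f-P_{\Fix\chi\S_2}f)$. By Lemma~\ref{lemma:dimension-fixraum} the space $\Fix\chi\S_2$ is at most one-dimensional, so this is a very explicit decomposition; the point is to show that the second summand lies in $\ol{\lin}\rg(I-\chi\S)$ inside $C(K)$, not merely in $L^2$. Here I would use that $\chi\S_2$ is mean ergodic on $L^2(K,\mu)$ with mean ergodic projection $P_{\Fix\chi\S_2}$ (the mean ergodic projection of a contraction semigroup on Hilbert space is the orthogonal projection onto the fixed space), so $f-P_{\Fix\chi\S_2}f\in\ol{\lin}\rg(I-\chi\S_2)$ in $L^2$; then a strong $\chi\S$-ergodic net $(A_\a^{\chi\S})$ in $C(K)$, applied to $f$, converges in $L^2$ to $P_{\Fix\chi\S_2}f$, while it has a weak-$*$ (hence weak, using that norm-bounded nets in $C(K)$ have weakly convergent subnets only if\ldots) — more carefully, I would argue that $A_\a^{\chi\S}f$ is norm-bounded in $C(K)$, extract from Proposition~\ref{prop:mean-ergodic}(4) the dichotomy, and show that any weak cluster point $g\in C(K)$ of $(A_\a^{\chi\S}f)$ must be a fixed point of $\chi\S$ equal $\mu$-a.e.\ to $P_{\Fix\chi\S_2}f$, hence equal to it as a continuous function once we know $P_{\Fix\chi\S_2}f$ has a continuous representative — which is exactly hypothesis (1). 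This pins down the weak limit and gives (5), hence (2)--(6) and (4).

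For the converse, (2)$\imp$(1) (equivalently (5)$\imp$(1)): if $\chi\S$ is mean ergodic on $Y_f$ then $A_\a^{\chi\S}f$ converges in $C(K)$ to some $P_\chi f\in\Fix\chi\S\subseteq C(K)$; simultaneously the same net converges in $L^2(K,\mu)$ (by continuity of the embedding $C(K)\hookrightarrow L^2$ and boundedness) to $P_{\Fix\chi\S_2}f$, since $\chi\S_2$ is mean ergodic on $L^2$ and $(A_\a^{\chi\S})$ restricts to a strong $\chi\S_2$-ergodic net there. Matching the two limits gives $P_\chi f=P_{\Fix\chi\S_2}f$ $\mu$-a.e., and as $P_\chi f\in\Fix\chi\S\subseteq C(K)$ this yields $P_{\Fix\chi\S_2}f\in\Fix\chi\S$, which is (1). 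I expect the main obstacle to be the bookkeeping in (1)$\imp$(4): one must pass cleanly between the weak topology on $C(K)$ and the $L^2(K,\mu)$-norm topology along the ergodic net, and verify that the unique ergodicity of $\S$ (which forces $\mu$ to have full support-type separating properties, as in Lemma~\ref{lemma:Fixraum_dualer_Fixraum}) is what makes "$\mu$-a.e.\ equal to a continuous function" upgrade to "equal in $C(K)$". Everything else is a transcription of Proposition~\ref{prop:mean-ergodic} and Lemmas~\ref{lemma:Fixraum_dualer_Fixraum} and~\ref{lemma:dimension-fixraum} to the subspace $Y_f$.
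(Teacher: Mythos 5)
Your treatment of the equivalence of (2)--(6) and of the direction (2)$\imp$(1) (comparing the $C(K)$-limit of $A_\a^{\chi\S}f$ with its $L^2(K,\mu)$-limit $P_{\Fix\chi\S_2}f$ via von Neumann's theorem) is essentially the paper's argument and is fine. The genuine gap is in your proof of (1)$\imp$(4) (equivalently (1)$\imp$(5)). You propose to show that $A_\a^{\chi\S}f$ converges weakly in $C(K)$ by arguing that any weak cluster point of this bounded net must agree $\mu$-a.e.\ with $P_{\Fix\chi\S_2}f$ and hence equal its continuous representative. But boundedness in $C(K)$ gives you \emph{no} weak cluster points: $C(K)$ is not reflexive, its unit ball is not weakly compact, and the whole difficulty of the theorem is precisely that such a cluster point may fail to exist --- Example~\ref{ex:roland} exhibits a bounded sequence of ergodic averages whose pointwise limit is discontinuous, so the sequence has no weak cluster point in $C(K)$ at all. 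Your parenthetical ``norm-bounded nets in $C(K)$ have weakly convergent subnets only if\ldots'' flags exactly this obstruction but never resolves it; as written, the argument establishes nothing in the case where no cluster point exists. (The same objection applies to your alternative phrasing via the decomposition $f=P_{\Fix\chi\S_2}f+(f-P_{\Fix\chi\S_2}f)$: you get the second summand in $\ol{\lin}\rg(I-\chi\S_2)\subset L^2$, but you have no mechanism for pulling that membership back into the norm-closed subspace $\ol{\lin}\rg(I-\chi\S)$ of $C(K)$.)

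The missing idea is a duality argument that avoids weak compactness entirely. Take any $\nu\in C(K)'$ vanishing on $\Fix\chi\S\oplus\ol{\lin}\rg(I-\chi\S)$; vanishing on $\rg(I-\chi\S)$ forces $\nu\in\Fix(\chi\S)'$, so Lemma~\ref{lemma:Fixraum_dualer_Fixraum} gives $\nu=\ol{h}\,d\mu$ with $h\in\Fix(\chi\S_2)^*$. Then for a strong $\chi\S_2$-ergodic net on $L^2(K,\mu)$ one has $(A_\a^{\chi\S_2})^*h=h$, whence
$$\sk{\nu,f}=\sk{f,h}_{L^2}=\sk{A_\a^{\chi\S_2}f,h}_{L^2}\to\sk{P_{\Fix\chi\S_2}f,h}_{L^2}=\sk{\nu,P_{\Fix\chi\S_2}f}=0,$$
the last equality using hypothesis (1) (so that $P_{\Fix\chi\S_2}f\in\Fix\chi\S$, on which $\nu$ vanishes). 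Since $\Fix\chi\S\oplus\ol{\lin}\rg(I-\chi\S)$ is norm-closed (Theorem~1.9 in \cite[Chap.~2]{krengel85}), the Hahn--Banach theorem then yields $f\in\Fix\chi\S\oplus\ol{\lin}\rg(I-\chi\S)$, which is (4). Without this (or an equivalent functional-analytic substitute for the nonexistent weak compactness), your implication (1)$\imp$(4) does not go through.
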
 

\begin{proof}
  The equivalence of the statements (2) to (6) follows directly from
Proposition~1.11 in \cite{schreiber12}.

(6)$\imp$(1): 
By von Neumann's Ergodic Theorem $P_{\Fix\chi\S_2}f$
is the limit of $A_\a^{\chi\S} f$ in $L^2(K,\mu)$. 
If $A_\a^{\chi\S} f$ converges strongly in $C(K)$, then the limits
coincide almost everywhere and hence $P_{\Fix\chi\S_2}f$ has a
continuous representative in $\Fix\chi\S$.

(1)$\imp$(4): 
Let $\nu\in C(K)'$ vanish on
$\Fix\chi\S\oplus\ol{\lin}\rg(I-\chi\S)$. Then, in particular,
$\sk{\nu,h}=\sk{\nu,\chi(g)S_g h}=\sk{(\chi(g)S_g)' \nu, h}$ for all
$h\in C(K)$ and $g\in G$ and thus $\nu\in \Fix(\chi\S)'$. Hence by Lemma
\ref{lemma:Fixraum_dualer_Fixraum} there exists $h\in
\Fix(\chi\S_2)^*$ such that $\nu=\ol{h} d\mu$. Let $(A_\a^{\chi\S_2})_{\a\in\mathcal{A}}$
be a strong $\chi\S_2$-ergodic net on $L^2(K,\mu)$. Then
$(A_\a^{\chi\S_2})^* h=h$ for all $\a\in\mathcal{A}$ and von Neumann's
Ergodic Theorem implies
$$\sk{\nu,f}=\sk{f,h}_{L^2}=\sk{A_\a^{\chi\S_2}
  f,h}_{L^2}\to \langle  \underbrace{P_{\Fix\chi\S_2}f}_{\in
    C(K)},h\rangle_{L^2}= \langle\nu,\underbrace{P_{\Fix\chi\S_2}f}_{\in
    \Fix\chi\S}\rangle=0.$$
Hence the Hahn-Banach Theorem yields $f\in
\Fix\chi\S\oplus\ol{\lin}\rg(I-\chi\S)$ since
$\Fix\chi\S\oplus\ol{\lin}\rg(I-\chi\S)$ is closed by Theorem~1.9 in
Krengel \cite[Chap.~2]{krengel85}.
\end{proof}

\begin{remark}
 In \cite[Theorem~1]{lenz09} Lenz showed that for Koopman respresentations of locally compact, $\sigma$-compact abelian groups the assertion (1) of
 Theorem~\ref{thm:mean-ergodic-in-f} is equivalent to the convergence
  of $\chi\S$-ergodic nets associated to so-called \emph{van
    Hove sequences} (see
  Schlottmann \cite[p.~145]{schlottmann00}), a special class of
  \emph{F\o lner sequences} (see Paterson \cite[Chapter~4]{paterson88}).
 \end{remark}

 \begin{remark}
\label{rem:p_chi}
   Notice that if $P_{\Fix\chi\S_2}f=0$ in the situation of Theorem~\ref{thm:mean-ergodic-in-f} then $\chi\S$ is mean ergodic on $Y_f$
with mean ergodic projection $P_\chi=0$.
To see this, let $\nu\in C(K)'$ vanish on
$\ol{\lin}\rg(I-\chi\S)$.
Then the same argument as in the proof of the implication (1)$\imp$(4) in Theorem~\ref{thm:mean-ergodic-in-f} shows that $\nu$ vanishes in $f$, which
yields the claim.
 \end{remark}

We now recall the concept of a uniform family of ergodic nets from \cite{schreiber12} and apply it to operators on the Banach space $C(K)$.

\begin{definition}
\label{definition:uniformly-family}
Suppose that the semigroup $G$
is represented on $C(K)$ by bounded semigroups
$\S_i=\{S_{i,g}:g\in G\}$ for each $i$ in some index set $I$ such that
the $\S_i$ are \emph{uniformly bounded}, i.e.,  
$\sup_{i\in I}\sup_{g\in G}\|S_{i,g}\|<\infty$. 
Let $\A$ be a directed set and let $(A_\a^{\S_i})_{\a\in\A}\subset
\L(C(K))$ be a net of operators for each $i\in I$. 
Then $\{(A_\a^{\S_i})_{\a\in\A}: i\in I\}$ is a \emph{uniform family
  of right (left) ergodic nets} if
\begin{enumerate}
\item  $\forall\a\in\A, \forall \e>0, \forall f_1,\dots, f_m\in C(K), \exists
  g_1,\dots, g_n\in G$ such that for each $i\in I$ there exists a convex
  combination $\sum_{j=1}^{n}c_{i,j} S_{i,g_j}\in \co\S_i$
  satisfying
$$\sup_{i\in I}\|A_\a^{\S_i}f_k-\textstyle{\sum_{j=1}^{n}}c_{i,j}
S_{i,g_j}f_k\|_\infty<\e\quad \forall k\in\{1,\dots, m\};$$
\item $\displaystyle \lim_\a \sup_{i\in
    I}\|A_\a^{\S_i}f-A_\a^{\S_i}S_{i,g}f\|_\infty=0\en \left( \lim_\a \sup_{i\in
    I}\|A_\a^{\S_i}f-S_{i,g}A_\a^{\S_i}f\|_\infty= 0\right) \en
\forall g\in G, f\in~C(K).$
\end{enumerate}
The set $\{(A_\a^{\S_i})_{\a\in\A}: i\in I\}$ is called a \emph{uniform family
  of ergodic nets} if it is a uniform family of left and right ergodic nets.
\end{definition}
Notice that if
$\{(A_\a^{\S_i})_{\a\in\A}: i\in I\}$ is a uniform 
family of (right) ergodic nets, then each $(A_\a^{\S_i})_{\a\in\A}$ is a
strong (right) $\S_i$-ergodic net.
The simplest non-trivial example of a uniform family of ergodic nets
is the family of weighted Cesàro means
$$\left\{\left(\frac{1}{n}\sum_{j=1}^{n}(\lambda
    S)^j\right)_{n\in\N}: \lambda\in\Torus\right\}$$ 
for a contraction $S\in \L(C(K))$.
See \cite[Proposition 2.2]{schreiber12} for more examples.

We now choose a subset $\Lambda$ of characters in $\widehat{G}$ and
consider the semigroups $\chi\S$ 
for each $\chi\in\Lambda$. 
If $\{(A_\a^{\chi\S})_{\a\in\mathcal{A}}: \chi\in\Lambda\}$ is a uniform family of right
ergodic nets, $f\in C(K)$ and $\chi\S$ is right amenable and mean
ergodic on $\ol{\lin}\chi\S f$ with mean ergodic projection $P_\chi$
for each $\chi\in \Lambda$, then $A_\a^{\chi\S} f$ converges (in the 
supremum norm) to $P_\chi f$ for each 
$\chi\in\Lambda$ by Theorem~\ref{thm:mean-ergodic-in-f}. 
The next corollary gives a sufficient condition for
this convergence to be uniform in $\chi\in\Lambda$. 
It generalizes Theorem~2 of Lenz~\cite{lenz09} to right amenable
semigroups of Markov operators.

\begin{cor}
  \label{cor:uniform-convergence_C_K}
Let $G$ be represented on $C(K)$ by a  right amenable
semigroup $\S=\{S_g:g\in G\}$ of Markov operators and let $\S$ be
uniquely ergodic with invariant measure
$\mu$. Consider the semigroups $\chi\S$ for each $\chi$ in a compact
set $\Lambda\subset\widehat{G}$. If $\{(A_\a^{\chi\S})_{\a\in \A}:
\chi\in \Lambda\}$ is a uniform family of right ergodic nets and if $f\in C(K)$ satisfies
\begin{enumerate}
\item $P_{\Fix\chi\S_2}f\in C(K)$ for each $\chi\in \Lambda$,
\item the map $\Lambda \to \R_+, \chi\mapsto\|A_\a^{\chi\S}f-
  P_\chi f\|_\infty$ is continuous for each $\a\in\A$,
\end{enumerate}
then 
$$\lim_\a\sup_{\chi\in \Lambda}\|A_\a^{\chi\S}f-P_\chi f\|_\infty = 0.$$ 
\end{cor}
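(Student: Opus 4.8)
The plan is to reduce everything to a uniform-equicontinuity argument for the single net $\alpha\mapsto\sup_{\chi}\|A_\a^{\chi\S}f-P_\chi f\|_\infty$, using compactness of $\Lambda$. First observe that hypothesis (1), together with Theorem~\ref{thm:mean-ergodic-in-f}, guarantees that for each $\chi\in\Lambda$ the semigroup $\chi\S$ is mean ergodic on $Y_f=\ol{\lin}\chi\S f$ with mean ergodic projection $P_\chi$, and that $A_\a^{\chi\S}f\to P_\chi f$ in $\|\cdot\|_\infty$ as $\alpha$ runs over $\A$; in fact $P_\chi f$ is the continuous representative of $P_{\Fix\chi\S_2}f$, so $P_\chi f=\ol{(\overline{P_{\Fix\chi\S_2}f})}$ lies in $C(K)$ by (1). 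So pointwise (in $\chi$) convergence is already known; the content is the uniformity in $\chi$.

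Fix $\e>0$. The strategy is: (a) pick $\alpha_0$ so that the asymptotic invariance in part (2) of Definition~\ref{definition:uniformly-family} is already $\e$-small for a suitable finite list of test functions; (b) at that $\alpha_0$, use part (1) of Definition~\ref{definition:uniformly-family} to replace, uniformly in $\chi$, the operator $A_{\alpha_0}^{\chi\S}$ acting on $f$ by a \emph{single} convex combination $B_\chi:=\sum_{j=1}^n c_{\chi,j}S_{\chi,g_j}\in\co\,\chi\S$ with the same $g_1,\dots,g_n$ for all $\chi$, up to error $\e$ in sup-norm; (c) observe that, with the $g_j$ now fixed, the map $\chi\mapsto B_\chi f=\sum_j c_{\chi,j}\chi(g_j)S_{g_j}f$ is continuous from $\Lambda$ into $C(K)$ — here one needs the convex-combination coefficients to be chosen, or at least to be replaceable by ones, depending continuously on $\chi$, which is exactly where the compactness/uniform-family machinery from \cite{schreiber12} does its work, and where hypothesis (2) on the continuity of $\chi\mapsto\|A_\a^{\chi\S}f-P_\chi f\|_\infty$ is used to patch the argument. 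Combining (a)–(c), the function $\chi\mapsto\|A_{\alpha_0}^{\chi\S}f-P_\chi f\|_\infty$ is continuous on the compact set $\Lambda$ (this is hypothesis (2)), hence attains its supremum; and we must show this supremum tends to $0$ along $\A$.

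To get the actual convergence, I would argue as follows. For each $\chi$, since $(A_\a^{\chi\S})$ is a strong $\chi\S$-ergodic net and $\chi\S$ is mean ergodic on $Y_f$, we have $\|A_\a^{\chi\S}f-P_\chi f\|_\infty\to0$. Suppose, for contradiction, that $\sup_{\chi\in\Lambda}\|A_\a^{\chi\S}f-P_\chi f\|_\infty\not\to0$; then there is $\e>0$ and a subnet together with $\chi_\a\in\Lambda$ with $\|A_{\a}^{\chi_\a\S}f-P_{\chi_\a}f\|_\infty\ge\e$ along that subnet. By compactness of $\Lambda$ pass to a further subnet with $\chi_\a\to\chi_\infty$. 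Now use the uniform family property (1) at a \emph{fixed} index $\a$: the displayed estimate $\sup_{i}\|A_\a^{\S_i}f-\sum_j c_{i,j}S_{i,g_j}f\|_\infty<\e'$ holds with one choice of $g_1,\dots,g_n$ valid for all $\chi\in\Lambda$ simultaneously, and in particular for $\chi_\a$ and $\chi_\infty$; combined with part (2) of the definition and the continuity of $\chi\mapsto\chi(g_j)$ this forces $\|A_\a^{\chi_\a\S}f-A_\a^{\chi_\infty\S}f\|_\infty$ to be controlled, so that along the subnet the quantity $\|A_\a^{\chi_\a\S}f-P_{\chi_\a}f\|_\infty$ is within $o(1)$ of $\|A_\a^{\chi_\infty\S}f-P_{\chi_\infty}f\|_\infty$, which $\to0$ since $\chi_\infty$ is a single fixed character — contradiction. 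Hypothesis (2) is what legitimately lets us pass between $\chi_\a$ and its limit $\chi_\infty$ at the level of the error norm.

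The main obstacle I expect is step (c)/the interchange of the limit in $\a$ with the limit in $\chi$: one must be careful that the convex combinations supplied by Definition~\ref{definition:uniformly-family}(1) can be taken continuously (or that their $f$-images can be) in $\chi$ on the compact set $\Lambda$, and that the von Neumann limit $P_{\Fix\chi\S_2}f$, whose continuous representative is $P_\chi f$, varies continuously in $\chi$ — the latter is not automatic and is precisely what hypotheses (1) and (2) are imposed to supply. Once continuity of $\chi\mapsto A_\a^{\chi\S}f$ (from (1)+(2) of the definition) and of $\chi\mapsto\|A_\a^{\chi\S}f-P_\chi f\|_\infty$ (hypothesis (2)) are in hand, the compactness argument above closes cleanly; everything else is the bookkeeping already packaged in Theorem~\ref{thm:mean-ergodic-in-f} and \cite[Proposition~2.2 and Theorem~2.6]{schreiber12}.
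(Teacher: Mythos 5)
Your reduction to Theorem~\ref{thm:mean-ergodic-in-f} (mean ergodicity of $\chi\S$ on $Y_f$ for each fixed $\chi$, hence $A_\a^{\chi\S}f\to P_\chi f$ pointwise in $\chi$) matches the paper, which then simply cites Theorem~2.4 of \cite{schreiber12} for the passage from pointwise to uniform convergence. Your attempt to supply that passage yourself, however, has a genuine gap. The subnet/contradiction argument needs the diagonal estimates $\|A_\a^{\chi_\a\S}f-A_\a^{\chi_\infty\S}f\|_\infty\to 0$ and $\|P_{\chi_\a}f-P_{\chi_\infty}f\|_\infty\to 0$, and neither is available. Part (1) of Definition~\ref{definition:uniformly-family} produces, for each fixed $\a$, convex combinations whose coefficients $c_{\chi,j}$ (and whose elements $g_j$, and even their number $n$) are not claimed to depend continuously on $\chi$, and the whole approximating family changes with $\a$; so even after extracting a further subnet along which the coefficients converge, the limiting convex combination is not the one that approximates $A_\a^{\chi_\infty\S}f$, and nothing forces $A_\a^{\chi_\a\S}f$ to be close to $A_\a^{\chi_\infty\S}f$ uniformly in $\a$. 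Likewise hypothesis (2) gives continuity of the scalar function $h_\a:\chi\mapsto\|A_\a^{\chi\S}f-P_\chi f\|_\infty$ for each \emph{fixed} $\a$; it does not let you replace $h_\a(\chi_\a)$ by $h_\a(\chi_\infty)+o(1)$ along a net in which $\a$ moves, because that is an equicontinuity statement. If your argument closed as written, it would prove that every net of continuous functions on a compact set converging pointwise to $0$ converges uniformly, which is false; the uniform-family structure must enter essentially, not merely as a ``patch''.

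The argument that does work (and is what the cited Theorem~2.4 of \cite{schreiber12} packages) runs in the other direction: fix $\e>0$; by pointwise convergence and continuity of $h_\a$ choose for each $\chi_0$ an index $\a_{\chi_0}$ and a neighbourhood $U_{\chi_0}$ on which $h_{\a_{\chi_0}}<\e$; extract a finite subcover $U_{\chi_1},\dots,U_{\chi_m}$ of $\Lambda$; and transfer from the finitely many indices $\a_{\chi_k}$ to all large $\a$ by writing, for $\chi\in U_{\chi_k}$,
$$\|A_\a^{\chi\S}f-P_\chi f\|_\infty\le\|A_\a^{\chi\S}f-A_\a^{\chi\S}A_{\a_{\chi_k}}^{\chi\S}f\|_\infty+\bigl\|A_\a^{\chi\S}\bigl(A_{\a_{\chi_k}}^{\chi\S}f-P_\chi f\bigr)\bigr\|_\infty.$$
Here one uses $A_\a^{\chi\S}P_\chi f=P_\chi f$ (since $P_\chi f\in\Fix\chi\S|_{Y_f}$) and uniform boundedness for the second term, and for the first term one approximates $A_{\a_{\chi_k}}^{\chi\S}f$ by $\sum_j c_{\chi,j}\chi(g_j)S_{g_j}f$ via part (1) of Definition~\ref{definition:uniformly-family} and then invokes the uniform right asymptotic invariance of part (2), namely $\lim_\a\sup_{\chi\in\Lambda}\|A_\a^{\chi\S}f-A_\a^{\chi\S}\chi(g_j)S_{g_j}f\|_\infty=0$ for the finitely many $g_j$. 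Both halves of the uniform-family definition are genuinely used, and no continuity of $\chi\mapsto P_\chi f$ or of the coefficients is required.
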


\begin{proof}
 By Theorem \ref{thm:mean-ergodic-in-f} and our
 hypotheses, the semigroup $\chi\S$ is mean ergodic on $\ol{\lin}\chi\S
 f$ for all $\chi\in\Lambda$. The result then follows directly from
 Theorem~2.4 in \cite{schreiber12}. 
\end{proof}

The following corollary is a direct consequence. It generalizes
Theorem~2.10 of Assani~\cite{assani03}, who considered Koopman
representations of the semigroup
$(\N,+)$ and the F\o lner sequence given by $F_n=\{0,1,\dots,n-1\}$.
\begin{cor}
\label{cor:assani}
  Let $H$ be a locally compact group with left Haar measure $|\cdot|$ and
  suppose that $G\subset H$ is a subsemigroup such that there exists a F\o
  lner net $(F_\a)_{\a\in\A}$ in $G$. Let $G$ be represented on $C(K)$
  by  a  semigroup $\S=\{S_g: g\in G\}$ of Markov operators and
  assume that $\S$ is uniquely ergodic with invariant
  measure $\mu$. If $f\in C(K)$ satisfies $P_{\Fix\chi\S_2}f=0$ for
  all $\chi$ in a compact set $\Lambda\subset\widehat{G}$, then 
$$\lim_{\a}\sup_{\chi\in\Lambda}\left\|\frac{1}{|F_\a|}\int_{F_\a}\chi(g)S_g
  f \;dg\right\|_\infty=0.$$
\end{cor}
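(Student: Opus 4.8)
The idea is to recognize Corollary~\ref{cor:assani} as a special case of Corollary~\ref{cor:uniform-convergence_C_K}, obtained by choosing the ergodic nets to be the F\o lner averages. So first I would set $A_\a^{\chi\S} := \frac{1}{|F_\a|}\int_{F_\a}\chi(g)S_g\,dg$ and check that $\{(A_\a^{\chi\S})_{\a\in\A}:\chi\in\Lambda\}$ is indeed a uniform family of right ergodic nets in the sense of Definition~\ref{definition:uniformly-family}. The key point is that the operators $A_\a^{\chi\S}$ are built from the \emph{same} F\o lner net $(F_\a)$ for every $\chi$, only the unimodular scalar weight $\chi(g)$ changing with $\chi$; this is exactly what makes the family ``uniform''. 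Condition~(1) of the definition holds because $\frac{1}{|F_\a|}\int_{F_\a}\chi(g)S_g\,dg$ is itself (approximable by) a convex combination $\sum_j c_{\a,j}\chi(g_j)S_{g_j}$ of the $\chi(g_j)S_{g_j}$ with the \emph{same} points $g_j$ and coefficients $c_{\a,j}=|F_\a\cap U_j|/|F_\a|$ independent of $\chi$ (obtained by partitioning $F_\a$ into small pieces on which $g\mapsto S_g f$ and $g\mapsto\chi(g)$ vary little, using continuity of $g\mapsto S_g f$ and of $\chi$); the approximation is uniform in $\chi\in\Lambda$ because $\Lambda$ is compact. Condition~(2), strong asymptotic invariance, is the standard computation $\|A_\a^{\chi\S}f-A_\a^{\chi\S}\chi(g)S_g f\|_\infty \le \frac{|F_\a\,\triangle\,F_\a g|}{|F_\a|}\,\|f\|_\infty\to 0$, uniformly in $\chi\in\Lambda$ since the bound does not involve $\chi$ at all (the weights $|\chi|=1$ telescope away).

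Next I would verify the two hypotheses on $f$ in Corollary~\ref{cor:uniform-convergence_C_K}. Hypothesis~(1) there, $P_{\Fix\chi\S_2}f\in C(K)$, is immediate: by assumption $P_{\Fix\chi\S_2}f=0$ for all $\chi\in\Lambda$, and $0\in C(K)$. Moreover, by Remark~\ref{rem:p_chi}, $P_{\Fix\chi\S_2}f=0$ forces $\chi\S$ to be mean ergodic on $Y_f=\ol{\lin}\chi\S f$ with mean ergodic projection $P_\chi=0$, so that in fact $P_\chi f=0$ for every $\chi\in\Lambda$. Hypothesis~(2), continuity of $\chi\mapsto\|A_\a^{\chi\S}f-P_\chi f\|_\infty=\|A_\a^{\chi\S}f\|_\infty$ for each fixed $\a$, follows because $\chi\mapsto A_\a^{\chi\S}f=\frac{1}{|F_\a|}\int_{F_\a}\chi(g)S_g f\,dg$ is continuous from $\Lambda$ (with the topology of pointwise, equivalently uniform on compacts, convergence of characters) into $(C(K),\|\cdot\|_\infty)$: the integrand $\chi(g)S_g f(x)$ is jointly continuous and dominated, so one gets continuity of the integral by dominated convergence, and then $\chi\mapsto\|A_\a^{\chi\S}f\|_\infty$ is continuous as the composition with the norm.

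With both the ``uniform family'' property and the two hypotheses in hand, Corollary~\ref{cor:uniform-convergence_C_K} applies verbatim and gives $\lim_\a\sup_{\chi\in\Lambda}\|A_\a^{\chi\S}f-P_\chi f\|_\infty=0$; since $P_\chi f=0$ this is exactly the claimed $\lim_\a\sup_{\chi\in\Lambda}\bigl\|\frac{1}{|F_\a|}\int_{F_\a}\chi(g)S_g f\,dg\bigr\|_\infty=0$.

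**Main obstacle.** The only genuinely delicate point is condition~(1) of Definition~\ref{definition:uniformly-family}: showing that a \emph{single} finite set $g_1,\dots,g_n$ and a \emph{single} choice of convex coefficients work simultaneously for all $\chi\in\Lambda$, up to $\e$ in supremum norm. This is where compactness of $\Lambda$ is essential --- one needs the F\o lner average $\frac{1}{|F_\a|}\int_{F_\a}\chi(g)S_g f\,dg$ to be approximable by a Riemann-type sum $\sum_j c_j \chi(g_j)S_{g_j}f$ with an error controlled \emph{uniformly} over $\chi$, which follows from equicontinuity of the family $\{g\mapsto\chi(g):\chi\in\Lambda\}$ on the compact set $\overline{F_\a}$ (guaranteed by Arzel\`a--Ascoli / compactness of $\Lambda$) together with continuity of $g\mapsto S_g f$. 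Once that uniform discretization is set up, everything else is routine and the result drops out of the machinery already developed in the paper.
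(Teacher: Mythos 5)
Your proposal is correct and follows essentially the same route as the paper: the paper also reduces the statement to Corollary~\ref{cor:uniform-convergence_C_K} via Remark~\ref{rem:p_chi} (giving $P_\chi f=0$) and the continuity of $\chi\mapsto A_\a^{\chi\S}f$, the only difference being that it cites \cite[Proposition~2.2~(f)]{schreiber12} for the uniform-family property where you verify it by hand with the Riemann-sum/equicontinuity argument (which is exactly the argument of Lemma~\ref{lemma:uniform-family} in the cocycle setting). The only cosmetic slip is writing $F_\a\triangle F_\a g$ where the computation with $S_hS_g=S_{gh}$ and a left Haar measure produces $F_\a\triangle gF_\a$; this does not affect the argument.
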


\begin{proof}
  If $(F_\a)$ is a F\o lner net in $G$, then $G$ and consequently $\S$
  is right amenable. 
Since $\Lambda\subset \widehat{G}$ is compact, it
  follows from \cite[Proposition 2.2 (f)]{schreiber12} that
$$\left\{\left(\frac{1}{|F_\a|}\int_{F_\a}\chi(g)S_g
  \;dg\right)_{\a\in\A}: \chi\in\Lambda\right\}$$
is a uniform family of right ergodic nets. 
If $P_{\Fix\chi\S_2}f=0$ for all
$\chi\in\Lambda$, then by Remark~\ref{rem:p_chi} the conditions (1) and (2) of Corollary
\ref{cor:uniform-convergence_C_K} are satisfied since the map
$\chi\mapsto \frac{1}{|F_\a|}\int_{F_\a}\chi(g)S_g
  f dg$ is continuous. Hence 
$$\lim_{\a}\sup_{\chi\in\Lambda}\left\|\frac{1}{|F_\a|}\int_{F_\a}\chi(g)S_g
  f \;dg\right\|_\infty=0.$$
\end{proof}

\section{Semigroups of Koopman operators}
\label{sec:koopman}

In this section we consider semigroups of Koopman
operators on the space $C(K,\C^N)$ of continuous $\C^N$-valued
functions on a compact space $K$ for some $N\in\N$. 
The space $\C^N$ will be endowed with the Euclidean norm $x\mapsto\|x\|_2=\sqrt{\sk{x,x}_2}$
and the space $C(K,\C^N)$ with the norm
$f\mapsto\|f\|=\sup_{x\in K}\|f(x)\|_2$.
We identify $C(K,\C^N)$ with $C(K)^N$ and write
$f=(f_1,\dots,f_N)\in C(K,\C^N)$ with coordinate functions $f_i\in
C(K)$.
As before, $G$ is a semitopological semigroup.

\begin{definition}
 A \emph{semigroup action of $G$ on $K$} is a continuous map
  $$G\times K\to K,\en (g,x)\mapsto gx$$ 
satisfying
$$(g_1 g_2)x=g_1(g_2x)\text{ for all } g_1,g_2\in G \text{ and } x\in K.$$
In this case we say that \emph{$G$ acts on $K$}.
\end{definition}

Let $G$ be a semitopological semigroup acting on $K$ and let $\S:=\{S_g: g\in G\}$ be the corresponding 
\emph{Koopman representation} on
$C(K,\C^N)$, i.e., $S_g f(x)=f(gx)$ for  $f\in C(K,\C^N)$, $g\in G$ and $x\in
K$. 
To emphasize the dependence on $N$ we sometimes write $\S^{(N)}$ for
the semigroup $\S$ on $C(K,\C^N)$.



We say that a measure $\mu$ on $K$ is \emph{$G$-invariant} if
$\mu(A)=\mu(g^{-1}A)$ for all Borel sets $A\subseteq K$, where
$g^{-1}A=\{x\in K: gx\in A\}$. Notice that this is equivalent to
$\mu\in{\Fix\S^{(1)}}'$. 
If $\mu$ is a $G$-invariant measure, we denote by
$\S_2:=\S_2^{(N)}:=\{S_{g,2}: g\in G\}$ the extension of the semigroup 
$\S^{(N)}$ to $L^2(K,\C^N,\mu)$.
The action of $G$ on $K$ is called \emph{ergodic with respect to $\mu$} if there is no
non-trivial measurable $G$-invariant set, or equivalently if $\Fix\S_2^{(1)}=\C\cdot
\1\subseteq L^2(K,\mu)$.
The action of $G$ on $K$ is called \emph{uniquely ergodic} if there
exists a unique $G$-invariant probability measure $\mu$ on
$K$. Notice that this is equivalent to $\Fix{\S^{(1)}}'=\C\cdot\mu$ for some probability measure $\mu\in C(K)'$.

\begin{definition}
Let $\Omega$ be a topological group. A continuous map $\gamma:G\times
  K\to\Omega$ is called a \emph{continuous cocycle} if it satisfies the
\emph{cocycle equation}
$$\gamma(g_1g_2,x)=\gamma(g_2,x)\gamma(g_1,g_2x)\quad\forall
g_1,g_2\in G, x\in K.$$
The set of continuous cocycles is denoted by $\Gamma(G\times K,
\Omega)$. 
If $\Omega$ is a compact metric group with metric $d$, then we endow
$\Gamma(G\times K,\Omega)$ with the metric
$$\tilde{d}(\gamma_1,\gamma_2):=\sup_{(g,x)\in G\times
  K}d\left(\gamma_1(g,x),\gamma_2(g,x)\right)\quad (\gamma_1,\gamma_2\in \Gamma(G\times K,\Omega)).$$
\end{definition}
Denote by $U(N)$ the group of unitary operators on $\C^N$ and take
 a continuous cocycle $\gamma\in\Gamma(G\times K,U(N))$. 
Motivated by papers of Walters~\cite{walters96} and Santos and
Walkden~\cite{santos07}
we study the mean ergodicity of the semigroup
$\gamma\S:=\{\gamma(g,\cdot)S_g: g\in G\}$ on $C(K,\C^N)$, where
$\gamma(g,\cdot)S_g f(x)=\gamma(g,x)S_gf(x)$ for $f\in C(K,\C^N)$ and
$x\in K$. 

In order to proceed as in the previous section we need some facts
about vector valued measures (see Diestel and Uhl~\cite{diestel77}). 
Denote by $M(K,\C^N)$ the set of $\sigma$-additive
functions $\nu: \Sigma\to \C^N$ defined on the Borel $\sigma$-algebra
$\Sigma$ of $K$. 
We define the \emph{total variation} $|\nu|_2:\Sigma\to
[0,\infty]$ of a measure $\nu\in M(K,\C^N)$ by 
$$|\nu|_2(E):=\sup\left\{\sum_{j=1}^\infty \|\nu(E_j)\|_2 :
   E=\bigsqcup_{j\in\N}E_j\right\},\quad (E\in\Sigma),$$
where $E=\bigsqcup_{j\in\N} E_j$ means that the family $(E_j)_{j\in\N}\subset \Sigma$ is a partition of $E$.

The main property of the total variation of a measure $\nu\in M(K,\C^N)$ is the fact that $|\nu|_2:\Sigma\to\R_+$ is a finite positive measure on $K$, which can be deduced from Theorem~6.2 and Theorem~6.4 in Rudin~\cite{rudin87}.



Identifying $M(K,\C^N)$ with $M(K)^N$, we take $\nu=(\nu_1,\dots,\nu_N)\in M(K,\C^N)$. If $f=(f_1,\dots,f_N)\in C(K,\C^N)$, then the
map
$$\tilde{d}\nu: f \mapsto \sum_{i=1}^N \int_K f_i d\nu_i$$
defines a linear functional on $C(K,\C^N)$. For $f\in
C(K,\C^N)$ we have
\begin{align*}
\left | \sk{\tilde{d}\nu,f}\right|&=\left|\sum_{i=1}^N \int_K f_i d\nu_i\right|\le \sum_{i=1}^N \int_K |f_i| d|\nu_i|\\
&\le \sum_{i=1}^N \sup_{x\in K}|f_i(x)||\nu_i|(K)\le
\max_{i=1,\dots,N}\sup_{x\in K}|f_i(x)|\sum_{i=1}^N |\nu_i|(K)\\
&\le\sup_{x\in K}\left(\sum_{i=1}^N|f_i(x)|^2\right)^{\frac{1}{2}}\sum_{i=1}^N
|\nu_i|(K)=\|f\| \sum_{i=1}^N |\nu_i|(K)
\end{align*}
and hence $\tilde{d}\nu$ is bounded with $\|\tilde{d}\nu\|\le \sum_{i=1}^N |\nu_i|(K)$.

The next result follows from the Riesz Representation Theorem
 and is in fact equivalent to it (cf. \cite[Chapter~VI, Section~7, Theorem~3]{dunford-schwartz58}).
\begin{thm}
\label{thm:riesz}
  The map 
  \begin{align*}
    \tilde{d}: M(K,\C^N)&\to C(K,\C^N)'\\
\nu&\mapsto \tilde{d}\nu
  \end{align*}
is linear and bijective.
\end{thm}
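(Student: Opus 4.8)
The statement to prove is that the map $\tilde d : M(K,\C^N) \to C(K,\C^N)'$ sending $\nu = (\nu_1,\dots,\nu_N)$ to the functional $f \mapsto \sum_{i=1}^N \int_K f_i\, d\nu_i$ is linear and bijective. Linearity is immediate from the definition (componentwise, the integral is linear in the measure), and boundedness has already been verified in the excerpt, so the content is really the bijectivity, which I would reduce entirely to the scalar Riesz Representation Theorem applied $N$ times, exploiting the identification $C(K,\C^N) \cong C(K)^N$ and $M(K,\C^N) \cong M(K)^N$.

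**Key steps.** First I would observe that under the identification $C(K,\C^N) = C(K)^N$, the dual space decomposes as $C(K,\C^N)' \cong (C(K)')^N$ via $\Phi \mapsto (\Phi_1,\dots,\Phi_N)$, where $\Phi_i \in C(K)'$ is defined by $\langle \Phi_i, h\rangle := \langle \Phi, (0,\dots,0,h,0,\dots,0)\rangle$ with $h$ in the $i$-th slot; conversely a tuple $(\Phi_1,\dots,\Phi_N)$ gives the functional $f \mapsto \sum_i \langle \Phi_i, f_i\rangle$, and these two operations are mutually inverse and norm-equivalent. Under this decomposition, $\tilde d$ becomes simply the product map $(\nu_1,\dots,\nu_N) \mapsto (d\nu_1,\dots,d\nu_N)$, where $d : M(K) \to C(K)'$ is the scalar Riesz map, which is a bijection (indeed a lattice isomorphism, as recalled earlier in the excerpt). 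A finite product of bijections is a bijection, so $\tilde d$ is bijective. For injectivity one can also argue directly: if $\tilde d\nu = 0$, then testing against $f = (0,\dots,h,\dots,0)$ gives $\int_K h\, d\nu_i = 0$ for every $h \in C(K)$ and every $i$, whence $\nu_i = 0$ by the uniqueness part of the scalar Riesz theorem. For surjectivity, given $\Phi \in C(K,\C^N)'$, form the components $\Phi_i$ as above, represent each as $\Phi_i = d\nu_i$ for some $\nu_i \in M(K)$ by scalar Riesz, set $\nu = (\nu_1,\dots,\nu_N)$, and check $\tilde d\nu = \Phi$ by evaluating both sides on an arbitrary $f = \sum_i (0,\dots,f_i,\dots,0)$ and using linearity.

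**Main obstacle.** There is essentially no hard step; the only thing requiring a little care is making the identification $C(K,\C^N)' \cong (C(K)')^N$ precise and checking it is compatible with the $\tilde d$ construction — i.e. that the functional $\tilde d\nu$ really does have $i$-th component equal to $d\nu_i$. This is a routine unravelling of definitions, and one should simply note that the coordinate embeddings $h \mapsto (0,\dots,h,\dots,0)$ and projections $f \mapsto f_i$ are bounded linear maps between $C(K)$ and $C(K,\C^N)$, so everything passes cleanly to duals. Thus the proof is short: reduce to the scalar case componentwise and invoke the classical Riesz Representation Theorem, exactly as the parenthetical reference to Dunford--Schwartz suggests.
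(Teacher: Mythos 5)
Your proposal is correct and follows essentially the same route as the paper: surjectivity is obtained by forming the component functionals $\xi_i(h):=\xi(h\otimes e_i)$, applying the scalar Riesz Representation Theorem to each, and verifying $\tilde d\nu=\xi$ by linearity, with injectivity handled by the same coordinate testing. No gaps.
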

\begin{proof}
 The only non-trivial statement is the surjectivity. So take $\xi\in
 C(K,\C^N)'$, $\{e_1,\dots,e_N\}$ the canonical basis of $\C^N$
 and define $\xi_i\in C(K)'$ for each $i\in \{1,\dots,N\}$ by $\xi_i(f):=\xi(f\otimes
 e_i)$, where $f\otimes e_i\in C(K,\C^N)$ is the function $x\mapsto f(x)e_i$.
By the Riesz Representation Theorem for each $i\in\{1,\dots,N\}$ there
 exists $\nu_i\in M(K)$ with $\xi_i=d\nu_i$. If we define
 $\nu:=(\nu_1,\dots,\nu_N)\in M(K,\C^N)$, then for each $f=(f_1,\dots,f_N)\in C(K,\C^N)$ we obtain
 $$\sk{\tilde{d}\nu,f}= \sum_{i=1}^N \int_K f_i
 d\nu_i=\sum_{i=1}^N \sk{\xi_i,f_i} =\sk{\xi,\sum_{i=1}^N f_i\otimes
   e_i}=\sk{\xi,f}$$
and hence $\tilde{d}\nu=\xi$.
\end{proof}

For a bounded linear functional $\nu\in C(K,\C^N)'$ we define the
functional $|\nu|_2$ by
$$\sk{|\nu|_2,f}:=\sup\left\{|\sk{\nu,h}|: h\in C(K,\C^N),
  \|h(\cdot)\|_2\le f\right\}$$
for $0\le f\in C(K)$. 
It is clear that $\|\nu\|=\sk{|\nu|_2,\1}$.

\begin{prop}
  There exists a unique bounded and linear extension of $|\nu|_2$ to $C(K)$ with $\||\nu|_2\|=\|\nu\|$.
\end{prop}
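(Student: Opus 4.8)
The plan is to reduce everything to the observation that the set function $f\mapsto\sk{|\nu|_2,f}$, so far defined only on the positive cone $C(K)_+=\{0\le f\in C(K)\}$, is \emph{additive} and \emph{positively homogeneous} there and takes values in $[0,\|\nu\|\,\|f\|_\infty]$ (since any admissible $h$ has $\|h\|_\infty=\sup_{x}\|h(x)\|_2\le\|f\|_\infty$). Once this is established, the extension of such a functional off the cone is routine, and combined with the remark $\sk{|\nu|_2,\1}=\|\nu\|$ already noted above it yields the norm identity.

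Positive homogeneity is immediate (for $\l>0$ the substitution $h\mapsto\l h$ is a bijection between the competing sets, and $\l=0$ forces $h=0$). For additivity, fix $0\le f_1,f_2\in C(K)$. The inequality ``$\ge$'' is the easy half: given $h_i\in C(K,\C^N)$ with $\|h_i(\cdot)\|_2\le f_i$, choose $\a_i\in\Torus$ with $\a_i\sk{\nu,h_i}=|\sk{\nu,h_i}|$ and use $h=\a_1h_1+\a_2h_2$, which is admissible for $f_1+f_2$ and satisfies $\sk{\nu,h}=|\sk{\nu,h_1}|+|\sk{\nu,h_2}|$. For ``$\le$'' one must, given $h$ admissible for $f_1+f_2$, produce a \emph{continuous} splitting $h=h_1+h_2$ with $\|h_i(\cdot)\|_2\le f_i$; the natural choice $h_i(x)=\tfrac{f_i(x)}{f_1(x)+f_2(x)}\,h(x)$ (and $h_i(x)=0$ where $f_1+f_2$ vanishes) does the job, and verifying that $h_i$ is continuous across the zero set of $f_1+f_2$ is exactly where the hypothesis $\|h(\cdot)\|_2\le f_1+f_2$ enters (it forces $\|h_i(x)\|_2\le\|h(x)\|_2\to0$ there). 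This is the one point I expect to need care. Then $|\sk{\nu,h}|\le\sk{|\nu|_2,f_1}+\sk{|\nu|_2,f_2}$, and taking suprema over $h$ (resp.\ over $h_1,h_2$) gives both inequalities.

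Granting additivity and positive homogeneity, I would extend $|\nu|_2$ to $C(K)$ in the standard way: every real $f\in C(K)$ is $f^+-f^-$ with $f^\pm\in C(K)_+$, so $\sk{|\nu|_2,f}:=\sk{|\nu|_2,f^+}-\sk{|\nu|_2,f^-}$ is well defined and $\R$-linear on the real part, and one complexifies by $\sk{|\nu|_2,f}:=\sk{|\nu|_2,\Re f}+i\sk{|\nu|_2,\Im f}$; the extension is positive by construction, and it is the unique linear extension because $C(K)$ is the $\C$-span of $C(K)_+$. Finally, a positive linear functional $\ph$ on $C(K)$ satisfies $\|\ph\|=\sk{\ph,\1}$: for $\|f\|_\infty\le1$ choose $\t$ with $e^{i\t}\sk{\ph,f}=|\sk{\ph,f}|$, note that $|\sk{\ph,f}|=\sk{\ph,\Re(e^{i\t}f)}$ since $\ph$ is real on real functions, and $-\1\le\Re(e^{i\t}f)\le\1$ gives $|\sk{\ph,f}|\le\sk{\ph,\1}$, the reverse bound being trivial. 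Applying this with $\ph=|\nu|_2$ together with $\sk{|\nu|_2,\1}=\|\nu\|$ yields $\||\nu|_2\|=\|\nu\|$, completing the proof.
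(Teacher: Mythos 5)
Your proof is correct, and its overall architecture (additivity and positive homogeneity on the positive cone, then the standard extension via $f^+-f^-$ and complexification, then the norm identity from positivity) is the same as the paper's. The one step where you genuinely diverge is subadditivity. The paper avoids the continuity question for the exact splitting by first multiplying $h$ by a Urysohn function $\psi$ supported in $\{\|h(\cdot)\|_2>0\}$ with $\1-\psi$ supported in $\{\|h(\cdot)\|_2<\e\}$, splitting $\psi h$ instead, and paying an $\e\|\nu\|$ error that is removed at the end. You instead take the exact splitting $h_i=\tfrac{f_i}{f_1+f_2}h$ (set to $0$ on the zero set of $f_1+f_2$) and verify continuity directly; this does work, precisely for the reason you identify: the scalar factor lies in $[0,1]$, so $\|h_i(x)\|_2\le\|h(x)\|_2\le f_1(x)+f_2(x)$, which tends to $0$ as $x$ approaches any zero of $f_1+f_2$, forcing $h_i$ to be continuous there. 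Your route is slightly cleaner (no $\e$, and $h_1+h_2=h$ exactly), at the cost of having to make that continuity argument explicit; the paper's cutoff sidesteps it entirely. Your closing argument that a positive linear functional $\ph$ on $C(K)$ has $\|\ph\|=\sk{\ph,\1}$ is also a bit more complete than the paper's, which only records the bound $\sk{|\nu|_2,f}\le\|f\|_\infty\|\nu\|$ on the positive cone and leaves the passage to general complex $f$ implicit.
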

\begin{proof}
The positive homogeneity of $|\nu|_2$ is clear from the definition. To
see additivity, take $0\le f_1,f_2\in C(K)$ and $\|h_1(\cdot)\|_2\le
f_1$ and $\|h_2(\cdot)\|_2\le f_2$. Then we have for certain
$c_1,c_2\in\Torus$
\begin{align*}
  |\sk{\nu,h_1}|+|\sk{\nu,h_2}|&=|c_1\sk{\nu,h_1}+c_2\sk{\nu,h_2}| \le
  \sk{|\nu|_2,\|c_1 h_1(\cdot)+c_2 h_2(\cdot)\|_2} \\
&\le
  \sk{|\nu|_2,\|h_1(\cdot)\|_2+\|h_2(\cdot)\|_2} \le \sk{|\nu|_2,f_1+f_2}
\end{align*}
and thus $\sk{|\nu|_2,f_1}+\sk{|\nu|_2,f_2}\le \sk{|\nu|_2,f_1+f_2}$. 

For the converse inequality take $\|h(\cdot)\|_2\le f_1+f_2$ and
$\e>0$. The open sets 
$$U_1:=\{x\in K: \|h(x)\|_2>0\}\en \text{ and }\en
U_2:=\{x\in K: \|h(x)\|_2<\e\}$$
cover $K$. Hence by Theorem D.6 in \cite{efhn} there exists a
function $\psi\in C(K)$ with $0\le\psi\le\1$ and 
$\supp(\psi)\subset U_1$ and $\supp(\1-\psi)\subset U_2$. 
Define for $j=1,2$
$$h_j(x):=\left\{
  \begin{array}{ll}
    \psi(x)\frac{f_j(x)}{f_1(x)+f_2(x)}h(x),& f_1(x)+f_2(x)\neq 0\\
0,& \text{else}.
  \end{array}
\right. $$
Then we have $h_j\in C(K)$, $h_1+h_2=\psi h$ and $\|h_j(\cdot)\|_2\le
f_j$ for each $j=1,2$. Moreover, we obtain 
$$\|h(\cdot)-(h_1+h_2)(\cdot)\|_2=\|(\1-\psi)h(\cdot)\|_2=|\1-\psi|\|h(\cdot)\|_2<
\e \1$$
and thus 
\begin{align*}
  |\sk{\nu,h}|&\le |\sk{\nu,h_1+h_2}|+\e\|\nu\|\le
  \sk{|\nu|_2,f_1}+\sk{|\nu|_2,f_2}+\e\|\nu\|. 
\end{align*}
Hence $\sk{|\nu|_2,f_1+f_2}\le
\sk{|\nu|_2,f_1}+\sk{|\nu|_2,f_2}+\e\|\nu\|$ and thus 
 $$\sk{|\nu|_2,f_1+f_2}\le \sk{|\nu|_2,f_1}+\sk{|\nu|_2,f_2}$$ 
by letting $\e\downarrow 0$.

Finally, we extend $|\nu|_2$ first to $C(K,\R)$ by 
$$\sk{|\nu|_2,f}:= \sk{|\nu|_2,f^+}-\sk{|\nu|_2,f^-}\quad (f\in C(K,\R)),$$
where $f^+:=\sup\{f,0\}$ and $f^-:=\sup\{-f,0\}$,
and then to $C(K)$ by
$$\sk{|\nu|_2,f}:=\sk{|\nu|_2,\Re f}+i \sk{|\nu|_2,\Im f}\quad (f\in
C(K)).$$
It is straightforward to check that in this way $|\nu|_2$ becomes
linear on $C(K)$.
The boundedness of $|\nu|_2$ follows from
$$\sk{|\nu|_2,f}\le \sk{|\nu|_2,\|f\|_\infty \1}=\|f\|_\infty \|\nu\| \quad (0\le f\in C(K)).$$
This implies $\||\nu|_2\|\le\|\nu\|$, and equality follows from $\sk{|\nu|_2,\1}=\|\nu\|$.
\end{proof}
The notation $|\nu|_2$ for a
functional $\nu\in C(K,\C^N)'$ and a measure $\nu\in M(K,\C^N)$ is
justified by the following theorem.

\begin{thm}
\label{thm:commutative-diagram}
  The following diagram commutes.
    $$\begin{CD}
      M(K,\C^N) @>\tilde{d}>>C(K,\C^N)' \\
      @VV|\cdot|_2V @VV|\cdot|_2V\\
      M(K) @>d>> C(K)'
    \end{CD}$$
\end{thm}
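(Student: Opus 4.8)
The plan is to unwind both total-variation constructions and show they agree on every $\nu\in M(K,\C^N)$. So fix $\nu=(\nu_1,\dots,\nu_N)\in M(K,\C^N)$; write $\widetilde{d}\nu\in C(K,\C^N)'$ for its image under the top arrow, $|\nu|_2\in M(K)$ for the total-variation measure (the left arrow), and $|\widetilde d\nu|_2\in C(K)'$ for the functional total variation (the right arrow). I must prove $d\,|\nu|_2=|\widetilde d\nu|_2$ as elements of $C(K)'$, i.e.\ $\int_K f\,d|\nu|_2=\sk{|\widetilde d\nu|_2,f}$ for every $0\le f\in C(K)$ (both sides being positive functionals, equality on nonnegative $f$ suffices by the linear-extension uniqueness already established). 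By definition $\sk{|\widetilde d\nu|_2,f}=\sup\{|\sk{\widetilde d\nu,h}|:h\in C(K,\C^N),\ \|h(\cdot)\|_2\le f\}$, while $\int_K f\,d|\nu|_2$ can be approximated from the partition definition of $|\nu|_2$.

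First I would prove the inequality $\sk{|\widetilde d\nu|_2,f}\le \int_K f\,d|\nu|_2$. Take any $h\in C(K,\C^N)$ with $\|h(\cdot)\|_2\le f$. Then $|\sk{\widetilde d\nu,h}|=\bigl|\sum_i\int_K h_i\,d\nu_i\bigr|$, and a standard estimate (e.g.\ via the Radon--Nikod\'ym derivatives of the $\nu_i$ with respect to $|\nu|_2$, which exist since each $|\nu_i|\ll|\nu|_2$) gives $\bigl|\sum_i\int h_i\,d\nu_i\bigr|\le\int_K\|h(\cdot)\|_2\,d|\nu|_2\le\int_K f\,d|\nu|_2$. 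Concretely, writing $d\nu_i=u_i\,d|\nu|_2$ with $\sum_i|u_i(x)|^2\le 1$ for $|\nu|_2$-a.e.\ $x$ (this last point being the crux of why $|\nu|_2$ is the right measure, and it follows from the partition definition together with the vector Cauchy--Schwarz inequality), we get $\sum_i\int h_i u_i\,d|\nu|_2\le\int\|h(\cdot)\|_2\,\|u(\cdot)\|_2\,d|\nu|_2\le\int f\,d|\nu|_2$. Taking the supremum over such $h$ yields the claimed inequality.

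For the reverse inequality $\int_K f\,d|\nu|_2\le\sk{|\widetilde d\nu|_2,f}$, I would fix $\e>0$ and build a single continuous $h$ with $\|h(\cdot)\|_2\le f$ and $|\sk{\widetilde d\nu,h}|\ge\int_K f\,d|\nu|_2-\e$. Choose a finite measurable partition $K=\bigsqcup_{j=1}^m E_j$ with $\sum_j\|\nu(E_j)\|_2$ close to $|\nu|_2(K)$ and refine it so that $f$ is nearly constant, say $\approx f_j$, on each $E_j$; on $E_j$ put the constant unit vector $w_j:=\overline{\nu(E_j)}/\|\nu(E_j)\|_2$ scaled by $f_j$, so that formally $\sum_j f_j\sk{\nu(E_j),w_j}=\sum_j f_j\|\nu(E_j)\|_2\approx\int f\,d|\nu|_2$. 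This candidate is only measurable, not continuous, so I would smooth it: using regularity of the finite measure $|\nu|_2$ (and of the $|\nu_i|$) pick compact $C_j\subset E_j$ carrying almost all the mass, and use Urysohn/partition-of-unity functions (as in the preceding proposition, via Theorem~D.6 in \cite{efhn}) to patch the vectors $f_j w_j$ together into a genuine $h\in C(K,\C^N)$ with $\|h(\cdot)\|_2\le f$ everywhere and $\sk{\widetilde d\nu,h}$ within $\e$ of $\sum_j f_j\|\nu(E_j)\|_2$. Letting the partition get finer and $\e\downarrow 0$ gives $\sk{|\widetilde d\nu|_2,f}\ge\int_K f\,d|\nu|_2$, completing the equality.

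The main obstacle is this second inequality: the continuity requirement on $h$ forces the smoothing argument, and one must control the error introduced near the (measurable, not open) boundaries between the partition pieces. The already-proven additivity proposition and the cited Urysohn-type lemma from \cite{efhn} are exactly the tools for this; the scalar case $N=1$ is the classical identification of $|\nu|$ for $\nu\in M(K)=C(K)'$, and the vector case is the same argument carried out coordinatewise with the Euclidean norm in place of absolute value. A clean alternative, which I would mention, is to reduce to $N=1$ by noting $|\widetilde d\nu|_2$ and $d\,|\nu|_2$ are both determined by their values on $f$ of the form $\|k(\cdot)\|_2$; but the direct partition argument above is self-contained.
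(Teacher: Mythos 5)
Your plan is correct, and it reaches the identity $|\tilde{d}\nu|_2=d|\nu|_2$ by a genuinely different organization than the paper. The paper runs a single chain of equalities: it regards $C(K,\C^N)$ as a dense subspace of $L^1(K,\nu)$, replaces the supremum over continuous $h$ with $\|h(\cdot)\|_2\le f$ by a supremum over simple vector-valued functions dominated by simple scalar functions below $f$, and then performs in place the same optimization you use (the supremum of $|\sk{\alpha,\nu(E)}_2|$ over $\|\alpha\|_2\le\beta$ is attained at $\alpha=\beta\,\nu(E)/\|\nu(E)\|_2$ and equals $\beta\|\nu(E)\|_2$), after which the partition definition of $|\nu|_2$ gives $\int_K f\,d|\nu|_2$. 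You instead split into two inequalities: the upper bound via the Radon--Nikod\'ym densities $u_i$ of $\nu_i$ with respect to $|\nu|_2$ together with $\|u(\cdot)\|_2\le 1$ a.e.\ and Cauchy--Schwarz, and the lower bound via an explicit near-optimal simple function smoothed by inner regularity and Urysohn functions. What your route buys is rigor exactly where the paper is terse: the passage between continuous and simple test functions is the delicate point, and the paper compresses it into the single phrase about $L^1$-density, whereas you make both directions of that approximation explicit. What the paper's route buys is brevity and the avoidance of both the Radon--Nikod\'ym machinery and the Urysohn construction.

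One detail to nail down in your smoothing step: after replacing $E_j$ by compact subsets $C_j$ and disjoint open neighbourhoods $U_j$ with Urysohn functions $\psi_j$, the naive candidate $\sum_j f_j\psi_j w_j$ need not satisfy $\|h(x)\|_2\le f(x)$ at points of $U_j\setminus E_j$ where $f(x)<f_j$. This is repaired by taking $h=f\cdot\sum_j\psi_j w_j$ (the $\psi_j$ having disjoint supports and $0\le\sum_j\psi_j\le\1$), or by choosing $f_j=\inf_{U_j}f$; either variant preserves the value of $\sk{\tilde{d}\nu,h}$ up to the error you already budget for. With that adjustment the argument closes.
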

\begin{proof}
Let $\nu=(\nu_1,\dots,\nu_N)\in M(K,\C^N)$ and $0\le f\in C(K)$. 
We consider $C(K,\C^N)$ as a dense subspace of $L^1(K,\nu)$ and obtain
\begin{align*}
  \sk{|\tilde{d}\nu|_2,f}&=\sup\left\{\left|\sk{\tilde{d}\nu,h}\right|:
    h\in C(K,\C^N), \|h(\cdot)\|_2\le f\right\} \\
& =\sup \left\{\left|\sk{\tilde{d}\nu,h}\right|:  0\le\sum_{j\in\N}\beta_j\1_{E_j}\le f, \bigsqcup_j E_j=K, \|h(\cdot)\|_2\le\sum_{j\in\N}\beta_j\1_{E_j}\right\}\\
&=\sup \left\{\left|\sum_{i=1}^N\sum_{j,l\in\N}\alpha_{j,l}^{(i)}\nu_i(E_{j,l})\right|   : {0\le\sum_{j\in\N}\beta_j\1_{E_j}\le f, \bigsqcup_j E_j=K, \atop \sum_{j,l\in\N}\|\alpha_{j,l}\|_2\1_{E_{j,l}}\le\sum_{j\in\N}\beta_j\1_{E_j}, \bigsqcup_l E_{j,l}=E_j}\right\}\\
&=\sup \left\{\left|\sum_{j,l\in\N}\sk{\alpha_{j,l},\nu(E_{j,l})}_2\right|   : {0\le\sum_{j\in\N}\beta_j\1_{E_j}\le f, \bigsqcup_j E_j=K, \atop \sum_{j,l\in\N}\|\alpha_{j,l}\|_2\1_{E_{j,l}}\le\sum_{j\in\N}\beta_j\1_{E_j}, \bigsqcup_l E_{j,l}=E_j}\right\}
\end{align*}
Under the condition $\|\alpha_{j,l}\|_2\le\beta_j$ for all $j,l\in\N$, the expression $\left|\sum_{j,l\in\N}\sk{\alpha_{j,l},\nu(E_{j,l})}_2\right|$ becomes maximal if $\alpha_{j,l}=\beta_j\frac{\nu(E_{j,l})}{\|\nu(E_{j,l})\|_2}$ for all $j,l\in\N$. 
Hence 
\begin{align*}
   \sk{|\tilde{d}\nu|_2,f}
&=\sup \left\{\sum_{j,l\in\N}\beta_j\|\nu(E_{j,l})\|_2 : {\sum_{j\in\N}\beta_j\1_{E_j}\le f, \bigsqcup_j E_j=K,\,  \bigsqcup_l E_{j,l}=E_j}\right\}\\
&=\sup \left\{\sum_{j\in\N}\beta_j|\nu|_2(E_{j}) : {\sum_{j\in\N}\beta_j\1_{E_j}\le f, \,\bigsqcup_j E_j=K}\right\}\\
&=\sup \left\{\int_K\sum_{j\in\N}\beta_j\1_{E_j}d|\nu|_2 : {\sum_{j\in\N}\beta_j\1_{E_j}\le f, \,\bigsqcup_j E_j=K}\right\}\\
&=\int_K f\, d|\nu|_2=\sk{d|\nu|_2,f}
\end{align*}
and thus $|\tilde{d}\nu|_2=d|\nu|_2$.
\end{proof}

By virtue of Theorem \ref{thm:riesz} and Theorem
\ref{thm:commutative-diagram} we shall identify $M(K,\C^N)$ with
$C(K,\C^N)'$ and we will use the same notation $|\nu|_2$ for a measure $\nu\in M(K,\C^N)$ and
a functional $\nu\in C(K,\C^N)'$ without explicitly distinguishing
these two objects. 

We now return to the situation of the beginning of this section and
characterize the mean ergodicity of $\gamma\S$ for a continuous
cocycle $\gamma\in \Gamma(G\times K, U(N))$.
For a function $h\in L^2(K,\C^N,\mu)$ we denote by $\ol{h} d\mu\in
C(K,\C^N)'$ the functional defined by 
$$\sk{\ol{h} d\mu,f}:=\sk{f,h}_{L^2(K,\C^N,\mu)}=\sum_{i=1}^N\int_K
f_i(x)\ol{h_i(x)}d\mu(x)\quad (f\in C(K,\C^N)).$$

\begin{lemma}
  \label{lemma:Fixraum_dualer_Fixraum_cocycles}
Let the action of $G$ on $K$ be uniquely ergodic with invariant
measure $\mu$ and let $\S$ and $\S_2$ be the corresponding Koopman
representations on $C(K,\C^N)$ and $L^2(K,\C^N,\mu)$, respectively.
If $\gamma:G\times K\to U(N)$ is a continuous cocycle, then the map 
\begin{align*}
 L^2(K,\C^N,\mu)\supseteq\Fix( \gamma\S_2)^*&\to\Fix  (\gamma\S)'\subseteq C(K,\C^N)'\\
h\quad&\mapsto\quad \ol{h} d\mu
\end{align*}
is antilinear and bijective.
\end{lemma}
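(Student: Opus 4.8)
The plan is to mimic the proof of Lemma~\ref{lemma:Fixraum_dualer_Fixraum}, replacing scalars by vectors and the character $\chi$ by the unitary cocycle $\gamma$. First I would check that the map is well-defined, i.e.\ that $\ol{h}d\mu\in\Fix(\gamma\S)'$ whenever $h\in\Fix(\gamma\S_2)^*$. This is the same chain of equalities as before: for $f\in C(K,\C^N)$ and $g\in G$,
\begin{align*}
\sk{(\gamma(g,\cdot)S_g)'(\ol{h}d\mu),f}
&=\sk{\ol{h}d\mu,\gamma(g,\cdot)S_g f}
=\sk{\gamma(g,\cdot)S_{g,2}f,h}_{L^2}\\
&=\sk{f,(\gamma(g,\cdot)S_{g,2})^* h}_{L^2}
=\sk{f,h}_{L^2}=\sk{\ol{h}d\mu,f},
\end{align*}
using only that $\ol{h}d\mu$ restricted to $C(K,\C^N)$ agrees with integration against $h$ and that $h$ is fixed by $(\gamma\S_2)^*$. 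Antilinearity is immediate from the conjugate appearing in $\ol{h}d\mu$, and injectivity follows because $\ol{h}d\mu=0$ forces $\sk{f,h}_{L^2}=0$ for all $f\in C(K,\C^N)$, hence $h=0$ by density of $C(K,\C^N)$ in $L^2(K,\C^N,\mu)$.

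The substance is surjectivity. Given $\nu\in\Fix(\gamma\S)'$, I would reproduce the total-variation estimate from Lemma~\ref{lemma:Fixraum_dualer_Fixraum} in the vector-valued setting: the claim to establish is $|\nu|_2\le S_g'|\nu|_2$ in $C(K)'$ for every $g\in G$, where $S=\S^{(1)}$ is the scalar Koopman operator. For $0\le f\in C(K)$ one writes, with $h\in C(K,\C^N)$ ranging over functions with $\|h(\cdot)\|_2\le f$,
$$\sk{|\nu|_2,f}=\sup_{\|h(\cdot)\|_2\le f}|\sk{\nu,h}|
=\sup_{\|h(\cdot)\|_2\le f}|\sk{(\gamma(g,\cdot)S_g)'\nu,h}|
=\sup_{\|h(\cdot)\|_2\le f}|\sk{\nu,\gamma(g,\cdot)S_g h}|;$$
since $\gamma(g,x)\in U(N)$ is an isometry of $\C^N$ we have $\|\gamma(g,x)(S_g h)(x)\|_2=\|(S_gh)(x)\|_2=\|h(gx)\|_2\le f(gx)=(S_gf)(x)$, so each competitor $\gamma(g,\cdot)S_g h$ is dominated in $\|\cdot\|_2$ by $S_g f$, giving $\sk{|\nu|_2,f}\le\sk{|\nu|_2,S_gf}=\sk{S_g'|\nu|_2,f}$. (Here $S_g$ acts on $C(K)$ as $S_gf=f\circ(g\cdot)$, and one uses that $S_g\1=\1$, i.e.\ $S_g$ is a Markov operator on $C(K)$.) Then, exactly as before, for $0\le f\in C(K)$,
$$0\le\sk{S_g'|\nu|_2-|\nu|_2,f}\le\sk{S_g'|\nu|_2-|\nu|_2,\|f\|_\infty\1}=\|f\|_\infty\sk{|\nu|_2,S_g\1-\1}=0,$$
so $|\nu|_2\in\Fix{\S^{(1)}}'=\C\cdot\mu$ by unique ergodicity of the action. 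Consequently $\nu$ is absolutely continuous with respect to $\mu$ in the vector sense, and the Radon--Nikod\'ym theorem (applied coordinatewise, or to the $\C^N$-valued measure dominated by $|\nu|_2$) yields $h\in L^\infty(K,\C^N,\mu)\subseteq L^2(K,\C^N,\mu)$ with $\nu=\ol{h}d\mu$. Finally, the same calculation as in the well-definedness step, read backwards, shows $(\gamma\S_2)^* h=h$, i.e.\ $h\in\Fix(\gamma\S_2)^*$.

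I expect the main obstacle to be the estimate $|\nu|_2\le S_g'|\nu|_2$, specifically making precise that the supremum defining $\sk{|\nu|_2,f}$ is preserved (not just bounded) under the substitution $h\mapsto\gamma(g,\cdot)S_g h$ and that unitarity of $\gamma$ is exactly what keeps the constraint $\|h(\cdot)\|_2\le f$ intact; everything else is a routine transcription of the scalar proof, with Theorem~\ref{thm:riesz} and Theorem~\ref{thm:commutative-diagram} justifying the identification of $C(K,\C^N)'$ with $M(K,\C^N)$ and the compatibility of the two meanings of $|\cdot|_2$ that makes ``$|\nu|_2\in\C\cdot\mu\Rightarrow\nu\ll\mu$'' legitimate. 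A secondary point to handle carefully is the vector-valued Radon--Nikod\'ym step: since $\mu$ is a finite scalar measure and $\nu\in M(K,\C^N)\cong M(K)^N$ with each coordinate $\nu_i\ll|\nu|_2\ll\mu$, one simply applies the classical scalar Radon--Nikod\'ym theorem to each $\nu_i$ and assembles the densities into $h=(h_1,\dots,h_N)$, with $\|h(\cdot)\|_2\in L^\infty$ because $|\nu|_2=d|\nu|_2$ is a bounded multiple of $\mu$.
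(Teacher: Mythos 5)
Your proposal is correct and follows essentially the same route as the paper's proof: well-definedness via the adjoint computation, the total-variation estimate $|\nu|_2\le S_g'|\nu|_2$ using unitarity of $\gamma(g,x)$ and the Markov property of $S_g$ on $C(K)$ to force $|\nu|_2\in\C\cdot\mu$, and then coordinatewise Radon--Nikod\'ym together with Theorem~\ref{thm:commutative-diagram} to produce $h$ and verify $h\in\Fix(\gamma\S_2)^*$ by reading the first computation backwards. The only cosmetic difference is that you obtain the key inequality by noting each competitor $\gamma(g,\cdot)S_gh$ is admissible in the supremum defining $\sk{|\nu|_2,S_gf}$, whereas the paper inserts the intermediate bound $|\sk{\nu,\gamma(g,\cdot)S_gh}|\le\sk{|\nu|_2,\|\gamma(g,\cdot)S_gh(\cdot)\|_2}=\sk{|\nu|_2,S_g\|h(\cdot)\|_2}$; these are the same argument.
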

\begin{proof}
    To see that the map is well defined, take $h\in\Fix(\gamma\S_2)^*$. 
  Then for all $f\in C(K,\C^N)$ and $g\in G$  we have
 \begin{align*}
  \sk{(\gamma(g,\cdot)S_g)'(\ol{h} d\mu),f}&=\sk{\ol{h} d\mu,\gamma(g,\cdot)S_g f}
=\sk{\gamma(g,\cdot)S_{g,2} f,h}_{L^2(K,\C^N,\mu)}\\
&=\sk{f,(\gamma(g,\cdot)S_{g,2})^*h}_{L^2(K,\C^N,\mu)}\\
&=\sk{f,h}_{L^2(K,\C^N,\mu)}
=\sk{\ol{h} d\mu,f},
  \end{align*}
 yielding $\ol{h} d\mu\in\Fix (\gamma\S)'$.

%
As antilinearity and injectivity are clear, it remains to show
surjectivity. Let $\nu=(\nu_1,\dots,\nu_N)\in\Fix (\gamma\S)'$. We
claim that $|\nu|_2\le
S_g'|\nu|_2$ for all $g\in G$. Indeed, if $0\le f\in C(K)$ and $g\in G$,
then
\begin{align*}
  \sk{|\nu|_2,f}&=\sup_{\|h(\cdot)\|_2\le f}|\sk{\nu,h}|\\
&=\sup_{\|h(\cdot)\|_2\le
    f}|\sk{\nu,\gamma(g,\cdot)S_g h}|\\
&\le \sup_{\|h(\cdot)\|_2\le f}\sk{|\nu|_2,\|\gamma(g,\cdot) S_g h(\cdot)\|_2}\\
&= \sup_{\|h(\cdot)\|_2\le f}\sk{|\nu|_2,S_g\|h(\cdot)\|_2}\\
&=\sk{|\nu|_2,S_g f}=\sk{S_g'|\nu|_2,f},
\end{align*}
since $\gamma(g,x)$ is unitary for all $x\in K$.

 If $0\le f\in C(K)$ and $g\in G$, then 
$$0\le\sk{S_g'|\nu|_2-|\nu|_2,f}\le
\sk{S_g'|\nu|_2-|\nu|_2,\|f\|_\infty\1}=\|f\|_\infty\sk{|\nu|_2,S_g\1-\1}=0$$ 
and thus $|\nu|_2\in \Fix\S'=\C\cdot \mu$ by unique ergodicity. 
As a consequence of Theorem~\ref{thm:commutative-diagram} and since
$|\nu_i|\le |\nu|_2$ the measures $\nu_i$ 
are thus absolutely continuous with respect to $\mu$ for each
$i=1,\dots,N$. 
The Radon-Nikodým Theorem then implies
  the existence of functions $h_i\in L^\infty(K,\mu)$  such that $\nu_i=\ol{h_i} 
  d\mu$ for all $i=1,\dots,N$. 
Defining $h:=(h_1,\dots,h_N)\in
  L^\infty(K,\C^N,\mu)$ we obtain $\nu=\ol{h}  d\mu$ and the same
  calculation as above shows that $h\in \Fix(\gamma \S_2)^*$.
%
\end{proof}

\begin{lemma}
\label{lemma:dimension-fixraum2}
  Let the action of a right amenable semigroup $G$ on $K$ be ergodic
  with respect to some invariant measure $\mu$ and let $\S_2$ be the
  corresponding Koopman representation on $L^2(K,\C^N,\mu)$. If $\gamma: G\times
  K\to U(N)$ is a continuous cocycle, then $\dim \Fix\gamma\S_2\le N$.
\end{lemma}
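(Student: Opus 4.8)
The plan is to mimic the argument of Lemma~\ref{lemma:dimension-fixraum}, but working coordinatewise and tracking the scalar subgroup $\Torus\cdot\1$ that sits inside $U(N)$. First I would record the basic reduction: a fixed vector $f\in\Fix\gamma\S_2$ satisfies $S_{g,2}f=\overline{\gamma(g,\cdot)}^{\,-1}f=\gamma(g,\cdot)^{*}f$ pointwise $\mu$-a.e., and in particular each coordinate $f_i$ lies in $\ol{\lin}\{S_{g,2}f_1,\dots,S_{g,2}f_N:g\in G\}$, a finite-dimensional $\S_2$-invariant subspace of $L^2(K,\mu)$. By the Jacobs--de Leeuw--Glicksberg / compact-group machinery quoted in the proof of Lemma~\ref{lemma:dimension-fixraum} (\cite[Theorem~16.29]{efhn}, together with \cite[Theorems~16.11, 16.22]{efhn}), the scalar $L^2(K,\mu)$ admits the minimal idempotent $Q$, and every such finite-dimensional invariant vector lies in $\ran Q$; hence each $f_i\in\ran Q$, and the restrictions $T_g=S_{g,2}|_{\ran Q}$ are lattice (indeed algebra) homomorphisms with $T_g\1=\1$, exactly as before.

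Next I would pass to the pointwise modulus. Writing $|f(x)|_2=\big(\sum_i|f_i(x)|^2\big)^{1/2}$, unitarity of $\gamma(g,x)$ gives $|f(gx)|_2=|\gamma(g,x)f(gx)|_2$, i.e.\ $S_{g,2}\,|f|_2=|f|_2$ as an element of $L^2(K,\mu)$ (one has to note $|f|_2$ is again in $\ran Q$, which follows since $\ran Q$ is a sublattice and the $f_i$ are in $\ran Q\cap L^\infty$, so finitely many lattice operations stay inside). Ergodicity then forces $|f|_2=c_f\cdot\1$ for a constant $c_f\ge 0$, so after normalization every nonzero element of $\Fix\gamma\S_2$ has constant Euclidean length $1$ $\mu$-a.e.

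Now comes the dimension count, which is the step I expect to carry the weight. Suppose $f^{(1)},\dots,f^{(N+1)}\in\Fix\gamma\S_2$, all normalized to $|f^{(k)}|_2=\1$. For each fixed $x$ the $N+1$ vectors $f^{(k)}(x)\in\C^N$ are linearly dependent, and the Gram matrix $\big(\langle f^{(k)}(x),f^{(l)}(x)\rangle_2\big)_{k,l}$ is singular. The key observation is that each scalar function $x\mapsto \langle f^{(k)}(x),f^{(l)}(x)\rangle_2 = \sum_i f_i^{(k)}(x)\overline{f_i^{(l)}(x)}$ lies in $\ran Q\cap L^\infty(K,\mu)$ (products of elements of $\ran Q\cap L^\infty$ stay there since $T_g$ are algebra homomorphisms, so the whole algebra $\ran Q\cap L^\infty$ is $\S_2$-invariant in the right sense), and is $\S_2$-fixed: $S_{g,2}\langle f^{(k)},f^{(l)}\rangle_2 = \langle \gamma(g,\cdot)^* f^{(k)}\circ g, \gamma(g,\cdot)^* f^{(l)}\circ g\rangle_2 = \langle S_{g,2}f^{(k)}, S_{g,2}f^{(l)}\rangle_2$, and the unitary $\gamma(g,x)$ cancels in the inner product, leaving $\langle f^{(k)},f^{(l)}\rangle_2$ itself. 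Hence by ergodicity each Gram entry is a constant $c_{k,l}\in\C$, so the constant Hermitian matrix $(c_{k,l})$ is the a.e.\ Gram matrix and is singular. Picking a nonzero kernel vector $(a_k)\in\C^{N+1}$ gives $\big|\sum_k a_k f^{(k)}(x)\big|_2^2 = \sum_{k,l} a_k\overline{a_l}\,c_{k,l} = 0$ for $\mu$-a.e.\ $x$, so $\sum_k a_k f^{(k)}=0$ in $L^2(K,\C^N,\mu)$. Thus any $N+1$ elements of $\Fix\gamma\S_2$ are linearly dependent, giving $\dim\Fix\gamma\S_2\le N$. The main obstacle is the bookkeeping needed to justify that the coordinate functions and their pairwise products genuinely live in the algebra $\ran Q\cap L^\infty(K,\mu)$ where the lattice/algebra-homomorphism and ergodicity arguments apply; once that is in place, the Gram-matrix argument is elementary linear algebra over a point fibre made global by ergodicity.
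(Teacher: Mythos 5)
Your core argument is correct and rests on exactly the same observation as the paper's proof: for $f,h\in\Fix\gamma\S_2$ one has $f(gx)=\gamma(g,x)^{-1}f(x)$ $\mu$-a.e., so unitarity of $\gamma(g,x)$ makes the pointwise inner product $x\mapsto\sk{f(x),h(x)}_2$ (and in particular $x\mapsto\|f(x)\|_2$) a fixed point of $\S_2^{(1)}$, hence constant by ergodicity. The paper then orthonormalizes $N$ of the functions pointwise by Gram--Schmidt and expands the $(N+1)$st in that basis with constant coefficients; your Gram-matrix variant reaches the same conclusion and is arguably cleaner, since it sidesteps the measurability and degeneracy issues of a pointwise Gram--Schmidt (which breaks down at points where the vectors become linearly dependent). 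So the dimension count itself is sound.

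The first paragraph, however, contains a claim that is false and should be deleted (fortunately nothing later depends on it). The space $\ol{\lin}\{S_{g,2}f_i: g\in G,\ i=1,\dots,N\}$ is not finite-dimensional in general: from $S_{g,2}f_i=\sum_j(\gamma(g,\cdot)^{-1})_{ij}f_j$ you only get that the orbit lies in the module generated by $f_1,\dots,f_N$ over the algebra of coefficient functions $(\gamma(g,\cdot)^{-1})_{ij}$, not in their finite-dimensional $\C$-linear span; already for $N=1$ and a genuinely $x$-dependent cocycle the span of $\{\ol{\gamma(g,\cdot)}f: g\in G\}$ can be infinite-dimensional. Consequently the appeal to \cite[Theorem~16.29]{efhn} and the whole $\ran Q$/lattice/algebra-homomorphism apparatus is not justified this way --- and it is also unnecessary: the invariance of $\|f(\cdot)\|_2$ and of the Gram entries follows from the direct computation you also wrote down, and after normalizing $\|f^{(k)}(\cdot)\|_2=\1$ these scalar functions are bounded, hence in $L^2(K,\mu)$, so ergodicity applies to them without any detour through the Jacobs--de Leeuw--Glicksberg decomposition.
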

\begin{proof}
Suppose $\dim \Fix\gamma\S_2> N$ and take $N+1$ linearly independent
functions $f_1,\dots, f_N,h\in \Fix\gamma\S_2$. 
We may assume that $\|f_i(\cdot)\|_2=\1$ for each $i\in\{1,\dots,N\}$
since if $f_i\in \Fix\gamma\S_2$ then $\|f_i(\cdot)\|_2\in\Fix\S_2$
and thus $\|f_i(\cdot)\|_2$ is constant by ergodicity. 
Moreover, by a pointwise application of the Gram-Schmidt process, we
may assume that $\sk{f_i(x),f_j(x)}_2=\delta_{ij}$ for $\mu$-a.e. $x\in
K$ and each $i,j\in\{1,\dots,N\}$.
Hence $h$ can be written as
$$h(x)=\sum_{i=1}^N \sk{h(x),f_i(x)}_2f_i(x)\qquad \mu\text{-a.e. }x\in K.$$

For each $i\in\{1,\dots, N\}$ we define the function $h\bullet f_i$ by $h\bullet
f_i(x):=\sk{h(x),f_i(x)}_2$ for $\mu$-a.e. $x\in K$ and claim that $h\bullet f_i$ is constant. 
Indeed, for each $i\in\{1,\dots,N\}$, $g\in G$ and $\mu$-a.e. $x\in K$ we have
\begin{align*}
  S_{g,2}(h\bullet
  f_i)(x)&=\sk{h(gx),f_i(gx)}_2=\sk{\gamma(g,x)^{-1}h(x),\gamma(g,x)^{-1}f_i(x)}_2\\
&=\sk{h(x),f_i(x)}_2=h\bullet f_i(x)
\end{align*}
since $\gamma(g,x)$ is unitary. Hence $h\bullet f_i\in\Fix\S_2^{(1)}$ and
thus $h\bullet f_i\in\C\cdot\1$ by ergodicity. 
Hence $h$ is a linear combination of $f_1,\dots, f_N$ contradicting
the linear independence.
\end{proof}

The following theorem is the analogue of
Theorem~\ref{thm:mean-ergodic} for cocycles and generalizes Theorem~4
of Walters \cite{walters96} to amenable semigroups. 

\begin{thm}
  \label{thm:mean-ergodic-koopman}
Let the action of a (right) amenable semigroup $G$ on $K$ be uniquely ergodic
with invariant measure $\mu$ and let $\S$ and $\S_2$ be the
  corresponding Koopman representations on $C(K,\C^N)$ and
  $L^2(K,\C^N,\mu)$, respectively. 
If $\gamma: G\times K\to U(N)$ is a
continuous cocycle, then the following assertions are equivalent.
\begin{enumerate}[(1)]
\item $\Fix\gamma\S_2\subseteq \Fix\gamma\S$.
\item $\gamma\S$ is mean ergodic on $C(K,\C^N)$ with mean ergodic projection $P_\gamma$.
\item $\Fix\gamma\S$ separates $\Fix(\gamma\S)'$.
\item $C(K,\C^N)= \Fix\gamma\S\oplus\ol{\lin}\rg(I-\gamma\S)$.
\item $A_\a^{\gamma\S} f$ converges weakly (to a fixed point of
  $\gamma\S$) for some/every strong (right) $\gamma\S$-ergodic
  net $(A_\a^{\gamma\S})$ and all $f\in C(K,\C^N)$.
\item $A_\a^{\gamma\S} f$ converges strongly (to a fixed point of
  $\gamma\S$) for some/every strong (right) $\gamma\S$-ergodic net
  $(A_\a^{\gamma\S})$ and all $f\in C(K,\C^N)$.
\end{enumerate}
The limit $P_\gamma$ of the nets $(A_\a^{\gamma\S})$ in the strong (weak, resp.)
operator topology is the mean ergodic projection of
$\gamma\S$ mapping $C(K,\C^N)$ onto $\Fix \gamma\S$ along
$\ol{\lin}\rg(I-\gamma\S)$.
\end{thm}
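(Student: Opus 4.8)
The plan is to follow the proof of Theorem~\ref{thm:mean-ergodic} step by step, replacing the scalar ingredients by their $\C^N$-valued analogues: Lemma~\ref{lemma:Fixraum_dualer_Fixraum_cocycles} in place of Lemma~\ref{lemma:Fixraum_dualer_Fixraum}, and Lemma~\ref{lemma:dimension-fixraum2} in place of Lemma~\ref{lemma:dimension-fixraum}. First I would record that $\gamma\S$ is a bounded, (right) amenable representation of $G$: the cocycle equation yields $(\gamma(g_1,\cdot)S_{g_1})(\gamma(g_2,\cdot)S_{g_2})=\gamma(g_2g_1,\cdot)S_{g_2g_1}$; each $\gamma(g,x)\in U(N)$ being unitary gives $\|\gamma(g,\cdot)S_g f\|=\sup_{x}\|\gamma(g,x)f(gx)\|_2=\sup_x\|f(gx)\|_2\le\|f\|$, so $\gamma\S$ consists of contractions; and joint continuity of $\gamma$ and of the action gives strong continuity. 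Hence $\gamma\S$, endowed with the strong operator topology, is (right) amenable as a bounded representation of the (right) amenable semigroup $G$. Regarding $C(K,\C^N)$ as the space $C(K\times\{1,\dots,N\})$ up to an equivalent norm -- which affects neither $\ol{\co}\,\gamma\S$, nor $\ol{\lin}\rg(I-\gamma\S)$, nor weak/strong convergence of nets, nor the presence of a zero element -- Proposition~\ref{prop:mean-ergodic} applies and delivers at once the equivalence of (2)--(6) together with the asserted description of $P_\gamma$ as the common limit of the $\gamma\S$-ergodic nets.

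It then remains to establish the equivalence (1)$\Leftrightarrow$(3), and here I would argue exactly as for Theorem~\ref{thm:mean-ergodic}. For (1)$\imp$(3): let $0\neq\nu\in\Fix(\gamma\S)'$. By Lemma~\ref{lemma:Fixraum_dualer_Fixraum_cocycles} we may write $\nu=\ol{h}\,d\mu$ for some $0\neq h\in\Fix(\gamma\S_2)^*$, and since $\gamma\S_2$ consists of contractions on the Hilbert space $L^2(K,\C^N,\mu)$ we have $\Fix(\gamma\S_2)^*=\Fix\gamma\S_2$, so $h\in\Fix\gamma\S_2$. By~(1), $h$ has a continuous representative lying in $\Fix\gamma\S$, and for this representative $\sk{\nu,h}=\|h\|^2_{L^2(K,\C^N,\mu)}>0$ (the value being nonzero because $h\mapsto\ol{h}\,d\mu$ is injective); hence $\Fix\gamma\S$ separates $\Fix(\gamma\S)'$. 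For (3)$\imp$(1): suppose $\Fix\gamma\S_2\not\subseteq\Fix\gamma\S$, i.e. there is $f\in\Fix\gamma\S_2$ with no continuous representative in $\Fix\gamma\S$. Since the action is uniquely ergodic it is in particular ergodic with respect to $\mu$ (this follows as in the remark after Lemma~\ref{lemma:Fixraum_dualer_Fixraum}, applied to the scalar Koopman semigroup $\S^{(1)}$ of Markov operators on $C(K)$), so Lemma~\ref{lemma:dimension-fixraum2} gives $\dim\Fix\gamma\S_2\le N<\infty$. Consequently the image of $\Fix\gamma\S$ in $L^2(K,\C^N,\mu)$ is a closed proper linear subspace of $\Fix\gamma\S_2$, so $f$ has a nonzero component $h\in\Fix\gamma\S_2$ orthogonal to $\Fix\gamma\S$. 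Putting $\nu:=\ol{h}\,d\mu$, Lemma~\ref{lemma:Fixraum_dualer_Fixraum_cocycles} gives $0\neq\nu\in\Fix(\gamma\S)'$, while $\sk{\nu,\varphi}=\sk{\varphi,h}_{L^2(K,\C^N,\mu)}=0$ for every $\varphi\in\Fix\gamma\S$; hence $\Fix\gamma\S$ does not separate $\Fix(\gamma\S)'$.

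The routine parts -- multiplicativity, contractivity and amenability of $\gamma\S$, the passage to $C(K\times\{1,\dots,N\})$, and the identity $\Fix(\gamma\S_2)^*=\Fix\gamma\S_2$ -- present no difficulty. The one genuinely substantive ingredient, and the single place where the cocycle argument diverges from the scalar one, is the finiteness of $\dim\Fix\gamma\S_2$ supplied by Lemma~\ref{lemma:dimension-fixraum2}: in Theorem~\ref{thm:mean-ergodic} the space $\Fix\chi\S_2$ was at most one-dimensional, so the implication (3)$\imp$(1) was essentially a dimension count, whereas here $\Fix\gamma\S_2$ may be $N$-dimensional and all one really needs is that it -- and hence the image of $\Fix\gamma\S$ inside it -- is closed in $L^2(K,\C^N,\mu)$, for otherwise orthogonality would not produce the desired separating functional. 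Beyond securing this point I do not anticipate any obstacle.
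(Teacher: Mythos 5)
Your proposal is correct and follows essentially the same route as the paper: the equivalence of (2)--(6) from the general mean ergodic theorem for amenable representations, (1)$\imp$(3) via Lemma~\ref{lemma:Fixraum_dualer_Fixraum_cocycles} and $\Fix(\gamma\S_2)^*=\Fix\gamma\S_2$, and (3)$\imp$(1) via the finite-dimensionality supplied by Lemma~\ref{lemma:dimension-fixraum2}. The only cosmetic differences are that the paper invokes the Banach-space versions of the ergodic results directly rather than passing through $C(K\times\{1,\dots,N\})$, and that it phrases (3)$\imp$(1) as a pure dimension count $\dim\Fix\gamma\S<\dim\Fix\gamma\S_2=\dim\Fix(\gamma\S)'$, of which your explicit orthogonal-complement construction of the non-separated functional is just the unwound form.
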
 
\begin{proof}
  The equivalence of the statements (2) to (6) follows directly from
Theorem~1.7 and
Corollary~1.8 in \cite{schreiber12}. 

Notice that $\Fix(\gamma\S_2)^*=\Fix\gamma\S_2$ since $\gamma\S_2$
consists of contractions on $L^2(K,\C^N,\mu)$.

(1)$\imp$(3): If $0\neq\nu\in \Fix(\gamma\S)'$, then $\nu=\ol{h}d\mu$ by
Lemma~\ref{lemma:Fixraum_dualer_Fixraum_cocycles} for some $0\neq h \in
\Fix(\gamma\S_2)^*=\Fix\gamma\S_2$. Since $\Fix\gamma\S_2\subseteq \Fix\gamma\S$
by (1), this yields $h\in \Fix\gamma\S$ and
$$\sk{\nu,h}=\|h\|^2_{L^2(K,\C^N,\mu)}>0.$$

(3)$\imp$(1): Suppose $f\in \Fix\gamma\S_2\setminus \Fix\gamma\S$. 
By Lemma~\ref{lemma:dimension-fixraum2} the space $\Fix\gamma\S_2$ is finite
dimensional and by Lemma~\ref{lemma:Fixraum_dualer_Fixraum_cocycles}
we thus have
$$\dim\Fix\gamma\S< \dim\Fix\gamma\S_2=\dim\Fix(\gamma\S)'.$$ 
Hence $\Fix\gamma\S$ does not separate $\Fix(\gamma\S)'$. 
\end{proof}

The following theorem characterizes mean ergodicity of $\gamma\S$
on the closed invariant subspace $Y_f:=\ol{\lin}\gamma\S f$ for some
$f\in C(K,\C^N)$ and $\gamma\in\Gamma(G\times K, U(N))$. 
It generalizes Theorem~8.1 of Lenz \cite{lenz09a} to amenable semigroups. 

\begin{thm}
  \label{thm:mean-ergodic-koopman-in-f}
Let the action of a (right) amenable semigroup $G$ on $K$ be uniquely
ergodic with invariant measure $\mu$ and let $\S$ and $\S_2$ be the
  corresponding Koopman representations on $C(K,\C^N)$ and $L^2(K,\C^N,\mu)$, respectively. 
If $\gamma: G\times K\to U(N)$ is a continuous cocycle and $f\in
C(K,\C^N)$ is given, then the following assertions are equivalent.
\begin{enumerate}[(1)]
\item $P_{\Fix\gamma\S_2}f\in \Fix\gamma\S$.
\item $\gamma\S$ is mean ergodic on $Y_f$ with mean ergodic projection $P_\gamma$.
\item $\Fix\gamma\S|_{Y_f}$ separates $\Fix(\gamma\S)|_{Y_f}'$.
\item $f\in \Fix\gamma\S\oplus\ol{\lin}\rg(I-\gamma\S)$.
\item $A_\a^{\gamma\S} f$ converges weakly (to a fixed point of
  $\gamma\S$) for some/every strong (right) $\gamma\S$-ergodic
  net $(A_\a^{\gamma\S})$.
\item $A_\a^{\gamma\S} f$ converges strongly (to a fixed point of
  $\gamma\S$) for some/every strong (right) $\gamma\S$-ergodic net
  $(A_\a^{\gamma\S})$. 
\end{enumerate}
The limit $P_\gamma$ of the nets $A_\a^{\gamma\S}$ in the strong (weak, resp.) operator
topology on $Y_f$ is the mean ergodic projection of
$\gamma\S|_{Y_f}$ mapping $Y_f$ onto $\Fix\gamma\S|_{Y_f}$ along
$\ol{\lin}\rg(I-\gamma\S|_{Y_f})$.
\end{thm}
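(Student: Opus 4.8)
The plan is to mirror the proof of Theorem~\ref{thm:mean-ergodic-in-f} almost verbatim, replacing the scalar character $\chi$ by the unitary cocycle $\gamma$ and $C(K)$ by $C(K,\C^N)$ throughout. The equivalence of (2)--(6) is not really an ergodic-theoretic statement at all: it is the abstract characterization of mean ergodicity of a bounded (right) amenable semigroup restricted to a closed invariant subspace, so it follows directly from Proposition~1.11 in \cite{schreiber12} applied to $\gamma\S|_{Y_f}$ (using that $\gamma\S$ is a bounded representation of the amenable semigroup $G$ on $C(K,\C^N)$, since each $\gamma(g,\cdot)S_g$ is a contraction). So the real content is the equivalence of (1) with these, and I would establish it via the two implications (6)$\imp$(1) and (1)$\imp$(4).

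For (6)$\imp$(1): a strong $\gamma\S$-ergodic net $(A_\a^{\gamma\S})$ in $C(K,\C^N)$, when extended to $L^2(K,\C^N,\mu)$, is a strong $\gamma\S_2$-ergodic net there (the operators lie in $\ol{\co}\,\gamma\S$ and the asymptotic invariance passes to the $L^2$-closure), so by von Neumann's mean ergodic theorem $A_\a^{\gamma\S}f \to P_{\Fix\gamma\S_2}f$ in $L^2(K,\C^N,\mu)$. If $A_\a^{\gamma\S}f$ also converges in $C(K,\C^N)$, the two limits agree $\mu$-a.e., so $P_{\Fix\gamma\S_2}f$ has a continuous representative, which is then automatically fixed by $\gamma\S$ (being the uniform limit of elements of $\ol{\co}\,\gamma\S f$, asymptotically $\gamma\S$-invariant), giving (1).

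For (1)$\imp$(4): take $\nu\in C(K,\C^N)'$ vanishing on $\Fix\gamma\S\oplus\ol{\lin}\rg(I-\gamma\S)$; since $\nu$ annihilates $\rg(I-\gamma\S)$ one gets $\nu\in\Fix(\gamma\S)'$, so by Lemma~\ref{lemma:Fixraum_dualer_Fixraum_cocycles} there is $h\in\Fix(\gamma\S_2)^*$ with $\nu=\ol{h}d\mu$. Pick a strong $\gamma\S_2$-ergodic net $(A_\a^{\gamma\S_2})$ on $L^2(K,\C^N,\mu)$; then $(A_\a^{\gamma\S_2})^*h=h$, and von Neumann gives
$$\sk{\nu,f}=\sk{f,h}_{L^2}=\sk{A_\a^{\gamma\S_2}f,h}_{L^2}\to \sk{P_{\Fix\gamma\S_2}f,h}_{L^2}=\sk{\nu,P_{\Fix\gamma\S_2}f}=0,$$
where the last equality uses $P_{\Fix\gamma\S_2}f\in\Fix\gamma\S$ from (1) together with $\nu$ vanishing on $\Fix\gamma\S$. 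Since $\Fix\gamma\S\oplus\ol{\lin}\rg(I-\gamma\S)$ is closed (Theorem~1.9 in Krengel~\cite[Chap.~2]{krengel85}, applied to the bounded amenable semigroup $\gamma\S$), the Hahn--Banach theorem yields $f\in\Fix\gamma\S\oplus\ol{\lin}\rg(I-\gamma\S)$, i.e.\ (4). The identification of the limit $P_\gamma$ as the stated projection is then read off from Proposition~1.11 in \cite{schreiber12}.

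The only place where the vector-valued setting genuinely enters, rather than being a cosmetic substitution, is the invocation of Lemma~\ref{lemma:Fixraum_dualer_Fixraum_cocycles}, which already packages the delicate part (unique ergodicity forces $|\nu|_2\in\C\cdot\mu$, hence absolute continuity of each component, hence Radon--Nikod\'ym). I therefore do not expect a serious obstacle: the main point to be careful about is that a strong $\gamma\S$-ergodic net on $C(K,\C^N)$ restricts/extends to a strong $\gamma\S_2$-ergodic net on $L^2(K,\C^N,\mu)$ so that von Neumann's theorem is legitimately applicable, and that all the abstract mean-ergodicity machinery from \cite{schreiber12} applies because $\gamma\S$ is a bounded representation of the amenable $G$ and the relevant fixed-space decomposition is closed.
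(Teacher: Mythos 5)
Your proposal is correct and follows essentially the same route as the paper: the equivalence of (2)--(6) via Proposition~1.11 of the cited reference, (6)$\Rightarrow$(1) by comparing the $C(K,\C^N)$-limit with the von Neumann limit in $L^2(K,\C^N,\mu)$, and (1)$\Rightarrow$(4) via Lemma~\ref{lemma:Fixraum_dualer_Fixraum_cocycles} together with the Hahn--Banach theorem and the closedness of $\Fix\gamma\S\oplus\ol{\lin}\rg(I-\gamma\S)$. The extra care you take in checking that ergodic nets pass to $L^2$ and that the continuous representative is actually fixed is implicit in the paper's argument but entirely consistent with it.
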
 
\begin{proof}
  The equivalence of the statements (2) to (6) follows directly from
\cite[Proposition 1.11]{schreiber12}.

(6)$\imp$(1): By von Neumann's Ergodic Theorem $P_{\Fix\gamma\S_2}f$
is the limit of $A_\a^{\gamma\S} f$ in $L^2(K,\C^N,\mu)$. 
If $A_\a^{\gamma\S} f$ converges strongly in $C(K,\C^N)$ then the limits
coincide almost everywhere and hence $P_{\Fix\gamma\S_2}f$ has a
continuous representative in $\Fix\gamma\S$.

(1)$\imp$(4): Let $\nu\in C(K,\C^N)'$ vanish on
$\Fix\gamma\S\oplus\ol{\lin}\rg(I-\gamma\S)$. Then in particular
$\sk{\nu,h}=\sk{\nu,\gamma(g,\cdot)S_g h}=\sk{(\gamma(g,\cdot)S_g)' \nu, h}$ for all
$h\in C(K,\C^N)$ and $g\in G$ and thus $\nu\in \Fix(\gamma\S)'$. Hence by Lemma
\ref{lemma:Fixraum_dualer_Fixraum_cocycles} there exists $h\in
\Fix(\gamma\S_2)^*$ such that $\nu=\ol{h} d\mu$. 
Let $(A_\a^{\gamma\S_2})_{\a\in\mathcal{A}}$ be a strong
$\gamma\S_2$-ergodic net on $L^2(K,\C^N\mu)$. 
Then $(A_\a^{\gamma\S_2})^* h=h$ for all $\a\in\mathcal{A}$ and von
Neumann's Ergodic Theorem implies
$$\sk{\nu,f}=\sk{f,h}_{L^2}=\sk{A_\a^{\gamma\S_2}
  f,h}_{L^2}\to \langle  \underbrace{P_{\Fix\gamma\S_2}f}_{\in
    C(K,\C^N)},h\rangle_{L^2}= \langle\nu,\underbrace{P_{\Fix\gamma\S_2}f}_{\in
    \Fix\gamma\S}\rangle=0.$$
Hence the Hahn-Banach Theorem yields $f\in
\Fix\gamma\S\oplus\ol{\lin}\rg(I-\gamma\S)$, since
$\Fix\gamma\S\oplus\ol{\lin}\rg(I-\gamma\S)$ is closed by Theorem~1.9 in
Krengel \cite[Chap.~2]{krengel85}.
\end{proof}

\begin{remark}
\label{rem:fixraum}
  Notice that if $P_{\Fix\gamma\S_2}f=0$ in the situation of
  Theorem~\ref{thm:mean-ergodic-koopman-in-f} then $\gamma\S$ is mean
  ergodic on $Y_f$ with mean ergodic projection $P_\gamma=0$. 
  This observation
  then directly implies the notable fact that if
  $\Fix\gamma\S_2=\{0\}$, then $\Fix\gamma\S=\{0\}$.
\end{remark}

Analogously to Corollary~\ref{cor:uniform-convergence_C_K} we consider
the semigroups $\gamma\S$ for $\gamma\in\Lambda\subseteq \Gamma(G\times
K, Z)$, where $Z$ is a compact subgroup of $U(N)$, and ask when
$A_\a^{\gamma\S}f$ converges uniformly in $\gamma\in\Lambda$ for a
uniform family of right ergodic nets $\{(A_\a^{\gamma\S})_{\a\in \mathcal{A}}:
\gamma\in \Lambda\}$ on $C(K,\C^N)$ and a given $f\in C(K,\C^N)$. 
If $\gamma\S$ is a mean ergodic semigroup on $\ol{\lin}\gamma\S f$, we denote by
$P_\gamma$ its mean ergodic projection.
The following corollary is a cocycle version of Theorem~2 in
\cite{lenz09} for amenable semigroups.

\begin{cor}
  \label{cor:uniform-convergence_Walters}
Let the action of a right amenable semigroup $G$ on $K$ be uniquely
ergodic with invariant measure $\mu$ and let $\S$ and $\S_2$ be the
  corresponding Koopman representations on $C(K,\C^N)$ and $L^2(K,\C^N,\mu)$, respectively. 
Assume that $\Lambda\subseteq
\Gamma(G\times K, Z)$ is compact and consider the semigroups
$\gamma\S$ on $C(K,\C^N)$ for each $\gamma\in\Lambda$. If
$\{(A_\a^{\gamma\S})_{\a\in \mathcal{A}}: \gamma\in \Lambda\}$ is a uniform
family of right ergodic nets on $C(K,\C^N)$ and if $f\in C(K,\C^N)$ satisfies
\begin{enumerate}[(1)]
\item $P_{\Fix\gamma\S_2}f\in \Fix\gamma\S$ for each $\gamma\in \Lambda$ and
\item $\Lambda \to \R_+, \gamma\mapsto\|A_\a^{\gamma\S}f-
  P_\gamma f\|$ is continuous for each $\a\in\mathcal{A}$,
\end{enumerate}
then 
$$\lim_\a\sup_{\gamma\in \Lambda}\|A_\a^{\gamma\S}f-P_\gamma f\|= 0.$$ 
\end{cor}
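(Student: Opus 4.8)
The strategy is to reduce the statement to the corresponding general result about uniform families of ergodic nets from \cite{schreiber12}, exactly as was done for Corollary~\ref{cor:uniform-convergence_C_K}. First I would observe that, under hypothesis (1), Theorem~\ref{thm:mean-ergodic-koopman-in-f} (the implication $(1)\imp(2)$) applies for each $\gamma\in\Lambda$: the condition $P_{\Fix\gamma\S_2}f\in\Fix\gamma\S$ guarantees that $\gamma\S$ is mean ergodic on $Y_f=\ol{\lin}\gamma\S f$ with mean ergodic projection $P_\gamma$, and moreover $A_\a^{\gamma\S}f\to P_\gamma f$ in the supremum norm of $C(K,\C^N)$ for the given ergodic nets. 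This establishes pointwise convergence in $\gamma$; what remains is to upgrade it to uniformity over the compact set $\Lambda$.

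Next I would invoke the abstract uniform-convergence theorem, Theorem~2.4 in \cite{schreiber12}, whose hypotheses are precisely: a compact index set (here $\Lambda$), a uniformly bounded family of semigroups (the $\gamma\S$ for $\gamma\in\Lambda$ are Koopman operators, hence contractions, so uniformly bounded), a uniform family of right ergodic nets (hypothesis on $\{(A_\a^{\gamma\S})_{\a}\}$), mean ergodicity on the relevant subspace for each index (just established via Theorem~\ref{thm:mean-ergodic-koopman-in-f}), and the continuity in the index of the maps $\gamma\mapsto\|A_\a^{\gamma\S}f-P_\gamma f\|$ for each fixed $\a$ (hypothesis (2)). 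Feeding these into that theorem yields directly
$$\lim_\a\sup_{\gamma\in\Lambda}\|A_\a^{\gamma\S}f-P_\gamma f\|=0,$$
which is the assertion.

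The only point requiring a little care, and the place I expect the main (though still modest) obstacle, is checking that the framework of \cite{schreiber12}, which is phrased for semigroups on a general Banach space, really applies verbatim to $C(K,\C^N)$ with the cocycle-twisted Koopman semigroups $\gamma\S$: one must confirm that $\gamma\S$ is indeed a bounded representation of $G$ in the sense required (boundedness is immediate since $\gamma(g,x)\in U(N)$ makes each $\gamma(g,\cdot)S_g$ an isometry, and the cocycle equation gives the representation property together with the composition rule $S_{g_1}S_{g_2}=S_{g_2g_1}$), and that right amenability of $G$ transfers to $\gamma\S$ as noted earlier in the paper. Once this bookkeeping is in place the proof is a one-line citation, so the write-up should be correspondingly short: apply Theorem~\ref{thm:mean-ergodic-koopman-in-f} to get mean ergodicity on $\ol{\lin}\gamma\S f$ for every $\gamma\in\Lambda$, then quote Theorem~2.4 of \cite{schreiber12}.
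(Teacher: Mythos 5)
Your proposal matches the paper's proof: hypothesis (1) together with Theorem~\ref{thm:mean-ergodic-koopman-in-f} gives mean ergodicity of $\gamma\S$ on $\ol{\lin}\gamma\S f$ for each $\gamma\in\Lambda$, after which the conclusion is a direct citation of Theorem~2.4 in \cite{schreiber12}. The extra bookkeeping you mention (each $\gamma(g,\cdot)S_g$ is an isometry, so $\gamma\S$ is a bounded right amenable representation) is correct and consistent with what the paper takes for granted.
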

\begin{proof}
By Theorem~\ref{thm:mean-ergodic-koopman-in-f} and our
 hypotheses the semigroup $\gamma\S$ is mean ergodic on $\ol{\lin}\gamma\S f$ for each
 $\gamma\in\Lambda$.
The result then follows directly from Theorem~2.4 in \cite{schreiber12}.
\end{proof}

In order to show the analogue of Corollary~\ref{cor:assani} for cocycles we need a lemma.

\begin{lemma}
  \label{lemma:uniform-family}
 Let $H$ be a locally compact group with left Haar measure $|\cdot|$ and
  suppose that $G\subset H$ is a subsemigroup acting on $K$ such that there exists a F\o
  lner net $(F_\a)_{\a\in\A}$ in $G$. 
Consider the semigroups $\gamma\S$ on $C(K,\C^N)$ for each $\gamma$ in
a compact set $\Lambda\subset \Gamma(G\times K, Z)$.
Then $\{(A_\a^{\gamma\S})_{\a\in\A}: \gamma\in\Lambda\}$ defined by
$$A_\a^{\gamma\S}f:=\frac{1}{|F_\a|}\int_{F_\a}\gamma(g,\cdot)S_g
f\,dg\quad (f\in C(K,\C^N))$$
is a uniform family of right ergodic nets.
\end{lemma}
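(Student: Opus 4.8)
The plan is to verify the two defining conditions of a uniform family of right ergodic nets (Definition~\ref{definition:uniformly-family}) directly, adapting the argument of \cite[Proposition~2.2]{schreiber12} to the twisted Koopman operators $\gamma(g,\cdot)S_g$ on $C(K,\C^N)$. Throughout, observe that since $\Lambda \subset \Gamma(G\times K, Z)$ with $Z \subset U(N)$ compact, each operator $\gamma(g,\cdot)S_g$ is a contraction on $C(K,\C^N)$, so the family $\{\gamma\S : \gamma\in\Lambda\}$ is uniformly bounded; this is what makes Definition~\ref{definition:uniformly-family} applicable.

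First I would treat condition~(2), the uniform asymptotic invariance. Fix $g\in G$ and $f\in C(K,\C^N)$. For each $\gamma\in\Lambda$ one computes
$$A_\a^{\gamma\S}f - S_g A_\a^{\gamma\S}f = \frac{1}{|F_\a|}\int_{F_\a}\gamma(h,\cdot)S_h f\,dh - \frac{1}{|F_\a|}\int_{F_\a} S_g\big(\gamma(h,\cdot)S_h f\big)\,dh,$$
and using the cocycle equation to rewrite $S_g(\gamma(h,\cdot)S_h f) = \gamma(gh,\cdot)S_{gh}f$ (up to the factor coming from $\gamma(gh,x)=\gamma(h,x)\gamma(g,hx)$, which I must track carefully), the difference is a sum of integrals over the symmetric difference $F_\a \triangle gF_\a$ (or $F_\a \triangle F_\a g$, depending on the side), plus a term that is small because $\gamma$ is uniformly continuous on the compact set $\{g\}\times K$. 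Since each summand is bounded in norm by a fixed multiple of $\|f\|$ and the cocycle is unitary-valued, the Følner property $|F_\a \triangle gF_\a|/|F_\a|\to 0$ gives the estimate \emph{uniformly in $\gamma\in\Lambda$}, because the bound does not depend on $\gamma$.

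For condition~(1), given $\a$, $\e>0$ and $f_1,\dots,f_m\in C(K,\C^N)$, I would approximate the vector-valued integral $\frac{1}{|F_\a|}\int_{F_\a}\gamma(g,\cdot)S_g f_k\,dg$ by a Riemann-type sum $\sum_{j=1}^n c_{i,j}\,\gamma(g_j,\cdot)S_{g_j}f_k$ with the \emph{same} nodes $g_1,\dots,g_n\in G$ and weights $c_{i,j}$ chosen uniformly for all $\gamma\in\Lambda$ simultaneously. This is possible because the map $(g,\gamma)\mapsto \gamma(g,\cdot)S_g f_k$ is jointly continuous from $F_\a\times\Lambda$ into $C(K,\C^N)$ (continuity in $g$ from the continuity of the action and of $\gamma$; in $\gamma$ from the metric $\tilde d$ on $\Gamma(G\times K,Z)$), hence uniformly continuous on the compact set $\overline{F_\a}\times\Lambda$ — I may need to note $\overline{F_\a}$ compact, or work with $F_\a$ of finite Haar measure and a standard partition argument — so a single finite partition of $F_\a$ produces approximating convex combinations valid for every $\gamma$ at once.

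The main obstacle I anticipate is bookkeeping in condition~(1): one must produce \emph{one} choice of nodes $g_1,\dots,g_n$ that works for all $\gamma\in\Lambda$ and all $k$, and confirm that the resulting $\sum c_{i,j}\gamma(g_j,\cdot)S_{g_j} \in \co(\gamma\S)$ — i.e., that the weights $c_{i,j}\ge 0$ sum to $1$, which they do as a normalized Haar average over a partition. The compactness of $\Lambda$ together with the uniform metric $\tilde d$ is exactly what converts "for each $\gamma$, pointwise" into "uniformly in $\gamma$", and this is the crux; the Følner computation in condition~(2) is then routine given the unitarity of the cocycle. Once both conditions are checked, the lemma follows.
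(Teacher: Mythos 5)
Your overall strategy is the same as the paper's: compactness of $\Lambda$ in the uniform metric $\tilde d$ gives uniform equicontinuity of $\{g\mapsto\gamma(g,\cdot)S_gf_k:\gamma\in\Lambda\}$ on the compact set $F_\a$, a single finite partition $F_\a=\bigsqcup_j V_j$ then yields the convex combination with weights $|V_j|/|F_\a|$ (which here do not even depend on $\gamma$) for condition (1), and unitarity of the cocycle plus the F\o lner property gives condition (2). Two points need correcting, though neither is fatal. First, there is no ``extra term that is small because $\gamma$ is uniformly continuous'': the cocycle equation makes the composition \emph{exact}. With the convention $S_{g_1}S_{g_2}=S_{g_2g_1}$ one has
$$\bigl(\gamma(g,\cdot)S_g\bigr)\bigl(\gamma(h,\cdot)S_h\bigr)f(x)=\gamma(g,x)\gamma(h,gx)f(hgx)=\gamma(hg,x)f(hgx)=\gamma(hg,\cdot)S_{hg}f(x),$$
identically, so the difference $A_\a^{\gamma\S}f-A_\a^{\gamma\S}\bigl(\gamma(h,\cdot)S_hf\bigr)$ reduces, after the substitution $g\mapsto hg$ (legitimate because $|\cdot|$ is a \emph{left} Haar measure), to an integral over $F_\a\triangle hF_\a$ with no remainder. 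Second, and related: the displayed quantity $A_\a^{\gamma\S}f-S_gA_\a^{\gamma\S}f$ is the \emph{left} asymptotic invariance, whereas the lemma asserts a uniform family of \emph{right} ergodic nets, i.e.\ you must estimate $A_\a^{\gamma\S}f-A_\a^{\gamma\S}\bigl(\gamma(g,\cdot)S_gf\bigr)$. This is not mere bookkeeping: the right condition is the one compatible with left-invariance of the Haar measure and the left F\o lner property $|F_\a\triangle hF_\a|/|F_\a|\to0$; the left condition would instead produce a right translate $F_\a g$ and a modular factor, and does not follow from the stated hypotheses. Once you verify the right condition (exactly as in your symmetric-difference estimate, with the bound $\|\gamma(g,\cdot)S_gf\|\le\|f\|$ uniform in $\gamma$), the proof is complete and coincides with the paper's.
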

\begin{proof}
(1): Let $\a\in \A$, $\e>0$ and $f_1,\dots, f_m\in C(K,\C^N)$. 
Since $\Lambda\subset \Gamma(G\times K, Z)$ is compact the family $\{g\mapsto \gamma(g,\cdot)S_gf_k:
  \gamma\in\Lambda\}$ is uniformly equicontinuous on the compact set $F_\a$ for each $k\in\{1,\dots, m\}$. Hence for each $k\in\{1,\dots, m\}$ we can
choose an open neighbourhood $U_k$ of the unity of $H$
  satisfying 
$$g,h\in G, \en h^{-1}g\in U_k \en \imp\en \sup_{\gamma\in
  \Lambda}\|\gamma(g,\cdot)S_g f_k-\gamma(h,\cdot)S_h f_k\|<\e.$$
Then $U:=\bigcap_{k=1}^m U_k$ is still an open neighbourhood of
unity. Since $F_\a$ is compact there exists $g_1,\dots, g_n\in F_\a$
such that $F_\a\subset\bigcup_{j=1}^n g_j U$. Defining $V_1:=g_1 U\cap
F_\a$ and $V_j:=(g_j U\cap F_\a) \setminus V_{j-1}$ for $j=2,\dots, n$
we obtain a disjoint union $F_\a=\bigcup_{j=1}^n V_j.$
Hence for all $\gamma\in \Lambda$ and $k\in \{1,\dots, m\}$ we have
\begin{align*}
  \left\| \frac{1}{|F_\a|}\int_{F_\a}\gamma(g,\cdot)S_g f_k\,
    dg-\right . &\left .\sum_{j=1}^n \frac{|V_j|}{|F_\a|} \gamma(g_j,\cdot) S_{g_j}f_k
  \right\|\\
&\le \frac{1}{|F_\a|}\sum_{j=1}^n
  \int_{V_j}\underbrace{\|\gamma(g,\cdot)S_g f_k-\gamma(g_j,\cdot)S_{g_j} f_k\|}_{<\e}dg\\
&< \frac{1}{|F_\a|}\sum_{j=1}^n |V_j|\e=\e.
\end{align*}

(2): Let $g\in G$ and $f\in C(K,\C^N)$. Then 
$$\|\gamma(g,\cdot)S_g f\|=\sup_{x\in K}\|\gamma(g,x)f(gx)\|_2 =\sup_{x\in
  K}\|f(gx)\|_2\le \|f\|,$$ since $\gamma(g,x)$ is unitary for all
$x\in K$. Hence
\begin{align*}
\sup_{\gamma\in\Lambda}\left\|\frac{1}{|F_\a|}\int_{F_\a}\gamma(g,\cdot))S_gf-\gamma(hg,\cdot)S_{hg}f\,dg\right\|&
  \le  \sup_{\gamma\in\Lambda} \frac{1}{|F_\a|}\int_{F_\a\triangle h F_\a} \|\gamma(g,\cdot) S_g f\|\\
&\le \frac{|F_\a\triangle h F_\a|}{|F_\a|}  \|f\|\to 0.
\end{align*}
\end{proof}

For actions of the semigroup $(\N,+)$ and the F\o lner sequence
$F_n=\{0,1,\dots, n-1\}$ the following corollary 
has been proved by Santos and Walkden \cite[Corollary~4.4]{santos07}
generalizing a previous result of Walters \cite[Theorem~5]{walters96},
who considered the case $N=1$. 
\begin{cor}
\label{cor:santos-walkden}
  Let $H$ be a locally compact group with left Haar measure $|\cdot|$ and
  suppose that $G\subset H$ is a subsemigroup such that there exists a F\o
  lner net $(F_\a)_{\a\in\A}$ in $G$. If the action of $G$ on $K$ is
  uniquely ergodic with invariant measure $\mu$ and if $f\in
  C(K,\C^N)$ satisfies $P_{\Fix\gamma\S_2}f=0$ for 
  all $\gamma$ in a compact set $\Lambda\subset\Gamma(G\times K, Z)$, then 
$$\lim_{\a}\sup_{\gamma\in\Lambda}\left\|\frac{1}{|F_\a|}\int_{F_\a}\gamma(g,\cdot)S_g
  f \;dg\right\|=0.$$
\end{cor}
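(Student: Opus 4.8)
The plan is to transcribe the proof of Corollary~\ref{cor:assani}, replacing the character $\chi$ by the cocycle $\gamma$ and the scalar results of Section~\ref{sec:ww} by their cocycle counterparts from the present section. First, since $(F_\a)_{\a\in\A}$ is a F\o lner net in $G$, the semigroup $G$, and hence its Koopman representation $\S$ on $C(K,\C^N)$, is right amenable. By Lemma~\ref{lemma:uniform-family}, the family $\{(A_\a^{\gamma\S})_{\a\in\A}:\gamma\in\Lambda\}$ given by
$$A_\a^{\gamma\S}f=\frac{1}{|F_\a|}\int_{F_\a}\gamma(g,\cdot)S_g f\,dg$$
is a uniform family of right ergodic nets on $C(K,\C^N)$, so the machinery of Corollary~\ref{cor:uniform-convergence_Walters} applies.

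Next I would verify the two hypotheses of Corollary~\ref{cor:uniform-convergence_Walters} for the given $f$. The assumption $P_{\Fix\gamma\S_2}f=0$ for all $\gamma\in\Lambda$, combined with Remark~\ref{rem:fixraum}, shows that $\gamma\S$ is mean ergodic on $Y_f=\ol{\lin}\gamma\S f$ with mean ergodic projection $P_\gamma=0$; in particular $P_{\Fix\gamma\S_2}f=0\in\Fix\gamma\S$, which is hypothesis~(1). Since $P_\gamma=0$, the map in hypothesis~(2) reduces to $\gamma\mapsto\|A_\a^{\gamma\S}f\|$, so it only remains to check that $\gamma\mapsto A_\a^{\gamma\S}f$, and hence its norm, is continuous on $\Lambda$ for each fixed $\a$.

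For that continuity I would use that $Z$ is a compact metric subgroup of $U(N)$, so the inclusion $Z\hookrightarrow\L(\C^N)$ is continuous on a compact space and therefore uniformly continuous; given $\gamma_1,\gamma_2\in\Gamma(G\times K,Z)$ one then bounds
$$\|A_\a^{\gamma_1\S}f-A_\a^{\gamma_2\S}f\|\le\frac{1}{|F_\a|}\int_{F_\a}\sup_{x\in K}\|(\gamma_1(g,x)-\gamma_2(g,x))f(gx)\|_2\,dg,$$
and the right-hand side tends to $0$ as $\tilde{d}(\gamma_1,\gamma_2)\to0$, uniformly in $\a$. With both hypotheses verified, Corollary~\ref{cor:uniform-convergence_Walters} yields
$$\lim_\a\sup_{\gamma\in\Lambda}\left\|\frac{1}{|F_\a|}\int_{F_\a}\gamma(g,\cdot)S_g f\,dg\right\|=\lim_\a\sup_{\gamma\in\Lambda}\|A_\a^{\gamma\S}f-P_\gamma f\|=0,$$
which is the assertion. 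I do not expect a genuine obstacle: the only step requiring a moment's thought is the uniform-continuity estimate for the inclusion $Z\hookrightarrow\L(\C^N)$, and everything else is a line-by-line translation of the proof of Corollary~\ref{cor:assani}.
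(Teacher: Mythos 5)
Your proposal is correct and follows essentially the same route as the paper: Følner net gives right amenability, Lemma~\ref{lemma:uniform-family} supplies the uniform family of right ergodic nets, Remark~\ref{rem:fixraum} turns $P_{\Fix\gamma\S_2}f=0$ into $P_\gamma=0$, and Corollary~\ref{cor:uniform-convergence_Walters} finishes. You merely spell out the continuity of $\gamma\mapsto A_\a^{\gamma\S}f$, which the paper asserts without detail; your estimate for it is sound.
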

\begin{proof}
   If $(F_\a)$ is a F\o lner net in $G$, then $G$ is right
   amenable. By Lemma~\ref{lemma:uniform-family} the set
$\{(A_\a^{\gamma\S})_{\a\in\A}: \gamma\in\Lambda\}$ defined by
$$A_\a^{\gamma\S}\psi:=\frac{1}{|F_\a|}\int_{F_\a}\gamma(g,\cdot)S_g
\psi\,dg\quad (\psi\in C(K,\C^N))$$
is a uniform family of right ergodic nets on $C(K,\C^N)$.
If $P_{\Fix\gamma\S_2}f=0$ for all
$\gamma\in\Lambda$, then the conditions (1) and (2) of
Corollary~\ref{cor:uniform-convergence_Walters} are satisfied since
the map 
$\gamma\mapsto \frac{1}{|F_\a|}\int_{F_\a}\gamma(g,\cdot)S_g
  f dg$ is continuous. Hence 
$$\lim_{\a}\sup_{\gamma\in\Lambda}\left\|\frac{1}{|F_\a|}\int_{F_\a}\gamma(g,\cdot)S_g
  f \;dg\right\|=0.$$
\end{proof}

\section{Mean ergodicity on group extensions}

In this section we characterize mean ergodicity of semigroups of Koopman
operators associated to skew product actions on compact
group extensions.


Let $G$ be an amenable semigroup acting on a compact space $K$ and assume that $\mu$ is a
$G$-invariant probability measure on $K$. Suppose that $\Omega$ is a
compact group with Haar measure $\eta$ 
and $\gamma:G\times K\to\Omega$ is a
continuous cocycle. We define the \emph{skew product action} of $G$ on
$K\times \Omega$ by 
$$g(x,\omega):=(gx, \omega \gamma(g,x)),\quad (x,\omega)\in
 K\times\Omega, g\in G.$$
The cocycle equation implies that this is indeed a semigroup action
and one checks that the product measure $\mu\times\eta$ on $K\times\Omega$ 
is $G$-invariant. 
We denote by $\T=\{T_g: g\in G\}$ the Koopman
representation of this action on $C(K\times\Omega)$ and by
$\T_2=\{T_{g,2}: g\in G\}$ its extension to $L^2(K\times\Omega,\mu\times \eta)$.

In order to study the mean ergodicity of $\T$, we need some harmonic analysis. 
Denote by $\widehat{\Omega}$ the set of
irreducible representations of $\Omega$ and by $\Rep(\Omega,N)$ the set
of irreducible $N$-dimensional representations of $\Omega$ (see e.g. Folland
\cite[Chap.~3.1]{folland95}). 
Notice that $\widehat{\Omega}=\bigcup_{N\in\N}\Rep(\Omega,N)$ by
\cite[Theorem~7.2.4]{deitmar09}. If $\pi\in \Rep(\Omega,N)$, we can
choose an inner product on $\C^N$ such  that $\pi$ becomes unitary
(see \cite[Lemma 7.1.1]{deitmar09}). 
Hence, we may
always assume that each finite dimensional representation is unitary.
 For $\pi\in\Rep(\Omega,N)$ and for a fixed orthonormal basis $\{e_1,\dots, e_N\}$ in $\C^N$ the maps
$\pi_{i,j}\in C(\Omega)$ defined by 
$\pi_{i,j}(\omega)=\sk{\pi(\omega)e_i, e_j}$ 
are called the \emph{matrix elements} of $\pi$. 

On $C(K,\C^N)$ we consider the Koopman representation $\S^{(N)}=\{S_g: g\in
G\}$  of $G$. 
If $\gamma:G\times K\to\Omega$ is a continuous cocycle and $\pi\in
\Rep(\Omega, N)$ is an $N$-dimensional representation of $G$ then
$\pi\circ\gamma: G\times K\to U(N)$ is a continuous cocycle into the
group of unitary operators on $\C^N$. 
Hence, in accordance with the notation from Section~\ref{sec:koopman}
for $g\in G$ the operator
$(\pi\circ\gamma)(g,\cdot) S_g$ is defined by 
$$(\pi\circ\gamma)(g,\cdot) S_g f(x)=\pi(\gamma(g,x))f(gx),\quad f\in
C(K,\C^N), x\in K.$$
We denote by $(\pi\circ\gamma)\S^{(N)}:=\{(\pi\circ\gamma)(g,\cdot)S_g: g\in G\}$ the
corresponding semigroup and by $(\pi\circ\gamma)\S_2^{(N)}$ its
extension to $L^2(K,\C^N,\mu)$.

\begin{thm}
 \label{thm:mean-ergodic-group-extension}
Let $G$ be a right amenable semigroup acting on $K$ and suppose that $\Omega$ is a
compact group and $\gamma:G\times K\to\Omega$  a continuous
cocycle. 
For the Koopman representations $\S^{(N)}$ of $G$ on $C(K,\C^N)$ and the
Koopman representation $\T$ of the skew product action on
$C(K\times\Omega)$ the following assertions are equivalent.
\begin{enumerate}[(1)]
\item $\T$ is mean ergodic on $C(K\times\Omega)$.
\item $(\pi\circ \gamma)\S^{(N)}$ is mean ergodic on
  $C(K,\C^N)$ for each $\pi\in\Rep(\Omega,N)$ and each $N\in\N$.
\end{enumerate}
\end{thm}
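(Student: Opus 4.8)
The plan is to decompose the Koopman representation $\T$ on $C(K\times\Omega)$ according to the action of $\Omega$ on the second coordinate by right translation, using the Peter--Weyl theorem, and then to identify each isotypic component with a space of the form $C(K,\C^N)^{\oplus N}$ carrying the semigroup $(\pi\circ\gamma)\S^{(N)}$. Concretely, for $\pi\in\Rep(\Omega,N)$ with matrix elements $\pi_{i,j}$, the functions on $K\times\Omega$ of the form $(x,\omega)\mapsto f_i(x)\pi_{i,j}(\omega)$ span a closed $\T$-invariant subspace $E_\pi$ of $C(K\times\Omega)$ (that this subspace is $\T$-invariant uses the cocycle equation together with the multiplicativity of $\pi$, exactly as in the computation preceding the theorem). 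On this subspace, grouping the $j$-index, $\T$ acts like $N$ copies of $(\pi\circ\gamma)(g,\cdot)S_g$ acting on $f=(f_1,\dots,f_N)\in C(K,\C^N)$. Thus $\T|_{E_\pi}$ is mean ergodic on $E_\pi$ if and only if $(\pi\circ\gamma)\S^{(N)}$ is mean ergodic on $C(K,\C^N)$.

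The next step is to assemble these pieces. By Peter--Weyl, the linear span of all matrix elements $\{\pi_{i,j}: \pi\in\widehat\Omega\}$ is dense in $C(\Omega)$ (uniformly), and hence $\bigcup_\pi C(K)\otimes\lin\{\pi_{i,j}\}$, i.e. the algebraic span of the $E_\pi$, is dense in $C(K\times\Omega)$ by Stone--Weierstrass. So $C(K\times\Omega)=\ol{\lin}\bigcup_{\pi}E_\pi$. For the implication (2)$\imp$(1): if each $(\pi\circ\gamma)\S^{(N)}$ is mean ergodic, then on the dense subspace $\bigcup_\pi E_\pi$ the ergodic averages $A_\a^\T$ converge; since $\T$ is a bounded (in fact contractive Markov) representation of a right amenable semigroup, a strong $\T$-ergodic net exists, and convergence of $(A_\a^\T g)$ on a dense set together with uniform boundedness gives convergence for all $g\in C(K\times\Omega)$, whence $\T$ is mean ergodic by Proposition~\ref{prop:mean-ergodic}. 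For (1)$\imp$(2): restrict the (weakly convergent) ergodic net of $\T$ to the closed invariant subspace $E_\pi$; since $E_\pi$ is $\T$-invariant and complemented appropriately, or more simply since mean ergodicity passes to closed invariant subspaces on which a strong ergodic net still converges, $\T|_{E_\pi}$ is mean ergodic, hence so is $(\pi\circ\gamma)\S^{(N)}$ on $C(K,\C^N)$.

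To make the passage to subspaces rigorous it is cleanest to invoke Theorem~\ref{thm:mean-ergodic-in-f} / the $Y_f$-versions: for $f\in C(K,\C^N)$ the space $Y_f=\ol{\lin}(\pi\circ\gamma)\S^{(N)}f$ embeds as $\ol{\lin}\T\tilde f$ for a suitable $\tilde f\in C(K\times\Omega)$ built from $f$ and the matrix elements of $\pi$, and mean ergodicity of $\T$ on all such $Y_{\tilde f}$ (which follows from global mean ergodicity of $\T$) translates into mean ergodicity of $(\pi\circ\gamma)\S^{(N)}$ on every $Y_f$, hence on all of $C(K,\C^N)$.

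The main obstacle I expect is the bookkeeping in the identification $\T|_{E_\pi}\cong \big((\pi\circ\gamma)\S^{(N)}\big)^{\oplus N}$: one must check carefully that right translation by $\gamma(g,x)$ in the $\Omega$-variable, after expanding $\pi(\omega\gamma(g,x))=\pi(\omega)\pi(\gamma(g,x))$ in the fixed orthonormal basis, produces precisely the matrix multiplication by $\pi(\gamma(g,x))$ that defines $(\pi\circ\gamma)(g,\cdot)S_g$, with the indices matching up so that the $j$-index is a multiplicity index and the $i$-index is the $\C^N$-coordinate. Once this linear-algebra identification is pinned down, everything else is an application of the density argument and Proposition~\ref{prop:mean-ergodic}.
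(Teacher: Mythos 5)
Your proposal is correct, and it rests on the same decomposition as the paper: the intertwining identity $\sum_{i,j}T_g(f_i\otimes\pi_{ij})(x,\omega)e_j=\pi(\omega)\pi(\gamma(g,x))S_gf(x)$ together with the density of the linear span of the matrix elements in $C(K\times\Omega)$. Your direction (2)$\imp$(1) is essentially the paper's argument (convergence of a strong right $\T$-ergodic net on each $f\otimes\pi_{ij}$ via the associated net for $(\pi\circ\gamma)\S^{(N)}$, then density plus uniform boundedness and Proposition~\ref{prop:mean-ergodic}). Where you genuinely diverge is (1)$\imp$(2): you restrict $\T$ to the closed invariant subspace $E_\pi^{(j)}=\{\sum_i f_i\otimes\pi_{ij}: f_i\in C(K)\}$ and transport mean ergodicity through the conjugacy $f\mapsto\sum_i f_i\otimes\pi_{ij}$, whereas the paper builds the zero element $Q$ of $\ol{\co}(\pi\circ\gamma)\S^{(N)}$ directly from the zero element $P$ of $\ol{\co}\T$ by evaluating $P(f_i\otimes\pi_{ij})$ at $\omega=1_\Omega$. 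Your route works, but the ``bookkeeping'' you flag is exactly where the remaining work sits: you must show the map $f\mapsto\sum_i f_i\otimes\pi_{ij}$ is bounded below, i.e.\ that $v\mapsto\sup_{\omega}|\sk{\pi(\omega)v,e_j}|$ is a norm on $\C^N$ (true because $\{\pi(\omega)^{-1}e_j:\omega\in\Omega\}$ spans $\C^N$ by irreducibility of $\pi$), so that $E_\pi^{(j)}$ is closed and the identification is a topological isomorphism; you also need the routine fact that the restriction of the mean ergodic projection to a closed invariant subspace is again a zero element there. The paper's evaluation-at-$1_\Omega$ device buys exactly the avoidance of these verifications: it never needs the isotypic subspaces to be closed or the identification to be bounded below. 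Conversely, your formulation makes the structural content (an isomorphism $\T|_{E_\pi}\cong\bigl((\pi\circ\gamma)\S^{(N)}\bigr)^{\oplus N}$) more transparent.
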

\begin{proof}
For a fixed
orthonormal basis $\{e_1,\dots, e_N\}$ of $\C^N$ every $f\in
C(K,\C^N)$ can be written as $f=\sum_{i=1}^N f_i e_i$ for functions
$f_i\in C(K)$. 
Therefore, for each $\pi\in\Rep(\Omega,N), (x,\omega)\in K\times \Omega$ and
$g\in G$ we have 
\begin{align*}
  \sum_{i=1}^N\sum_{j=1}^N T_{g}(f_i\otimes
\pi_{ij})(x,\omega)e_j&= \sum_{i=1}^N\sum_{j=1}^N f_i(gx)
\langle\pi(\omega\gamma(g,x))e_i,e_j\rangle e_j\\
&= \sum_{i=1}^N \pi(\omega)\pi(\gamma(g,x)) f_i(gx)e_i\\
&=\pi(\omega)\pi(\gamma(g,x))S_gf (x).
\end{align*}

(1)$\imp$(2): Take $P\in\ol{\co}\T$ such that $T_gP=PT_g=P$ for all
$g\in G$ and $\pi\in\Rep(\Omega,N)$ for some $N\in\N$. 
Define the operator $Q$ on $C(K,\C^N)$ by 
$$Qf(x):= \sum_{i=1}^N\sum_{j=1}^N P(f_i\otimes
\pi_{ij})(x,1_\Omega)e_j,\quad (f\in C(K,\C^N), x\in K)$$
where $1_\Omega$ is the unit element of $\Omega$. 
We claim that $Q$ is the mean ergodic projection of $(\pi\circ \gamma)\S^{(N)}$.
Since $P\in\ol{\co}\T$, there exists a net
$(\sum_{n=1}^{N_\a}\lambda_{n,\a}T_{g_n})_\a\subseteq\co\T$ with $PF=\lim_\a \sum_{n=1}^{N_\a}\lambda_{n,\a}T_{g_n}F$ for all
$F\in C(K\times\Omega)$.   
For every $f\in C(K,\C^N)$ we thus obtain 
\begin{align*}
  Qf(x)&=\lim_{\a}\sum_{n=1}^{N_\a}\lambda_{n,\a}\sum_{i=1}^N\sum_{j=1}^N T_{g_n}(f_i\otimes
\pi_{ij})(x,1_\Omega)e_j\\
&=\lim_{\a}\sum_{n=1}^{N_\a}\lambda_{n,\a}\pi(1_\Omega)\pi(\gamma(g_n,x))S_{g_n}f(x) ,
\end{align*}
where the limit is uniform in $x\in K$. This yields
$$Qf=\lim_{\a}\sum_{n=1}^{N_\a}\lambda_{n,\a}(\pi\circ\gamma)(g_n,\cdot)S_{g_n}f$$ 
for all $f\in C(K,\C^N)$ and hence $Q\in\ol{\co}(\pi\circ\gamma)\S^{(N)}$. 

To see that $Q$ is a null element of $\ol{\co}(\pi\circ\gamma)\S^{(N)}$
let $g\in G$ and $f\in C(K,\C^N)$. We then have
\begin{align*}
  Q((\pi\circ\gamma)(g,\cdot) S_gf) 
 &=\lim_{\a}\sum_{n=1}^{N_\a}\lambda_{n,\a}((\pi\circ\gamma)(g_k,\cdot)S_{g_k})
 (\pi\circ\gamma)(g,\cdot) S_gf\\
&=\lim_{\a}\sum_{n=1}^{N_\a}\lambda_{n,\a}(\pi\circ\gamma)(gg_k,\cdot)S_{gg_k}f\\
&=\lim_{\a}\sum_{n=1}^{N_\a}\lambda_{n,\a}\sum_{i=1}^N\sum_{j=1}^NT_{gg_k}(f_i\otimes\pi_{ij})(\cdot,1_\Omega)e_j\\
&=\sum_{i=1}^N\sum_{j=1}^N \underbrace{PT_g}_{=P}
(f_i\otimes\pi_{ij})(\cdot,1_\Omega)e_j\\
&=Qf.
\end{align*}
Analogously, one verifies $(\pi\circ\gamma)(g,\cdot) S_g Qf=Qf$ and
hence $Q$ is a null element of $\ol{\co}(\pi\circ\gamma)\S^{(N)}$.

(2)$\imp$(1):
Take a strong right $\T$-ergodic net $(A_\a)_{\a\in\mathcal{A}}$ on
$C(K\times\Omega)$ with $A_\a\in \co\T$ for
all $\a\in\mathcal{A}$ (cf. the proof of
Proposition~1.3 and
Theorem~1.4 in \cite{schreiber12}). 
So suppose that $A_\a=\sum_{n=1}^{N_\a}\lambda_{n,\a}T_{g_n}\in\co\T$
for all $\a\in\mathcal{A}$. 
For $\T$ to be mean ergodic we need to show that $(A_\a F)$ converges
strongly to a fixed point of $\T$ for every $F\in C(K\times\Omega)$. 
By \cite[Theorem~5.11]{folland95} the linear span of the matrix elements is
dense in $C(\Omega)$. Since $C(K)\otimes C(\Omega)$ is dense in
$C(K\times\Omega)$ and the $A_\a$ are linear and uniformly bounded, it
thus suffices to show that $A_\a(f\otimes\pi_{ij})$ converges for
every $f\in C(K)$, $\pi\in\Rep(\Omega,N)$ and $i,j\in\{1,\dots,N\}$ for each $N\in\N$.
So take $f\in C(K)$, $\pi\in\Rep(\Omega,N)$ and fix an orthonormal
basis $\{e_1,\dots,e_N\}$ of $\C^N$. 
For each $(x,\omega)\in K\times\Omega$ and $i,j\in\{1,\dots, N\}$ we have
\begin{align*}
  A_\a(f\otimes\pi_{ij})(x,\omega)&=\sum_{n=1}^{N_\a}\lambda_{n,\a}(f\otimes\pi_{ij})(g_n
  x,\omega\gamma(g_n,x))\\
&=\sum_{n=1}^{N_\a}\lambda_{n,\a}f(g_n x)\sk{\pi(\omega)\pi(\gamma(g_n,x))e_i,e_j}\\
&=\left\langle\pi(\omega) \underbrace{\sum_{n=1}^{N_\a}\lambda_{n,\a}(\pi(\gamma(g_n,\cdot))S_{g_n})}_{=:B_\a}f^{(i)}(x),e_j\right\rangle,
\end{align*}
where $f^{(i)}\in C(K,\C^N)$ is defined by $f^{(i)}(x)=f(x)e_i$. One verifies
that the net $(B_\a)_{\a\in \mathcal{A}}$ forms a strong right
$(\pi\circ\gamma)\S^{(N)}$-ergodic net. 
Since by hypothesis 
$(\pi\circ\gamma)\S^{(N)}$ is mean ergodic, it follows from
Theorem~\ref{thm:mean-ergodic-koopman} that
$B_\a f^{(i)}$ converges in $C(K,\C^N)$ to a fixed point $h^{(i)}$ of
$(\pi\circ\gamma)\S^{(N)}$ for each $i=1,\dots,N$. 
Defining the function $h_{ij}\in C(K\times\Omega)$ by $h_{ij}(x,\omega):=\sk{\pi(\omega) h^{(i)}(x),e_j}$, we thus obtain
\begin{align*}
  \|A_\a(f\otimes\pi_{ij})&-h_{ij}\|_{C(K\times\Omega)}\\
&=\sup_{(x,\omega)\in K\times \Omega}
\left|\sk{\pi(\omega)\left(\sum_{n=1}^{N_\a}
\lambda_{n,\a}(\pi(\gamma(g_n,\cdot))S_{g_n})f^{(i)}(x)-h^{(i)}(x)\right),e_j}\right|\\
&\le
\left\|\sum_{n=1}^{N_\a}\lambda_{n,\a}(\pi(\gamma(g_n,\cdot))S_{g_n})f^{(i)}-h^{(i)}\right\|_{C(K,\C^N)} 
\to 0,
\end{align*}
since $\pi(\omega)$ is unitary for each $\omega\in\Omega$.
Hence the net $A_\a(f\otimes\pi_{ij})$ converges in $C(K\times\Omega)$
to the fixed point $h_{ij}$ of $\T$. 
\end{proof}

In Theorem~\ref{thm:mean-ergodic-group-extension} we have reduced the
mean ergodicity of $\T$ to the mean ergodicity of the semigroups
$(\pi\circ\gamma)\S^{(N)}$ on $C(K,\C^N)$. 
Theorem~\ref{thm:mean-ergodic-koopman} now provides a useful criterion
for the mean ergodicity of the semigroup
$(\pi\circ\gamma)\S^{(N)}$. 
Hence, combining these two results we obtain the following corollary.


\begin{cor}
  \label{cor:1}
Let the action of a right amenable semigroup $G$ on $K$ be uniquely
ergodic with  invariant measure $\mu$ and suppose that $\Omega$ is a compact group
and $\gamma:G\times K\to\Omega$ a continuous cocycle. 
For the Koopman representations $\S^{(N)}$ and $\S_2^{(N)}$ of $G$ on
$C(K,\C^N)$ and $L^2(K,\C^N,\mu)$, respectively, and
the Koopman representation $\T$ of the skew product action on
$C(K\times\Omega)$ the following assertions are equivalent.
  \begin{enumerate}[(1)]
  \item $\T$ is mean ergodic on $C(K\times\Omega)$.
  \item $\Fix(\pi\circ\gamma)\S_2^{(N)}\subseteq \Fix(\pi\circ\gamma)\S^{(N)}$ for each
    $\pi\in\Rep(\Omega,N)$ and each $N\in\N$.
  \end{enumerate}
\end{cor}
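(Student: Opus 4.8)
The plan is to chain together the two results that have just been established. Corollary~\ref{cor:1} claims the equivalence of (1) mean ergodicity of $\T$ on $C(K\times\Omega)$ and (2') the inclusion $\Fix(\pi\circ\gamma)\S_2^{(N)}\subseteq\Fix(\pi\circ\gamma)\S^{(N)}$ for all $\pi\in\Rep(\Omega,N)$ and all $N\in\N$. By Theorem~\ref{thm:mean-ergodic-group-extension}, statement (1) is equivalent to the mean ergodicity of $(\pi\circ\gamma)\S^{(N)}$ on $C(K,\C^N)$ for every $\pi\in\Rep(\Omega,N)$ and every $N\in\N$. So the only thing left to do is to match, for each fixed $\pi\in\Rep(\Omega,N)$, the mean ergodicity of $(\pi\circ\gamma)\S^{(N)}$ with the fixed-space inclusion $\Fix(\pi\circ\gamma)\S_2^{(N)}\subseteq\Fix(\pi\circ\gamma)\S^{(N)}$.

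First I would check that the hypotheses of Theorem~\ref{thm:mean-ergodic-koopman} are met in the present setting: $G$ is a right amenable semigroup acting on $K$, this action is uniquely ergodic with invariant measure $\mu$, and $\pi\circ\gamma:G\times K\to U(N)$ is a continuous cocycle (this was already observed in the paragraph preceding Theorem~\ref{thm:mean-ergodic-group-extension}: $\pi$ may be taken unitary, and the composition of a continuous cocycle with a continuous homomorphism is again a continuous cocycle; continuity of $\pi\circ\gamma$ follows from continuity of $\gamma$ and of $\pi$). Since $\S^{(N)}$ and $\S_2^{(N)}$ are exactly the Koopman representations associated to this uniquely ergodic action, Theorem~\ref{thm:mean-ergodic-koopman} applies verbatim and gives the equivalence of its statement (1), namely $\Fix(\pi\circ\gamma)\S_2^{(N)}\subseteq\Fix(\pi\circ\gamma)\S^{(N)}$, with its statement (2), namely mean ergodicity of $(\pi\circ\gamma)\S^{(N)}$ on $C(K,\C^N)$.

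Then the proof is just bookkeeping over the index set $\{(\pi,N):N\in\N,\ \pi\in\Rep(\Omega,N)\}$: statement (1) of the corollary holds
$\iff$ $(\pi\circ\gamma)\S^{(N)}$ is mean ergodic for every such $(\pi,N)$ (by Theorem~\ref{thm:mean-ergodic-group-extension})
$\iff$ $\Fix(\pi\circ\gamma)\S_2^{(N)}\subseteq\Fix(\pi\circ\gamma)\S^{(N)}$ for every such $(\pi,N)$ (by Theorem~\ref{thm:mean-ergodic-koopman}, applied once for each fixed $(\pi,N)$)
$\iff$ statement (2) of the corollary holds.

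There is essentially no obstacle here; the work was done in proving Theorem~\ref{thm:mean-ergodic-group-extension} (the harmonic-analysis reduction via the Peter--Weyl/Stone--Weierstrass density of matrix elements) and Theorem~\ref{thm:mean-ergodic-koopman} (the fixed-space argument using Lemmas~\ref{lemma:Fixraum_dualer_Fixraum_cocycles} and~\ref{lemma:dimension-fixraum2}). The only point that requires a word of care is verifying that the unique ergodicity hypothesis of Theorem~\ref{thm:mean-ergodic-koopman} transfers from the action on $K$ to the Koopman data used there --- but that is immediate, since the statement of Theorem~\ref{thm:mean-ergodic-koopman} is phrased precisely in terms of a uniquely ergodic action and its Koopman representations. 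Thus the corollary follows by simply combining the two theorems.
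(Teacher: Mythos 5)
Your proposal is correct and is exactly the paper's argument: the paper itself derives Corollary~\ref{cor:1} by remarking that Theorem~\ref{thm:mean-ergodic-group-extension} reduces mean ergodicity of $\T$ to that of each $(\pi\circ\gamma)\S^{(N)}$, which Theorem~\ref{thm:mean-ergodic-koopman} (applicable since $\pi\circ\gamma$ is a continuous unitary cocycle and the action is uniquely ergodic) characterizes by the fixed-space inclusion. Your verification of the hypotheses is the same bookkeeping the paper leaves implicit.
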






For the proof of Theorem~\ref{thm:furstenberg} below we need a characterization of the
ergodicity of the above skew product action. 
For this purpose we extend Theorem~2.1 of Keynes and Newton
\cite{keynes-newton76} to semigroup actions.  

The trivial $1$-dimensional representation $\omega\mapsto 1$ on $\Omega$ is denoted by $\1$. 
\begin{prop}
  \label{prop:ergodicity}
Let the action of a semitopological semigroup $G$ on $K$ be ergodic
with respect to some invariant measure $\mu$ and suppose that $\Omega$ is a
compact group with Haar measure $\eta$ and $\gamma:G\times K\to\Omega$
a continuous cocycle. 
For the Koopman representations $\S_2^{(N)}$ of $G$ on $L^2(K,\C^N,\mu)$ and
the Koopman representation $\T_2$ of the skew product action on
$L^2(K\times\Omega,\mu\times \eta)$ the following assertions are
equivalent. 
\begin{enumerate}[(1)]
\item $\Fix\T_2=\C\cdot\1$.
\item $\Fix (\pi\circ\gamma)\S_2^{(N)}=\{0\}$ for each $\pi\in
  \Rep(\Omega,N)\setminus \{\1\}$ and each $N\in\N$.
\end{enumerate}
\end{prop}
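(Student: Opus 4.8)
The plan is to use the Peter--Weyl decomposition of $L^2(K\times\Omega,\mu\times\eta)$ as an $\Omega$-module under the right translation action, which commutes with $\T_2$. First I would fix a complete set $\{\pi\}$ of representatives of $\widehat{\Omega}=\bigcup_{N}\Rep(\Omega,N)$, and for each $\pi\in\Rep(\Omega,N)$ with matrix elements $\pi_{ij}$ consider the ``$\pi$-isotypic'' subspace of $L^2(K\times\Omega)$ spanned (over $L^2(K,\mu)$) by the functions $(x,\omega)\mapsto f(x)\,\pi_{ij}(\omega)$. By Peter--Weyl (\cite[Theorem 5.11]{folland95}) the linear span of all matrix elements is dense in $C(\Omega)$ and orthogonal across distinct $\pi$, so $L^2(K\times\Omega,\mu\times\eta)=\bigoplus_{\pi}\bigoplus_{j=1}^{N_\pi} L^2(K,\mu)\otimes\{\pi_{1j},\dots,\pi_{N_\pi j}\}$, and each such summand is $\T_2$-invariant. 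The key computation, already carried out in the proof of Theorem~\ref{thm:mean-ergodic-group-extension}, is that on the span of $\{f_i\otimes\pi_{ij}: i=1,\dots,N\}$ (for fixed $j$) the operator $T_{g,2}$ acts exactly like $(\pi\circ\gamma)(g,\cdot)S_{g,2}$ on $L^2(K,\C^N,\mu)$: writing $F=\sum_i f_i\otimes\pi_{ij}$ and identifying it with $(f_1,\dots,f_N)\in L^2(K,\C^N,\mu)$ one gets $T_{g,2}F\leftrightarrow (\pi\circ\gamma)(g,\cdot)S_{g,2}(f_1,\dots,f_N)$, because $\langle\pi(\omega\gamma(g,x))e_i,e_j\rangle=\sum_k\langle\pi(\gamma(g,x))e_i,e_k\rangle\pi_{kj}(\omega)$.

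Next I would translate the fixed-point condition through this decomposition. A vector $F=\sum_\pi\sum_j F_{\pi,j}$ lies in $\Fix\T_2$ iff each component $F_{\pi,j}$ is fixed, i.e. iff the corresponding element of $L^2(K,\C^{N_\pi},\mu)$ lies in $\Fix(\pi\circ\gamma)\S_2^{(N_\pi)}$. For $\pi=\1$ the summand is just $L^2(K,\mu)\otimes\1\cong L^2(K,\mu)$ with $\T_2$ acting as $\S_2^{(1)}$, whose fixed space is $\C\cdot\1$ by the ergodicity hypothesis on the base. Hence $\Fix\T_2=\C\cdot\1$ holds iff for every nontrivial $\pi\in\Rep(\Omega,N)$ and every $N$ the space $\Fix(\pi\circ\gamma)\S_2^{(N)}$ is zero; this is precisely the equivalence $(1)\Leftrightarrow(2)$. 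Both directions fall out of this dictionary: $(2)\imp(1)$ by assembling components, and $(1)\imp(2)$ by noting that a nonzero $h\in\Fix(\pi\circ\gamma)\S_2^{(N)}$ produces, via $h_{j}(x,\omega):=\langle\pi(\omega)h(x),e_j\rangle$ for any fixed $j$, a nonzero element of $\Fix\T_2$ orthogonal to the constants (orthogonal because $\int_\Omega\pi_{ij}(\omega)\,d\eta(\omega)=0$ for $\pi\neq\1$).

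The main obstacle is making the Peter--Weyl reduction rigorous at the level of the full $L^2$ space rather than just on the dense subalgebra: one must check that the isotypic decomposition is an orthogonal Hilbert-space direct sum, that each piece is closed and $\T_2$-invariant, and that taking fixed vectors commutes with the direct sum (a fixed vector has each isotypic component fixed, which follows because the projection onto each isotypic component is given by convolution against a matrix coefficient and so commutes with $\T_2$ — the right-translation action of $\Omega$ commutes with $\T_2$ since $\T$ only multiplies the $\Omega$-coordinate on the right by $\gamma(g,x)$, whereas the isotypic projections come from \emph{left} translation, or equivalently one checks directly). Once that bookkeeping is in place, the proof is just the computation from Theorem~\ref{thm:mean-ergodic-group-extension} reused verbatim in the $L^2$ setting together with the base ergodicity assumption handling the trivial representation. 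No genuinely new estimate is needed; the content is entirely the representation-theoretic decomposition plus the already-established intertwining identity.
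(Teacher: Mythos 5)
Your proof is correct and is essentially the paper's argument in different packaging: the paper's map $f\mapsto F_j(x,\omega)=\langle\pi(\omega)f(x),e_j\rangle$ and its averaging $F\mapsto h_{\pi,i}(x)=\int_\Omega F(x,\omega)\pi(\omega)^{-1}e_i\,d\eta(\omega)$ are exactly the embedding into, and the projection onto, your $\pi$-isotypic columns, and both proofs close with the orthogonality of nontrivial matrix coefficients to $\1$ together with Peter--Weyl completeness. One small caution on your parenthetical: right translation by a fixed $\omega_0$ does \emph{not} commute with $\T_2$ when $\Omega$ is noncommutative; it is the left translations (and hence the isotypic projections built from them) that commute, as your second formulation correctly states.
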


\begin{proof}
  (1)$\imp$(2): Assume (1) and suppose there exists $0\neq
  f\in\Fix(\pi\circ\gamma)\S_2^{(N)}$ for some $N$-dimensional representation $\pi\in
  \Rep(\Omega,N)\setminus \{\1\}$. 
  Fix an orthonormal basis
  $\{e_1,\dots, e_N\}$ of $\C^N$ and define $F_j\in
  L^2(K\times\Omega,\mu\times \eta)$ by
  $F_j(x,\omega)=\sk{\pi(\omega)f(x),e_j}$ for each
  $j\in\{1,\dots,N\}$. Then for each $j\in\{1,\dots, N\}$, $g\in G$ and $\mu\times \eta$-a.e.
  $(x,\omega)\in K\times\Omega$ we have 
  \begin{align*}
T_{g,2}F_j(x,\omega)&= F_j(gx,\omega\gamma(g,x))
=\sk{\pi(\omega)\pi(\gamma(g,x))f(gx),e_j}\\
&=\sk{\pi(\omega)f(x), e_j}=F_j(x,\omega).
  \end{align*}
Hence  $F_j\in \Fix\T_2$ for all $j\in\{1,\dots,N\}$ and thus each $F_j$ is constant. Since $f\in L^2(K, \C^N,\mu)$, we can
write $f$ as $f=\sum_{i=1}^N f_i e_i$ for $f_i\in L^2(K,\mu)$. By Fubini's Theorem we then obtain
\begin{align*}
  \int_{K\times\Omega}F_j(x,\omega)\; d(\mu\times \eta)(x,\omega)
&=
\sum_{i=1}^N\int_{K\times\Omega}f_i(x)\sk{\pi(\omega)e_i,e_j}\;d(\mu\times
\eta)(x,\omega)\\
&=\sum_{i=1}^N\int_K f_i(x)\; d\mu(x)\underbrace{\int_\Omega \pi_{ij}(\omega)\; d\eta(\omega)}_{\sk{\pi_{ij},\1}_{L^2(\Omega,\eta)}}=0
\end{align*}
for all $j\in\{1,\dots,N\}$, since $\pi\neq\1$ (see \cite[Theorem~7.2.1]{deitmar09}). Hence
$F_j=0$ for all $j\in\{1,\dots,N\}$ and thus $f=0$, which contradicts
the assumption.

(2)$\imp$(1): Let $F\in\Fix \T_2$, $N\in\N$, $\pi\in\Rep(\Omega,N)$, $i\in\{1,\dots,N\}$  and define 
$$h_{\pi,i}(x)=\int_\Omega
F(x,\omega)\pi(\omega)^{-1}e_i\;d\eta(\omega).$$ 
Then for every $g\in G$ and $\mu$-a.e. $x\in K$ we have 
\begin{align*}
  (\pi\circ\gamma)(g,\cdot)S_{g,2}
  h_{\pi,i}(x)&=\pi(\gamma(g,x))\int_\Omega
F(gx,\omega)\pi(\omega)^{-1}e_i\;d\eta(\omega)\\
&=\pi(\gamma(g,x))\int_\Omega
F(gx,\omega\gamma(g,x))\pi(\gamma(g,x))^{-1}\pi(\omega)^{-1}e_i\;d\eta(\omega)\\
&=\int_\Omega F(x,\omega)\pi(\omega)^{-1}e_i\;d\eta(\omega)=h_{\pi,i}(x)
\end{align*}
by the invariance of the Haar measure $\eta$.
Hence $h_{\pi,i}\in\Fix(\pi\circ\gamma)\S_2=\{0\}$ and thus
\begin{align*}
  0&=\int_\Omega \sk{F(x,\omega)\pi(\omega)^{-1}e_i,e_j}d\eta(\omega)\\
&=\int_\Omega F(x,\omega)\ol{\pi_{ji}(\omega)}d\eta(\omega)\\
&=\sk{F(x,\cdot),\pi_{ji}}_{L^2(\Omega,\eta)}
\end{align*}
for each $\pi\in\Rep(\Omega,N)\setminus\{\1\}$, each
$i,j\in\{1,\dots,N\}$ and $\mu$-a.e. $x\in K$. 
Hence $F(x,\cdot)$ is constant for $\mu$-almost every
$x\in K$ and thus there exists $f\in L^2(K,\mu)$ with $F=f\otimes
\1$. Since $F\in\Fix\T_2$ it follows that $f\in\Fix\S_2^{(1)}$ and by
ergodicity $f$ and consequently $F$ is constant. 
\end{proof}

 The following result is due to Furstenberg
  \cite[Proposition~3.10]{furstenberg81} in the case of an $\N$-action
  on a compact  metric space $K$. 
Our proof does not use the Pointwise Ergodic Theorem and so-called generic points.

\begin{thm}
  \label{thm:furstenberg}
Let the action of a right amenable semigroup $G$ on $K$ be uniquely
ergodic with invariant measure $\mu$ and suppose that $\Omega$ is a compact group
with Haar measure $\eta$ and $\gamma:G\times K\to\Omega$ a continuous cocycle.
If the skew product action 
$(g,(x,\omega))\mapsto (gx,\omega\gamma(g,x))$ of $G$ on $K\times\Omega$ is
ergodic with respect to the product measure $\mu\times \eta$, then it
is uniquely ergodic. 
\end{thm}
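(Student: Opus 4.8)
The plan is to combine the machinery developed in this paper with the classical fact that a mean ergodic Markov semigroup on $C(L)$ whose fixed space consists only of the constants is uniquely ergodic (Corollary~\ref{cor:uniquely-ergodic-m-erg}). Concretely, write $L := K\times\Omega$ and let $\T$ be the Koopman representation of the skew product action on $C(L)$. Since $G$ is right amenable, $\T$ is a right amenable semigroup of Markov operators on $C(L)$, and the product measure $\mu\times\eta$ is $\T$-invariant, so $0<\mu\times\eta\in\Fix\T'$. Hence by Proposition~\ref{prop:uniquely-ergodic-m-erg} it suffices to show that $\T$ is mean ergodic on $C(L)$ and that $\Fix\T=\C\cdot\1$.

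First I would establish mean ergodicity of $\T$. By Theorem~\ref{thm:mean-ergodic-group-extension}, $\T$ is mean ergodic on $C(K\times\Omega)$ if and only if $(\pi\circ\gamma)\S^{(N)}$ is mean ergodic on $C(K,\C^N)$ for every $\pi\in\Rep(\Omega,N)$ and every $N\in\N$. Fix such a $\pi$. If $\pi=\1$ is the trivial one-dimensional representation, then $(\pi\circ\gamma)\S^{(1)}=\S^{(1)}$ is mean ergodic because the action of $G$ on $K$ is uniquely ergodic. If $\pi\neq\1$, then by the ergodicity hypothesis on the skew product and Proposition~\ref{prop:ergodicity} we have $\Fix(\pi\circ\gamma)\S_2^{(N)}=\{0\}$; in particular $\Fix(\pi\circ\gamma)\S_2^{(N)}\subseteq\Fix(\pi\circ\gamma)\S^{(N)}$ trivially, so Theorem~\ref{thm:mean-ergodic-koopman} gives mean ergodicity of $(\pi\circ\gamma)\S^{(N)}$ on $C(K,\C^N)$ (with mean ergodic projection $P_\gamma=0$, cf.\ Remark~\ref{rem:fixraum}). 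Thus $\T$ is mean ergodic on $C(L)$.

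Next I would show $\Fix\T=\C\cdot\1$. Since $\Fix\T\subseteq\Fix\T_2$ (the continuous fixed functions are in particular $L^2$-fixed functions), and the ergodicity hypothesis says $\Fix\T_2=\C\cdot\1$, we get $\Fix\T\subseteq\C\cdot\1$; the reverse inclusion is clear as $\1\in\Fix\T$. Hence $\Fix\T=\C\cdot\1$. Now apply the "(2)$\imp$(1)" direction of Proposition~\ref{prop:uniquely-ergodic-m-erg}, using the invariant probability measure $\mu\times\eta\in\Fix\T'$: we conclude that $\T$ is uniquely ergodic, i.e.\ $\Fix\T'=\C\cdot(\mu\times\eta)$, which is exactly the statement that the skew product action admits a unique invariant probability measure.

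The main subtlety is not a single hard estimate but the orchestration: one must be careful that the reduction via Theorem~\ref{thm:mean-ergodic-group-extension} requires mean ergodicity of $(\pi\circ\gamma)\S^{(N)}$ for \emph{all} $N$ and all $\pi\in\Rep(\Omega,N)$ simultaneously, and that for the non-trivial representations this is supplied cleanly by Proposition~\ref{prop:ergodicity} together with Theorem~\ref{thm:mean-ergodic-koopman}, while the trivial representation is handled separately by the unique ergodicity of the base. The point worth emphasizing is that ergodicity of the skew product feeds into \emph{both} halves of the argument: it gives the vanishing of $\Fix(\pi\circ\gamma)\S_2^{(N)}$ for $\pi\neq\1$ (hence mean ergodicity of $\T$), and it gives $\Fix\T_2=\C\cdot\1$ (hence $\Fix\T=\C\cdot\1$). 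Once both facts are in hand, Proposition~\ref{prop:uniquely-ergodic-m-erg} closes the argument without any appeal to pointwise ergodic theorems or generic points.
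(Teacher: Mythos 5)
Your reduction to mean ergodicity of $\T$ is correct and is exactly the paper's route: unique ergodicity of the base handles $\pi=\1$, while for $\pi\neq\1$ Proposition~\ref{prop:ergodicity} gives $\Fix(\pi\circ\gamma)\S_2^{(N)}=\{0\}$, so Theorem~\ref{thm:mean-ergodic-koopman} (i.e.\ Corollary~\ref{cor:1}) applies. The final appeal to Proposition~\ref{prop:uniquely-ergodic-m-erg} with $0<\mu\times\eta\in\Fix\T'$ is also fine. The gap is in your verification that $\Fix\T=\C\cdot\1$. The inclusion ``$\Fix\T\subseteq\Fix\T_2$'' only says that the image of a continuous invariant function under the canonical map $C(K\times\Omega)\to L^2(K\times\Omega,\mu\times\eta)$ lies in $\Fix\T_2=\C\cdot\1$, i.e.\ that $F$ equals a constant $\mu\times\eta$-\emph{almost everywhere}. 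To conclude $F\in\C\cdot\1$ in $C(K\times\Omega)$ you would need $\supp(\mu\times\eta)=K\times\Omega$, equivalently $\supp\mu=K$, and unique ergodicity does not provide that (think of a uniquely ergodic action whose invariant measure is supported on a proper closed invariant subset; the canonical map $C\to L^2$ is then not injective). This is precisely why Remark~\ref{rem:fixraum} states the implication $\Fix\gamma\S_2=\{0\}\Rightarrow\Fix\gamma\S=\{0\}$ as a ``notable fact'' needing the mean ergodic projection, rather than as the trivial consequence of an a.e.\ identification.

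The paper closes exactly this step by staying in $C(K\times\Omega)$: it takes a strong right $\T$-ergodic net $A_\a=\sum_n\lambda_{n,\a}T_{g_n}$, whose strong limit $P$ satisfies $\ran P=\Fix\T$, and evaluates $P$ on the dense subspace spanned by the functions $f\otimes\pi_{ij}$. Writing $A_\a(f\otimes\pi_{ij})(x,\omega)=\sk{\pi(\omega)B^\pi_\a f^{(i)}(x),e_j}$ with $B^\pi_\a=\sum_n\lambda_{n,\a}\pi(\gamma(g_n,\cdot))S_{g_n}$, one gets $B^\pi_\a f^{(i)}\to 0$ \emph{uniformly on $K$} for $\pi\neq\1$ (since the mean ergodic projection of $(\pi\circ\gamma)\S^{(N)}$ is $0$ by Remark~\ref{rem:fixraum}) and $B^\1_\a f^{(1)}\to c\cdot\1$ for $\pi=\1$ (by Proposition~\ref{prop:uniquely-ergodic-m-erg}, which gives $\Fix\S^{(1)}=\C\cdot\1$ at the level of $C(K)$, again not via the $L^2$ identification). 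Hence $\ran P=\C\cdot\1$. You should replace your a.e.\ argument by this computation, or otherwise justify why every continuous $\T$-invariant function is constant off the support of $\mu\times\eta$.
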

\begin{proof}
  The ergodicity of the skew product action is equivalent to $\Fix\T_2=\C\cdot\1$ and thus
  $\Fix(\pi\circ\gamma)\S_2^{(N)}=\{0\}$ for each
  $\pi\in\Rep(\Omega,N)\setminus \{\1\}$ and $N\in\N$ by
  Proposition~\ref{prop:ergodicity}. 
 Since $\Fix\S_2^{(1)}=\C\cdot \1$ on $L^2(K,\mu)$ by ergodicity this yields
 $\Fix(\pi\circ\gamma)\S_2^{(N)}\subseteq \Fix(\pi\circ\gamma)\S^{(N)}$ for each  
  $\pi\in\Rep(\Omega,N)$ and $N\in\N$. 
Hence $\T$ is mean ergodic by Corollary
  \ref{cor:1}.
To show that $\T$ is uniquely ergodic by
  Proposition~\ref{prop:uniquely-ergodic-m-erg} it thus remains to verify that $\Fix\T=\C\cdot\1$.
Take a strong right $\T$-ergodic net $(A_\a)_{\a\in\mathcal{A}}$ on
$C(K\times\Omega)$ with  $A_\a=\sum_{n=1}^{N_\a}\lambda_{n,\a}T_{g_n}\in\co\T$
for all $\a\in\mathcal{A}$. 
Since $\T$ is mean ergodic the net $(A_\a)$ converges strongly to a projection $P$ with $\ran P=\Fix\T$.
By density of the linear span of the matrix elements in $C(\Omega)$ it thus suffices to show that $P(f\otimes\pi_{ij})$ is constant for each $f\in C(K)$ and each matrix element $\pi_{ij}$.

So take $f\in C(K)$, $\pi\in\Rep(\Omega,N)$ and fix an orthonormal
basis $\{e_1,\dots,e_N\}$ of $\C^N$. 
For each $(x,\omega)\in K\times\Omega$ and $i,j\in\{1,\dots, N\}$ we have
\begin{align*}
  A_\a(f\otimes\pi_{ij})(x,\omega)&=\sum_{n=1}^{N_\a}\lambda_{n,\a}(f\otimes\pi_{ij})(g_nx,\omega\gamma(g_n,x))\\
&=\sum_{n=1}^{N_\a}\lambda_{n,\a}f(g_n x)\sk{\pi(\omega)\pi(\gamma(g_n,x))e_i,e_j}\\
&=\left\langle\pi(\omega) \underbrace{\sum_{n=1}^{N_\a}\lambda_{n,\a}(\pi(\gamma(g_n,\cdot))S_{g_n})}_{=:B^{\pi}_\a}f^{(i)}(x),e_j\right\rangle,
\end{align*}
where $f^{(i)}\in C(K,\C^N)$ is defined by $f^{(i)}(x)=f(x)e_i$. 
One verifies
that the net $(B^{\pi}_\a)_{\a\in \mathcal{A}}$ forms a strong right
$(\pi\circ\gamma)\S^{(N)}$-ergodic net. 
From Remark~\ref{rem:fixraum} we deduce $\Fix(\pi\circ\gamma)\S^{(N)}=\{0\}$ for each $\pi\in\Rep(\Omega,N)\setminus\{\1\}$ and $N\in\N$, hence $B^{\pi}_\a f^{(i)}\to 0$ in $C(K,\C^N)$ for each $\pi\in\Rep(\Omega,N)\setminus\{\1\}$ and $N\in\N$.
For $\pi=\1$ the net $B_\a f^{(1)}$ converges to a constant $c\cdot\1$ since $\Fix\S^{(1)}=\C\cdot\1$ by unique ergodicity.
Hence if $\pi\neq\1$ we obtain
\begin{align*}
 \| A_\a(f\otimes\pi_{ij})\|_{C(K\times\Omega)}&=\sup_{(x,\omega)\in K\times \Omega}
\left|\sk{\pi(\omega)B^{\pi}_\a f^{(i)}(x),e_j}\right|\\
&\le
\left\|B^{\pi}_\a f^{(i)}\right\|_{C(K,\C^N)} 
\to 0,
\end{align*}
since $\pi(\omega)$ is unitary for each $\omega\in\Omega$.
This yields $P(f\otimes\pi_{ij})=0$ if $\pi\neq\1$ and the same calculation shows
 $P(f\otimes\1)=c\cdot\1$ if $\pi=\1$.
Hence $\Fix\T=\ran P=\C\cdot\1$, which finishes the proof.
\end{proof}

\textbf{Acknowledgement.} The author is grateful to Rainer Nagel and
Pavel Zorin-Kranich for valuable comments and interesting discussions.


\bibliographystyle{siam}
\bibliographystyle{amsplain}

\end{document}